\newcommand{\quash}[1]{}
\numberwithin{equation}{section}
\newtheorem{defin}{Definition}[section]
\newtheorem{prop}{Proposition}[section]
\newtheorem{nt}{Remark}[section]
\newtheorem{Th}{Theorem}[section]
\newtheorem{lemma}{Lemma}[section]
\newtheorem{defin-prop}{Definition-proposition}[section]
\newfont{\ssdbl}{msbm8}
\newfont{\sdbl}{msbm9}
\newfont{\dbl}{msbm10 at 12pt}
\newcommand{\oo}{{\cal O}}
\newcommand{\g}{{\cal G}}
\newcommand{\Hom}{\mathop {\rm Hom}}
\newcommand{\Homb}{{\Hom\nolimits^{\blacklozenge}}}
\newcommand{\tr}{\mathop {\rm tr}}
\newcommand{\dz}{\mathbb{Z}}
\newcommand{\dc}{\mathbb{C}}
\newcommand{\dr}{\mathbb{R}}
\newcommand{\Z}{\dz}
\newcommand{\sdc}{\mathbb{C}}
\newcommand{\Ker}{{\rm Ker}\:}
\newcommand{\lrto}{\longrightarrow}
\newcommand{\dn}{\mathbb{N}}
\def\Z{{\mathbb Z}}
\def\Q{{\mathbb Q}}
\def\N{{\mathbb N}}
\def\C{{\mathbb C}}
\newcommand{\Lie}{\mathop {\rm Lie}}
\newcommand{\ve}{\varepsilon}
\newcommand{\h}{{{\mathcal{H}}}}
\newcommand{\DS}{{\mathop{\rm Diff}}^+_{\rm hol}(S^1)}
\newcommand{\WDS}{\widehat{\DS}}
\newcommand{\G}{{\mathcal G}}
\newcommand{\xlrto}{\xlongrightarrow}
\begin{document}

\author{
Denis V. Osipov
}

\title{Analytic diffeomorphisms of the circle and topological Riemann-Roch theorem for circle fibrations  \thanks{The work was supported by the HSE University Basic Research Program.}}
\date{}

\maketitle

\begin{abstract}
We consider the group $\mathcal G$ which is the semidirect product of the group of analytic functions with values in ${\mathbb C}^*$ on the circle and the group of analytic diffeomorphisms of the circle that preserve the orientation.
Then we construct the central extensions of the group $\mathcal G$ by the group ${\mathbb C}^*$. The first central extension,  so-called the determinant central extension, is constructed by means of determinants of linear operators acting in infinite-dimensional locally convex topological $\C$-vector spaces. Other central extensions are constructed by $\cup$-products of group $1$-cocycles with the application to them the map related with algebraic $K$-theory.
We prove in the second cohomology group, i.e. modulo of a group $2$-coboundary, the equality of the $12$th power of the $2$-cocycle constructed by the first central extension and the product of integer powers of the $2$-cocycles constructed above by means of \linebreak $\cup$-products (in multiplicative notation).
As an application of this result we obtain a new topological Riemann-Roch theorem for a complex line bundle $L$ on a smooth manifold $M$,
where $\pi :M \to B$ is a fibration in oriented circles. More precisely, we prove that in the group $H^3(B, {\mathbb Z})$ the element $12 \, [ {\mathcal Det}  (L)]$ is equal to the element  $6 \, \pi_* ( c_1(L)  \cup c_1(L))$,
where $[{\mathcal Det}  (L)]$ is the class of the determinant gerbe on $B$ constructed by $L$ and the determinant central extension.

\end{abstract}

\tableofcontents

\section{Introduction}
\subsection{Infinite-dimensional Lie groups}

In this paper we consider the group
$$
\g =  \h^*  \rtimes \DS  \, \mbox{,}
$$
where $\h^*$ is the group of analytic functions on the circle $S^1$ with values in $\C^*$, where $\C^* = \C \setminus 0$, and $\DS$ is the group of analytic diffeomorphisms of $S^1$ that preserve the orientation.
Clearly, the group $\DS$ acts on the group $\h^*$.

We will consider also
$$S^1  = \{z \in \mathbb{C} \, \mid  \,  |z|  =1   \} $$
inside of the Riemann sphere $\widehat{\C} = \C \cup \infty$.

The groups $\h^*$, $\DS$ and $\g$ are infinite-dimensional smooth Lie groups modelled on locally convex topological vector spaces defined over the fields $\dr$ or $\C$, see Section~\ref{Expl_descr} below.
More exactly, these locally convex topological vector spaces are so-called Silva spaces, where a Silva space is a countable inductive limit (in the category of locally convex topological vector spaces) of Banach spaces with compact inclusion maps between Banach spaces which have successive indices in inductive system, see more in Remark~\ref{Silva} below.  (Recall that a linear map between Banach spaces is compact when it maps bounded subsets
 to relatively compact subsets, i.e. to subsets with compact closure.)

\subsection{First central extension}

We construct the determinant central extension of the smooth Lie group
$$
 1\lrto \C^*  \lrto \widetilde{\g}  \lrto \g \lrto 1  \, \mbox{.}
$$
Moreover, there is a smooth (non-group) section $\g  \to \widetilde{\g}$ of the map ${\widetilde{\g}  \to \g }$. Therefore this central extension can be described by a smooth group $2$-cocycle on $\g$ with coefficients in $\C^*$.

 In particularly, consider the smooth Lie subgroup $\g^0 \subset \g$ (which is the identity component)
$$
\g^0 =  \h^*_0  \rtimes \DS  \, \mbox{,}
$$
where $\h^*_0$ is the subgroup of the group $\h^*$ that consists of functions with zero winding number.
Then the determinant central extension restricted to the subgroup $\g^0$ is given by the following explicit smooth $2$-cocycle $D$:
$$
D(g_1, g_2) = \det\left((g_1)_{++} (g_2)_{++} ((g_1g_2)_{++})^{-1}\right)  \, \in \, \C^*  \, \mbox{,} \qquad g_1 \, \mbox{,} g_2 \in \g^0  \, \mbox{,}
$$
 and this formula is explained as follows. The group $\g$ naturally acts on the  locally convex topological $\C$-vector space $\h$ of analytic functions on $S^1$ with values in $\C$.
There is a natural decomposition $\h = \h_- \oplus \h_+$, where $\h_+$ is the space of functions from $\h$ that can be extended to holomorphic functions inside $S^1$ on $\C$,
and $\h_-$ is  the space of functions from $\h$ that can be extended to holomorphic functions outside $S^1$ on $\widehat{\C}$ and vanish at $z = \infty$. The spaces $\h_+$, $\h_-$, $\h$ are Silva spaces. For any $g \in \G$
consider the linear operator $g_+ = \mathop{\rm pr}_+ \circ \, g  |_{\h_+}: \h_+  \to \h_+ $, where $\mathop{\rm pr}_+$ is the projection from $\h$ to $\h_+$ with respect to the above decomposition. Then the determinant in the expression for the $2$-cocycle $D$ is well-defined since it is given by a series
$$
\det(1 + T) = \sum_{l \ge 0} \tr(\Lambda^l T)    \, \mbox{,}
$$
where the linear operator $T : \h_+ \to \h_+$ has the trace, see Section~\ref{big-trace} below.

The explicit expression for the $2$-cocycle $D$ is the same as the expression for the $2$-cocycle on the open subset of the group $GL_{\rm res}$, where the group $GL_{\rm res}$ is a special group which acts on the Hilbert space
$L^2(S^1, \C)$, see~\cite[Prop.~6.6.4]{PS}. The group ${\rm Diff}^+ (S^1)$ of orientation preserving smooth diffeomorphisms of $S^1$ is embedded into the group $GL_{\rm res}$, but this emebedding is not continuous (this emebdding induces the discrete topology on the group ${\rm Diff}^+ (S^1)$), see~\cite[\S~6.8]{PS}. Therefore this gives the difficulties to work with the central extension of the group ${\rm Diff}^+ (S^1)$ induced by the central extension of the group $GL_{\rm res}$ by the group $\C^*$, constructed in~\cite[\S~6.6]{PS}.

Therefore we develop an approach through the Lie group $\DS$ of analytic diffeomorphisms and the Lie group $\h^*$ of $\C^*$-valued analytic functions on $S^1$, which naturally act on the Silva space $\h$.
More generally, we construct also
groups ${\mathop{ \rm GL}}^0_{\rm res}(\h)$, ${\mathop{ \rm GL}}^+_{\rm res}(\h)$ and ${\mathop{ \rm GL}}^-_{\rm res}(\h) $ such that
the  group ${\mathop{ \rm GL}}^0_{\rm res}(\h)$ is a subgroup of groups ${\mathop{ \rm GL}}^+_{\rm res}(\h)$ and ${\mathop{ \rm GL}}^-_{\rm res}(\h) $, and the last two groups    are versions of the group $GL^0_{\rm res}$ (which is the identity component of the group $GL_{\rm res}$). But  the groups  ${\mathop{ \rm GL}}^+_{\rm res}(\h)$ and ${\mathop{ \rm GL}}^-_{\rm res}(\h) $ naturally act on the Silva space $\h$ (we don't use the Hilbert space $L^2(S^1, \C)$), see Section~\ref{group-gl} below. Then we construct the central extensions of these groups by the group $\C^*$ such that the determinant central extension of the group $\g^0$ is the pullback of these central extensions under the natural embedding of the group $\g^0$ into  the group ${\mathop{ \rm GL}}^0_{\rm res}(\h)$.

Another advantage of our approach is that we obtain the natural smooth $2$-cocycle $D$ on the group $\g^0$ with values in the group $\C^*$ which describes the determinant central extension.
For this goal we use the decomposition of elements of the group $\h_0^*$ into the product of ``$+$'' and ``$-$'' parts, and similarly use the decomposition for the elements of the group $\DS$, what is called the conformal welding and is based on~\cite{K},
see Sections~\ref{funct} and~\ref{weld}   and also Theorem~\ref{th-block}  below.

We have the decomposition into a semi-direct product $\g = \g^0 \rtimes \dz$, and the determinant central extension of $\g^0$ given by the $2$-cocycle $D$ can be uniquely extended to the central extension of the Lie group $\g$, which we also call the determinant central extension, see Section~\ref{ext-det} and Theorem~\ref{Th5} below. Besides, from the construction of the extended central extension it follows that there is a non-group smooth section $\g  \to \widetilde{\g}$
which extends the smooth section over $\g^0$ that was used to construct the $2$-cocycle $D$.

\subsection{Formal analog and linear operators with the trace}

There is the formal, purely algebraic,  analog of the determinant central extension of the Lie group $\g$, see~\cite{O1,O2}.
In this case the group $\h^*$ is replaced by the group of invertible functions on the formal punctured disk, and the group $\DS$ is replaced by the group of automorphisms of the formal punctured disc.

More exactly, for every commutative ring $A$ we consider the group of invertible elements $A((t))^*$ of the $A$-algebra of the  Laurent series $A((t))= A[[t]][t^{-1}]$, and the group ${\mathcal A}ut^{\rm c, alg}({\mathcal L})(A)$ of the continuous $A$-algebra automorphisms of $A((t))$, where we consider the $t$-adic topology on $A((t))$.   Now we have the following formal analog of the above Lie group $\g$:
$$
\g(A)= A((t))^* \rtimes {\mathcal A}ut^{\rm c, alg}({\mathcal L})(A)  \, \mbox{.}
$$
(More precisely, we have to speak on the functor $A \mapsto \g(A)$ from the category of commutative rings to the category of groups, and this functor is represented by the group ind-scheme, which is the formal analog of the Lie group $\g$.)

Now the construction of the determinant central extension of the Lie group $\g$ is analogous to the construction in the formal case. But in the formal case the formula for the analog of the group $2$-cocycle $D$ uses the determinant of the operator from $A[[t]]$ to $A[[t]]$ of type $1 + \widetilde{T}$, where there is $l > 0$ such that $\widetilde{T} |_{t^l A[[t]]} = 0$.

In our case of the smooth Lie group $\g$ we have in this place the absolutely convergent infinite series (see the formula for $\det(1 +T)$ in the previous section). Besides,
the space of continuous linear operators is given by a formula
$$
\Hom (\h_+, \h_+) = \varprojlim_{n \in \mathbb{N}} \varinjlim_{m \in \mathbb{N}}  \Hom(C_n, C_m)     \, \mbox{,} \qquad   \mbox{where} \quad \h_+= \varinjlim_{n \in \mathbb{N}} C_{n}
$$
as the Silva space with the Banach spaces $C_n$, see Remarks~\ref{Groth} and~\ref{Groth2} below.
Now the linear operators from a subspace
$$
{\Homb}  ({ \h_+}, { \h_+})  = \varinjlim_{m \in \mathbb{N}}  \varprojlim_{n \in \mathbb{N}}  \Hom(C_n, C_m)   \; \,
\subset \; \,
 {\Hom}  ({\h_+}, {\h_+})
$$
have the trace. (We just  interchanged  the order of projective and inductive limits $\varprojlim$
and  $\varinjlim$ in the definition of  ${\Homb}(\h_+, \h_+) $ with respect to $\Hom(\h_+, \h_+)$.)

More precisely,  the elements of ${\Homb}  ({ \h_+}, { \h_+})$ are in one-to-one correspondence with  the holomorphic functions $f(z,w)$ defined on an open subset $W_1 \times W_2  \subset  \C \times \widehat{\C}$, where the open set $W_1$ (which depends on the function $f(z,w)$) contains the unit closed disk and $W_2$ is the complement in $\widehat{\C}$ to the closed unit disk, such that $f(z,w)$ vanishes on $W_1 \times \infty$, see
Propositions~\ref{holom} and~\ref{tilde-holom} below.
 The linear operator from ${\Homb}  ({ \h_+}, { \h_+})$  is given as
$$
h(z)  \longmapsto  \oint_{| w | = 1+ \ve}   f(z, w) h(w) dw  \, \mbox{,}
$$
where $\ve > 0$ is small enough.
Then the trace of this linear operator is defined as
$
\oint_{| v | = 1 + \sigma } f (v,v)  dv
$, where $\sigma > 0$ is small enough, see Section~\ref{trace} below.

Note that the determinant central extension constructed from a group acting on a locally convex topological $\C$-vector space $H$, which is not a Hilbert space,  was considered also in~\cite[\S~1.II)]{ADKP}. But this differs from our case, because
$H = H_- \oplus H_+ $, where  $H_+$ is a Silva space and $H_-$ is not a Silva space, but it is topological (or continuous) dual to $H_+$. Besides, the group $\DS$ we are interested in does not act on $H$.

\subsection{Other central extensions}
Another central extensions of the Lie group $\g$ by the Lie group $\C^*$ can be obtained by the following explicit procedure.

Let $C^{\infty}(S^1, \C^*)$ be the group of smooth functions from
$S^1 $  to $\C^*$.
There is a bumultiplicative and antisymmetric pairing
$$
{\mathbb T} \, : \,   C^{\infty}(S^1, \C^*) \times C^{\infty}(S^1, \C^*) \lrto \C^*  \, \mbox{,}  \qquad
{\mathbb T} (f,g) = \exp \left( \frac{1}{2 \pi i} \int_{x_0}^{x_0}  \log f \,  \frac{dg}{g} \right) g(x_0)^{-\nu(f)}  \, \mbox{,}
$$
where the integer
$\nu(f) = \frac{1}{2 \pi i} \oint_{S^1} \frac{df}{f}$ is the winding number,
$x_0$ is any point on $S^1$, $\log f $ is any branch of the logarithm on $S^1 \setminus x_0$, and the integral is on $S^1$ from $x_0$ to $x_0$ in the counterclockwise direction.
The pairing $\mathbb T$ is invariant under the diagonal action of the group ${\mathop{\rm Diff}}^+(S^1)$ of orientation preserving smooth diffeomorphisms  of $S^1$ on  ${C^{\infty}(S^1, \C^*) \times C^{\infty}(S^1, \C^*)}$.

The pairing $\mathbb T$ was constructed by A.~A.~Beilinson and P.~Deligne, it is related with the $\cup$-product in Deligne cohomology, it factors through the Milnor $K_2$-group
$K_2^M(C^{\infty}(S^1, \C^*))$, see Sections~\ref{bimult} and~\ref{Del-coh}   below. There is the formal analog of pairing~$\mathbb T$, called the  Contou-Carr\`{e}re symbol, see~\cite[\S~2.9]{D2},   \cite{CC1}, \cite[\S~2]{OZ}.

The natural homomorphism of Lie groups $\G \to \DS$  gives the action of $\G$ on~$\h^*$. Let $\lambda_1$ and $\lambda_2$ be any group smooth $1$-cocycles on $\g$ with coefficients in the
$\g$-module $\h^*$. We construct the following smooth $2$-cocycle on $\g$ with coefficients in $\C^*$ (see more in Section~\ref{cup-sect}):
$$
\langle \lambda_1, \lambda_2   \rangle  = {\mathbb T} \circ (\lambda_1 \cup \lambda_2)  \, \mbox{.}
$$

 There are distinct smooth $1$-cocycles $\Lambda$ and $\Omega$ on $\g$ with coefficients in $\h^*$:
$$
\Lambda((d,f))= d   \qquad \mbox{and}  \qquad \Omega((d,f)) = (f^{\circ -1})'   \, \mbox{,}
$$
where $(d,f) \in \g = \h^* \rtimes \DS$,
$f^{\circ -1} $ denotes the diffeomorphism which  is   inverse to the diffeomorphism  $f$, and ${}'$  is the derivative with respect to the complex variable $z$.

Thus,  we have the following  smooth $2$-cocycles on the group $\g$ with coefficients in the group $\C^*$ (where $\g$ acts trivially on $\C^*$):
$$
\langle \Lambda, \Lambda  \rangle  \, \mbox{,} \qquad
\langle   \Lambda , \Omega                 \rangle  \, \mbox{,} \qquad
\langle   \Omega, \Omega                                \rangle     \, \mbox{.}
$$

By the standard procedure, each of these $2$-cocycles defines the central extension of the Lie group $\g$ by the Lie group $\C^*$ with the property that    this central extension admits a (non-group) smooth section from $\g$, see Remark~\ref{Rem-coh}.

\subsection{Comparison of central extensions and the local Deligne-Riemann-Roch isomorphism}
Consider the group $H_{\rm sm}^2(\g, \C^*)$ that classifies the equivalence classes of central extensions of the Lie group $\g$  by the Lie group $\C^*$ that admit smooth, in general non-group, sections from  $\g$. We will use the multiplicative notation for the group law in  $H_{\rm sm}^2(\g, \C^*)$.

In Theorem~\ref{Th-eq}   we prove that in the group $H_{\rm sm}^2(\g, \C^*)$  the class of
the $12$th power of the determinant central extension  is equal to the class of the  central extension given by the following $2$-cocycle:
$$
  \langle \Lambda, \Lambda \rangle^6 \cdot    \langle \Lambda, \Omega \rangle^{-6} \cdot \langle \Omega, \Omega \rangle  \, \mbox{.}
$$

The formal analog (after restriction to $\Q$-algebras $A$)  of Theorem~\ref{Th-eq} was proved in~\cite[Theorem~7]{O2}. This is the local Deligne--Riemann--Roch isomorphism for line bundles, where the original Deligne--Riemann--Roch isomorphism is stated as follows (see~\cite{D1}  and also explanations in~\cite[\S~1.1]{O2}).

Let $p : X \to S$ be  a  family of smooth projective curves over a scheme $S$ (i.e. a smooth proper morphism of relative dimension~$1$ between schemes $X$ and $S$) with connected geometric fibres.
Let $\omega = \Omega^1_{X/S}$ be an invertible sheaf of relative differential $1$-forms on $X$, and $\mathcal L$ be any invertible sheaf on $X$.
The Deligne--Riemann--Roch isomorphism is an isomorphism of invertible sheaves  on $S$:
$$
(\det R \, p_* {\mathcal L})^{\otimes 12} \simeq \langle {\mathcal L}, {\mathcal L } \rangle^{\otimes 6}  \otimes \langle {\mathcal L}, \omega  \rangle^{\otimes -6} \otimes \langle  \omega, \omega \rangle  \mbox{,}
$$
where for any invertible sheaves ${\mathcal L}_1$ and ${\mathcal L}_2$ on $X$ the Deligne bracket $\langle {\mathcal L}_1 , {\mathcal L}_2 \rangle$ is an invertible sheaf on $S$.
This isomorphism is functorial in $\mathcal L$ and compatible with base change.

\subsection{The topological  Riemann-Roch theorem}
The Theorem~\ref{Th-eq} on equivalence of central extensions of $\G$ by $\C^*$ has the following application.

Let $\pi : M \to B$ be a fibration in oriented circles (see more in Sections~\ref{Gysin} and~\ref{tRR}), where $M$ and $B$
 are finite-dimensional smooth manifolds.
 Let $L$ be a complex line bundle on  $M$.
Then $L$ can be considered as a locally trivial fibration over  $B$ with  transition functions with values in the group $\G$. These transition functions define a principal $\g$-bundle ${\mathcal P}_L$ over $B$.
This principal bundle and the determinant central extension of $\g$ define the determinant gerbe $ {\mathcal D}et (L)$  over $B$ which is the lifting gerbe, or the gerbe  of local lifts of the principal $\g$-bundle ${\mathcal P}_L$ to  principal $\widetilde{\G}$-bundles,
see more in Section~\ref{sec-last}. Denote the class of this gerbe in $H^3(B, \dz)$ by $[{\mathcal D}et (L) ]$. In particularly,  the element $[{\mathcal D}et (L) ]$ is an obstruction to find a principal $\widetilde{\G}$-bundle
$\widetilde{{\mathcal P}_L}$ over $B$ such that the principal $\g$-bundles $\widetilde{{\mathcal P}_L} / {\C}^*$ and ${\mathcal P}_L$  are isomorphic.

In Theorem~\ref{th-last} we prove  in the group $H^3(B, \dz)$  an equality
$$
12 \, [{\mathcal Det}(L)] = 6  \, \pi_* (c_1(L) \cup c_1(L))  \, \mbox{,}
$$
where $c_1(L) \in H^2(M, \dz)$ is the first Chern class of $L$, and $\pi_*  :  H^4(M, \dz)  \to H^3(B, \dz)$ is the Gysin map. In this equality $12 \, [{\mathcal Det}(L)]$ corresponds to the $12$th power of the determinant central extension of $\g$, $6  \, \pi_* (c_1(L) \cup c_1(L))$ corresponds to the central extension $\langle \Lambda, \Lambda \rangle^6$, and the central extensions $\langle \Lambda, \Omega \rangle^{-6}$ and $  \langle \Omega, \Omega \rangle$ give zero impact in $H^3(B, \dz)$, see the proof of Theorem~\ref{th-last}.

We call this equality  the topological Riemann-Roch theorem, since the fibres of $\pi$ are circles, i.e. $\pi$ is not  a morphism of complex manifolds, and besides, the equality is in $H^3(B, \dz)$.

Note that in the group $H^3(B, \C)$ the equality $2 \, [{\mathcal Det}(L)] =  \, \pi_* (c_1(L) \cup c_1(L))$ (that also follows from our equality using the map $H^3(B, \dz) \to H^3(B, \C)$) was proved by  P.~Bressler,  M.~Kapranov, B.~Tsygan and E.~Vasserot   in~\cite{BKTV}  by another methods.

\subsection{Organization of the paper}
The paper is organized as follows.

In Section~\ref{sec-two} we study spaces of analytic functions on a closed analytic curve $\Gamma$ in $\C$, explain that  these spaces are Silva spaces, and  describe the Grothendieck--K\"othe--Sebasti\~ao e Silva duality when $\Gamma = S^1$.

In Section~\ref{Expl_descr}  we consider the groups $\h^*$ and $\DS$ as  Lie groups modelled on Silva spaces. Besides, we prove various group decompositions for the group $\h^*$.

In Section~\ref{weld} we prove decompositions for elements of the group $\DS$ using conformal welding. We give also the application of this result to the action of elements of $\g^0$ on $\h$.

In Section~\ref{sect-cont}  we study the spaces of continuous linear operators $\Hom( {\mathcal V}_1, {\mathcal V}_2)$ and their subspaces $\Homb( {\mathcal V}_1, {\mathcal V}_2)$, where ${\mathcal V_1}$ and ${\mathcal V_2}$ belong to the set
  $\left\{\h_- ,  \h_+  \right\}$. We define also the trace  and the determinant for operators from $\Homb( \h_+, \h_+)$ and $1 + \Homb( \h_+, \h_+)$ correspondingly, and study the properties.

In Section~\ref{group-gl} we define the groups  ${\mathop{ \rm GL}}^+_{\rm res}(\h)$ and ${\mathop{ \rm GL}}^-_{\rm res}(\h)$ acting on $\h$,  and define their subgroup ${\mathop{ \rm GL}}^0_{\rm res}(\h)$.
Then we define the determinant central extensions of these groups.

In Section~\ref{det-centr-g}   we prove that $\G^0$ is a subgroup of the group ${\mathop{ \rm GL}}^0_{\rm res}(\h)$ and define the determinant central extension of $\G^0$ as the pullback of the determinant central extension of ${\mathop{ \rm GL}}^0_{\rm res}(\h)$. We construct the explicit smooth  $2$-cocycle $D$ for the determinant central extension of $\G^0$. We study the Lie algebra  $\mathop{\rm Lie} \g^0$ of the Lie group $\g$ and the complex Lie algebra $\Lie_{\C} \g^0$. This leads to the construction of a unique extension of the determinant central extension from the Lie subgroup $\g^0$ to the Lie group $\g$.

In Section~\ref{expl-2-coc} we describe the pairing $\mathbb T$ and then apply it to the construction of the explicit smooth  $2$-cocycles
$
\langle \Lambda, \Lambda  \rangle
$,
$
\langle   \Lambda , \Omega                 \rangle  $,
$
\langle   \Omega, \Omega                                \rangle
$ on the group $\g$ with coefficients in the group $\C^*$.

In Section~\ref{dec-th}   we decompose  in the group $H_{\rm sm}^2(\g, \C^*)$
the class of the $12$th power of the determinant central extension
into the product on integer powers of the explicit $2$-cocycles constructed in Section~\ref{expl-2-coc}. For this goal we use the corresponding decomposition for the Lie algebra $2$-cocycles.

In Section~\ref{DelGys}   we recall the Deligne cohomology on smooth manifolds. We also prove Theorem~\ref{th-9} that relates the Gysin map for a  fibration  in oriented circles applied to the $\cup$-product in singular cohomology with integer coefficients and
the map (or pairing) $\mathbb T$.

In Section~\ref{sec-last} we relate the group cohomology, the \v{C}ech cohomology and  gerbes. We prove also the topological Riemann-Roch theorem for complex line bundles on  fibrations in oriented circles.


\bigskip

\noindent {\bf \large Acknowledgments}

\medskip

I am grateful to the referee of this paper,  who pointed out to me some references, see Remark~\ref{referee} below.

\section{Analytic functions on a closed curve in $\sdc$}
\label{sec-two}

\subsection{A closed curve $\Gamma$ in $\C$}
\label{curve}

Let $\widehat{\C}= \mathbb{C}  \cup \infty = \C P^1 $ be the Riemann sphere (or, in other words, the extended complex plane).

For any domain $W \subset \widehat{\C}$ denote by ${\h{\left(W \right)} = H^0(W, \oo)}$ the $\C$-vector space of holomorphic functions on $W$.
If $\infty \in W$, then by $\h_0(W)  \subset \h(W)$ denote the $\C$-vector subspace of functions that vanish at $\infty$.

Let  $S^1 = \{z \in \mathbb{C} \, \mid  \,  |z|  =1   \}$ be the unit circle in  $\C$.

Let $\Gamma$ be a closed curve in $\C$ such that $\Gamma$ is the image of $S^1$ under a holomorphic univalent map defined in a neighbourhood of $S^1$.

By $\h(\Gamma)$ {\em denote} the $\C$-vector space of  $\C$-valued functions on $\Gamma$ that can be extended to holomorphic functions in a neighbourhood of $\Gamma$ (this neighbourhood depends on a function).

The closed curve $\Gamma$ defines the disjoint union
\begin{equation}  \label{disj_union}
\widehat{\C} = W_{\Gamma,+} \sqcup  \Gamma \sqcup W_{\Gamma,-}  \, \mbox{,}
\end{equation}
where the connected domain $W_{\Gamma,+}$ is in  $\C$, and the connected domain $W_{\Gamma,-}$ contains~$\infty$.

Let $\h_+(\Gamma) \subset \h(\Gamma)$  be the $\C$-vector subspace of  functions that can be extended to
holomorphic functions on $W_{\Gamma,+}$. {(Recall that any holomorphic function on a connected domain that contains $\Gamma$ is uniquely defined by its restriction to $\Gamma$.)}

Let
$\h_-(\Gamma) \subset \h(\Gamma)$ be the $\C$-vector subspace of  functions that can be extended to holomorphic functions on $W_{\Gamma,-}$ that vanish at $z= \infty$.

\begin{prop}  \label{dec-gamma}
There is a canonical decomposition
\begin{equation}  \label{hol-dec}
\h(\Gamma) = \h_-(\Gamma) \oplus \h_+(\Gamma)  \, \mbox{.}
\end{equation}
\end{prop}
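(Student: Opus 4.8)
The plan is to use the classical Laurent expansion on an annular neighbourhood of $\Gamma$, together with the uniqueness of holomorphic continuation, to produce both the direct sum decomposition and the fact that the sum is direct. First I would note that since $\Gamma$ is the image of $S^1$ under a holomorphic univalent map $\phi$ defined on a neighbourhood of $S^1$, any $f \in \h(\Gamma)$ extends holomorphically to some open set $U \supset \Gamma$; shrinking $U$, I may assume $U = \phi(A)$ for an open annulus $A = \{r < |z| < R\}$ with $r < 1 < R$. Pulling back along $\phi$, the function $f \circ \phi$ is holomorphic on $A$, so it has a Laurent expansion $f \circ \phi = \sum_{n \in \Z} a_n z^n$ converging on $A$. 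Split this as $g_- = \sum_{n < 0} a_n z^n$ (converging on $|z| > r$, vanishing at $\infty$) and $g_+ = \sum_{n \ge 0} a_n z^n$ (converging on $|z| < R$), so $f \circ \phi = g_- + g_+$ on $A$.

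Next I would transport this back to $\Gamma$. The domains $W_{\Gamma,+}$ and $W_{\Gamma,-}$ from the disjoint union~\eqref{disj_union} are the ``inside'' and ``outside'' components; since $\phi$ is univalent on the annulus $A$ and $\Gamma = \phi(S^1)$, the component of $\widehat{\C} \setminus \Gamma$ meeting $\phi(\{1 < |z| < R\})$ is $W_{\Gamma,+}$ and the one meeting $\phi(\{r < |z| < 1\})$ is $W_{\Gamma,-}$ (the latter contains $\infty$). The subtle point is that $g_+$ need only a priori be defined on $\phi(\{|z| < R\} \cap A)$, a punctured-disk-type neighbourhood of part of $\Gamma$, not on all of $W_{\Gamma,+}$. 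To remedy this I would invoke the hypothesis built into $\h_+(\Gamma)$: a function on $\Gamma$ lies in $\h_+(\Gamma)$ precisely when it extends holomorphically to all of $W_{\Gamma,+}$. So I should argue that $g_+|_\Gamma$ does extend across all of $W_{\Gamma,+}$. One clean way: the original $f$ extends to $U \supset \Gamma$; consider instead the Cauchy-type decomposition $f = f_- + f_+$ with $f_\pm(w) = \pm\frac{1}{2\pi i}\oint_{\Gamma_\pm} \frac{f(\zeta)}{\zeta - w}\, d\zeta$ for suitable contours $\Gamma_+ \subset W_{\Gamma,+}$-side and $\Gamma_- \subset W_{\Gamma,-}$-side of $\Gamma$ inside $U$; then $f_+$ is holomorphic on the whole region enclosed by $\Gamma_+$, which exhausts $W_{\Gamma,+}$ as $\Gamma_+ \to \Gamma$, and $f_-$ is holomorphic outside $\Gamma_-$, hence on $W_{\Gamma,-}$, and vanishes at $\infty$. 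This gives $f|_\Gamma = f_-|_\Gamma + f_+|_\Gamma$ with $f_+|_\Gamma \in \h_+(\Gamma)$ and $f_-|_\Gamma \in \h_-(\Gamma)$, establishing $\h(\Gamma) = \h_-(\Gamma) + \h_+(\Gamma)$.

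Then I would prove the sum is direct: if $h \in \h_-(\Gamma) \cap \h_+(\Gamma)$, then $h$ extends to a holomorphic function on $W_{\Gamma,+}$ and, separately, to a holomorphic function on $W_{\Gamma,-}$ vanishing at $\infty$; since these two extensions agree with $h$ on $\Gamma$ and $\widehat{\C} = W_{\Gamma,+} \sqcup \Gamma \sqcup W_{\Gamma,-}$, by the identity principle they glue to a holomorphic function on all of $\widehat{\C} = \C P^1$, i.e. a constant, and the constant must be $0$ because it vanishes at $\infty$. Hence $\h_-(\Gamma) \cap \h_+(\Gamma) = 0$. Finally I would remark that the decomposition is canonical: given $f$, the components $f_-$, $f_+$ are uniquely determined by $f$ because any other decomposition $f = f_-' + f_+'$ yields $f_- - f_-' = f_+' - f_+ \in \h_-(\Gamma) \cap \h_+(\Gamma) = 0$.

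The main obstacle, and the one point I would be careful to spell out rather than wave at, is the passage from ``holomorphic near $\Gamma$'' to ``holomorphic on the full component $W_{\Gamma,\pm}$'': the naive Laurent splitting only gives extensions to a thin collar around $\Gamma$, so the Cauchy-integral argument (letting the contour sweep all the way through $W_{\Gamma,+}$, and separately all the way through $W_{\Gamma,-}$ including $\infty$) is what actually does the work, and it uses crucially that $W_{\Gamma,+}$ is simply connected with boundary $\Gamma$ and that $W_{\Gamma,-} \cup \{\infty\}$ is likewise a disk on the sphere. Everything else — convergence of the Laurent series, the identity principle, uniqueness — is routine.
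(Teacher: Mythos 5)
Your overall strategy is a genuinely different route from the paper and is viable in principle: the paper covers $\widehat{\C}$ by two connected noncompact (hence Stein) open sets $U\supset\Gamma$, $V\ni\infty$, reads off $\h(U\cap V)=\h_0(V)\oplus\h(U)$ from the \v{C}ech sequence using $H^1(U,\oo)=H^1(V,\oo)=0$, and passes to the inductive limit over shrinking $U\cap V\supset\Gamma$; you instead propose the classical Cauchy-kernel splitting for existence and the Liouville-type gluing argument on $\widehat{\C}$ for directness. Your uniqueness paragraph (glue the two extensions of an element of $\h_-(\Gamma)\cap\h_+(\Gamma)$ via the identity principle along $\Gamma$, get a holomorphic function on $\widehat{\C}$, hence a constant, hence $0$ since it vanishes at $\infty$) is correct.

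The gap is in the existence step: as written, the two contours are attached to the wrong summands, so the assertion $f_+|_\Gamma\in\h_+(\Gamma)$ fails for the functions you actually define. With $\Gamma_+\subset W_{\Gamma,+}$ (inner contour) and $\Gamma_-\subset W_{\Gamma,-}$ (outer contour), both counterclockwise, Cauchy's formula on the annulus between them gives, for $w$ near $\Gamma$,
\[
f(w)=\frac{1}{2\pi i}\oint_{\Gamma_-}\frac{f(\zeta)}{\zeta-w}\,d\zeta-\frac{1}{2\pi i}\oint_{\Gamma_+}\frac{f(\zeta)}{\zeta-w}\,d\zeta \, \mbox{,}
\]
so the summand holomorphic across $W_{\Gamma,+}\cup\Gamma$ is the integral over the contour lying in $W_{\Gamma,-}$ (it is holomorphic on the whole region enclosed by $\Gamma_-$, which already contains $\overline{W_{\Gamma,+}}$, so no sweeping or exhaustion is needed), while minus the integral over $\Gamma_+$ is the summand holomorphic on the exterior of $\Gamma_+$, which contains $\overline{W_{\Gamma,-}}$, and vanishes at $\infty$. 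Your definition $f_\pm(w)=\pm\frac{1}{2\pi i}\oint_{\Gamma_\pm}\frac{f(\zeta)}{\zeta-w}\,d\zeta$ does the opposite: on the branch that actually enters the identity near $\Gamma$, your $f_+$ is holomorphic outside $\Gamma_+$ (an $\h_-$-type piece), your $f_-$ is of $\h_+$-type, and with your signs $f_-+f_+=-f$. The ``sweeping'' justification also conflates the two branches of the Cauchy transform across its own contour: the branch of $\oint_{\Gamma_+}$ holomorphic inside $\Gamma_+$ differs from the branch defined near $\Gamma$ by the jump $f$, and the exhaustion as described only produces a function on the open set $W_{\Gamma,+}$, never a function on $\Gamma$ itself satisfying $f=f_-+f_+$ there. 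The repair is simply to swap the roles of the two integrals, i.e. set $f_+=\frac{1}{2\pi i}\oint_{\Gamma_-}\frac{f(\zeta)}{\zeta-w}\,d\zeta$ and $f_-=-\frac{1}{2\pi i}\oint_{\Gamma_+}\frac{f(\zeta)}{\zeta-w}\,d\zeta$; then each piece is holomorphic on a neighbourhood of $\overline{W_{\Gamma,+}}$, respectively $\overline{W_{\Gamma,-}}$, at once, and the collar difficulty you single out as the ``main obstacle'' disappears.
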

\begin{proof}
Consider an open covering $\widehat{\C} = U \cup V $, where the Riemann surfaces $U$, $V$ and $U \cap V$ are connected,  $\Gamma \subset U \cap V$,   $U \subset \C$ and $\infty \in V$.

Recall that any connected noncompact Riemann surface $W$ is a Stein manifold, and hence   $H^1(W, \oo)=0$. Therefore $H^1(U, \oo)= H^1(V, \oo) =0$.
Therefore the \v{C}ech complex  for this covering gives an exact sequence:
$$
0 \lrto H^0(\widehat{\C}, \oo)  \lrto H^0(V, \oo)  \oplus H^0(U, \oo)  \lrto H^0(V \cap U, \oo)  \lrto H^1(\widehat{\C}, \oo) \lrto 0 \, \mbox{.}
$$
Since $H^0(\widehat{\C}, \oo)= \C$ and $H^1(\widehat{\C}, \oo)=0$, this sequence gives a decomposition
\begin{equation}  \label{hol-dec-wl}
\h(V \cap U)= \h_0(V) \oplus \h(U)  \, \mbox{.}
\end{equation}

Taking the inductive limit of~\eqref{hol-dec-wl}  by considering smaller and smaller $V \cap U  \supset \Gamma$, we obtain~\eqref{hol-dec}.
\end{proof}

\subsection{When $\Gamma$ is the unit circle}  \label{unit_circle}

In case of $\Gamma = S^1$, using that $\dr/\dz  \simeq S^1$ via the map $ x \mapsto \exp(2 \pi i x)$, the space $\h \left(S^1 \right)$ can be identified with the space of $\C$-valued analytic functions on $\dr$ with period $1$.
We will use also {\em the following notation}
$$
\h = \h \left(S_1 \right) \, \mbox{,} \qquad \h_-= \h_- \left(S^1 \right)  \, \mbox{,}  \qquad \h_+= \h_+ \left(S^1 \right)  \, \mbox{.}
$$

For any real number  $r > 0 $ let the open disk $D_r =\{z \in \C \mid  |z| <r   \} $, the closed disk $\overline{D}_r$ be the closure of the open disk $D_r$, and for any real number $0 < r < 1$ let the annulus $$A_r= \{ z \in \C \mid  1-r < |z| < 1+ r \} \, \mbox{.}$$

We note that
\begin{equation}  \label{indlim}
\h = \varinjlim_{n \in \N} \tilde{\h} \left(A_{1/n} \right) \, \mbox{,} \qquad  \h_-=  \varinjlim_{n \in \N} \tilde{\h}_0 \left(\widehat{\C} \setminus \overline{D}_{1 - 1/n} \right)  \, \mbox{,}
\qquad
\h_+ = \varinjlim_{n \in \N} \tilde{\h} \left( D_{1 + 1/n } \right)   \, \mbox{,}
\end{equation}
where $\tilde{}$ in formula~\eqref{indlim} means that in the corresponding spaces of holomorphic functions $\h(\cdot)$ or $\h_0(\cdot)$  we take
$\C$-vector subspaces of functions that can be extended to continuous functions  on the closure of the domain.

In formula~\eqref{indlim} every vector space under the limit is a Banach space with the supremum  norm  (by the maximum principle it is enough to consider only the supremum  norm on the boundary of the domain).

Then we take the inductive limit topology on $\h$, $\h_+$ and $\h_-$ in the category of locally convex topological vector spaces. This topology makes each of these  spaces into a so-called Silva space (see, e.~g., \cite[\S~I.1]{Neeb1}, \cite[Chapter~1, \S~5]{Mor}  and Remark~\ref{Silva} below with more explanations on Silva spaces).

We note that decomposition~\eqref{hol-dec} in case of $\Gamma = S^1$ is
\begin{equation}  \label{H-decomp}
\h = \h_- \oplus \h_+  \, \mbox{.}
\end{equation}
Other way to obtain  decomposition~\eqref{H-decomp} is to take the  minus and nonnegative part (with respect to powers of $z$) of the Laurent series decomposition.
From  Cauchy's integral formulas for coefficients of the Laurent series we have the estimates for these coefficients, and hence  it is easy to see that the topology on $\h$ described above is the product topology from $\h_-$, $\h_+$ with respect to decomposition~\eqref{H-decomp}.

\begin{nt}  \label{Silva}  \em
Recall that,  {\em by definition},  a locally convex topological vector space  $F$ is called a Silva space if $F$ is  an inductive limit  of a sequence of Banach
spaces $F_{n}$, where $n \in \mathbb{N}$, in the category of locally convex topological vector spaces:
$$
F = \varinjlim_{n \in \mathbb{N}} F_{n} \, \mbox{,}
$$
and  where all the linking linear maps $F_n \to F_{n+1}$ are  compact inclusions, see, e.~g.,~\cite[\S~I.1]{Neeb1} and especially survey in~\cite[Appendix~A, \S~5]{Mor} (but where the Silva spaces  are called $DFS$ spaces).

We point out now  some properties of  Silva spaces, see~\cite[Appendix~A, \S~5]{Mor} (and, except for the  last property, see also~\cite[\S~25, \S~26]{FW}  where a little bit more general situation  is considered).

\begin{itemize}
\item A Silva space  is  a complete Hausdorff locally convex  topological vector space.

\item A sequence $\{x_l\}$ of elements from a Silva space $F$
 converges to an element $x$
 from $F$
 if and only if there exists $n \in \mathbb{N}$ such that
 all the elements $x_l$, $x$  belongs to  $F_{n}$
 and $\{ x_l \}$
 converges to $x$
 in $F_{n}$.

  Moreover, a subset $K$ of the Silva space $F$  is bounded if and only if  there exists $n \in \mathbb{N}$ such that $K$ is a bounded subset of  $ F_n$.

\item Any closed vector subspace of a Silva space  is again a Silva space.

\item If $\vartheta $ is a continuous bijective linear map between Silva spaces, then $\vartheta^{-1}$ is continuous. (Indeed, the graph of $\vartheta$ is closed. Therefore the graph of $\vartheta^{-1}$ is closed. Hence by the closed graph theorem, see~\cite[Corollary~A.6.4]{Mor}, the map $\vartheta^{-1}$  is continuous. Note that the closed graph theorem follows also in our case  from the result of A.~Grothendieck, see~\cite[Introduction~IV.4]{Gro0}.)
\end{itemize}

\end{nt}

\subsection{Grothendieck--K\"othe--Sebasti\~ao e Silva duality}  \label{Grot-dual}

The $\C$-vector space $\h(D_1)$ carries a natural structure of a  Fr\'echet space, and the topology is defined by the countable system of norms $\parallel  \cdot   \parallel_n$, where for any integer $n > 0 $
$$
\parallel  f   \parallel_n  = \max_{|z| = 1- 1/n} \left|f(z) \right|  \, \mbox{.}
$$
This topology coincides with the topology of uniform convergence on compact sets.

The Grothendieck--K\"othe--Sebasti\~ao e Silva duality (see~\cite{dual} and references therein,   \cite[Chapter~2, \S~1]{Mor}, \cite{LR} and Remark~\ref{dual-top} below) gives that the complete locally convex topological vector spaces $\h(D_1)$
and $\h_-$
are dual to each other, i.e., each of them is topologically isomorphic to the continuous dual of other,  where the continuous  dual topological vector  space consists of the continuous linear functionals and has the strong topology. This duality is given by the following pairing
\begin{equation}  \label{exp-pair}
( f, \phi ) =  \oint_{|z| =1 - \gamma} f(z) \phi(z)  dz  \, \mbox{,}
\end{equation}
for $f \in \h \left(D_1 \right)$, $\phi \in \tilde{\h}_0 \left(\widehat{\C} \setminus \overline{D}_{1 - 1/n } \right)$  (see formula~\eqref{indlim}), and where $\gamma $ is any real number such that ${0 < \gamma < 1/n}$. The pairing does not depend on the choice of such $\gamma$.

By considering the change of variable $z \mapsto z^{-1}$ on $\widehat{\C}$,  the analogous duality holds
between the Fr\'echet space $\h_0(\widehat{\C} \setminus \overline{D}_1)$
and the Silva space $\h_+$. Besides, since
\begin{equation}  \label{change_z}
f(z^{-1}) \phi(z^{-1})  d(z^{-1})  = - (z^{-1}f(z^{-1})) (z^{-1}\phi(z^{-1})) dz  \, \mbox{,}
\end{equation}
 the  pairing~\eqref{exp-pair} goes to the  similar pairing, where we have to consider the integral over the curve $|z|= (1 - \gamma)^{-1}$. (The minus
 in formula~\eqref{change_z}   will disappear in the integral, since the change of variable $z \mapsto z^{-1}$ sends the  anticlockwise  direction to the clockwise direction of the integral contour.)

\begin{nt} \em \label{dual-top}
More generally, in the category of locally convex topological vector spaces we have (see, e.g., \cite[Chapter~1, \S~4]{Mor})
$$
\h(D_1)  = \varprojlim_{n \in \mathbb{N}} \tilde{\h} \left( D_{1 - 1/n } \right)  \, \mbox{.}
$$
And the linking linear maps $\tilde{\h}\left( D_{1 - 1/(n+1) } \right)  \to \tilde{\h}\left( D_{1 - 1/n }\right)$ between the  Banach spaces are compact. This is an example of
$FS$ space (see~\cite[Appendix A, \S~4]{Mor}) which is, by definition,  a countable projective limit (in the category of locally convex topological vector spaces) of Banach spaces with compact linear linking maps. And $FS$-spaces and Silva spaces are strongly dual to each other, see~\cite[Appendix A, \S~6]{Mor} or \cite[\S~26]{FW}. Pairing~\eqref{exp-pair} gives an explicit example of such duality.
\end{nt}

\section{Functions from $S^1$ to $\C^*$ and diffeomorphisms of~$S^1$}  \label{Expl_descr}

\subsection{Lie groups of functions from $S^1$ to $\C^*$}   \label{funct}
Let $C^{\infty}(S^1, \C^*)$ be the group of all smooth functions from  $S^1 $  to $\C^*$, where $\C^* = \C \setminus 0$ and we consider $S^1$ in $\C$ as in Section~\ref{curve}.

Recall that for any
$g$ from $C^{\infty}(S^1, \C^*)$
  the winding number $\nu(g)$ is
\begin{equation}\label{wind}
\nu(g) = \frac{1}{2 \pi i} \oint_{S^1} \frac{dg}{g}   \,\,  \in  \,\,  \Z  \mbox{.}
\end{equation}
The integer $\nu(g)$ equals also the degree of the map $\mathop{\rm arg} (g) $ from $S^1$ to $S^1$.

The map $\nu$ is a homomorphism from the group $C^{\infty}(S^1, \C^*)$ to the group  $\dz$.

\bigskip

By $\h^* $ {\em denote} the group of invertible elements of the ring $\h = \h(S^1)$. The group $\h^*$ consists of functions from the space $\h$ that take the values in $\C^*$.

By $\h^*_0 $  {\em denote} the subgroup of the group $\h^*$ that consists of functions with zero winding number.

By $\h_{+,1}^*$ {\em denote} the subgroup of the group $\h^*_0$ that consists of functions that can be extended to holomorphic functions
on $W_{S^1,+}$ (see formula~\eqref{disj_union}) taking the values in $\C^*$ and equal to $1$ at $z =0$.

By $\h_{-,1}^*$ {\em denote} the subgroup of the group $\h^*_0$ that consists of functions that can be extended to holomorphic functions
on $W_{S^1,-}$ (see formula~\eqref{disj_union}) taking the values in $\C^*$ and equal to $1$ at $z =\infty$.

Since $\nu(z) =1$, for any  $f \in \h^* $ we have $f= z^{\nu(f)} \cdot \left( f z^{-\nu(f)}  \right)$. This gives a canonical group  decomposition
\begin{equation}  \label{dec1}
\h^* = \Z \times \h^*_0  \, \mbox{.}
\end{equation}

\begin{prop}  \label{dec-funct}
There are  canonical group decompositions
\begin{equation}  \label{dec2}
\h^*_0 = \C^* \times \h_{-,1}^* \times \h_{+,1}^*  \qquad \mbox{and}  \qquad
\h^* = \Z \times \C^* \times \h_{-,1}^* \times \h_{+,1}^*  \, \mbox{.}
\end{equation}
\end{prop}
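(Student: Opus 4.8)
The plan is to deduce both decompositions from the additive splitting $\h = \h_- \oplus \h_+$ of Proposition~\ref{dec-gamma} (equation~\eqref{H-decomp}) by passing through the exponential map, using crucially that $\h_0^*$ is abelian, being a subgroup of the abelian group $C^{\infty}(S^1,\C^*)$ under pointwise multiplication. Since the constants $\C^*$, the subgroup $\h_{-,1}^*$ and the subgroup $\h_{+,1}^*$ pairwise commute inside $\h_0^*$, the multiplication map $\C^* \times \h_{-,1}^* \times \h_{+,1}^* \to \h_0^*$, $(c,f_-,f_+)\mapsto c\,f_-\,f_+$, is automatically a group homomorphism; so it suffices to show it is bijective, and then to obtain the second formula in~\eqref{dec2} by combining with the decomposition $\h^* = \Z \times \h_0^*$ already recorded in~\eqref{dec1}.

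For surjectivity, the key observation is that any $f \in \h_0^*$ admits a single-valued logarithm. Indeed $f$ is holomorphic and nowhere zero on some annulus $A_{1/n}$, whose first homology is generated by the core circle $S^1$, and $\frac{1}{2\pi i}\oint_{S^1} df/f = \nu(f) = 0$; hence $df/f$ is exact on $A_{1/n}$ and a branch $\log f \in \h$ exists. Writing $\log f = g_- + g_+$ according to~\eqref{H-decomp}, I would then set $c = \exp(g_+(0)) \in \C^*$, $f_- = \exp(g_-)$ and $f_+ = \exp\big(g_+ - g_+(0)\big)$. Here $f_-\in\h_{-,1}^*$, since $\exp(g_-)$ extends holomorphically and invertibly over $W_{S^1,-}$ and equals $1$ at $\infty$ because $g_-$ vanishes there; likewise $f_+\in\h_{+,1}^*$, since $\exp\big(g_+-g_+(0)\big)$ extends holomorphically and invertibly over $W_{S^1,+}$ and equals $1$ at $z=0$; and $c\,f_-\,f_+ = \exp(g_-+g_+) = f$.

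For injectivity, suppose $c\,f_-\,f_+ = c'\,f_-'\,f_+'$. Then $(c/c')(f_-/f_-')$ and $f_+'/f_+$ are holomorphic and nowhere zero on $W_{S^1,-}$ and $W_{S^1,+}$ respectively and agree on $S^1$, hence (by uniqueness of analytic continuation) glue to a holomorphic function on all of $\widehat{\C}$, which is therefore constant. Evaluating this constant at $\infty$ gives $c/c'$ and at $z=0$ gives $1$, so $c=c'$; then $f_-/f_-'=1$ and $f_+'/f_+=1$, i.e. $f_-=f_-'$ and $f_+=f_+'$.

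I expect the only non-formal points to be the existence of the single-valued logarithm — which is exactly where the hypothesis $\nu(f)=0$ enters — together with the gluing/Liouville argument in the uniqueness step. The remaining verifications (that $\exp$ carries $\h_-$, resp.\ the subspace of $\h_+$ of functions vanishing at $z=0$, isomorphically onto $\h_{-,1}^*$, resp.\ $\h_{+,1}^*$, and that $\exp\colon\C\to\C^*$ has kernel $2\pi i\,\Z$) are routine. No topological subtleties arise, since the statement is purely algebraic and all operations are pointwise in the function variable.
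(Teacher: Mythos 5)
Your proof is correct and follows essentially the same route as the paper: use $\nu(f)=0$ to get a single-valued branch $\log f\in\h$, split it via $\h=\h_-\oplus\h_+$, and exponentiate to produce the factors in $\C^*\times\h_{-,1}^*\times\h_{+,1}^*$, then combine with $\h^*=\Z\times\h_0^*$. The only difference is cosmetic: the paper deduces uniqueness directly from the uniqueness of the additive splitting of $\log f$, whereas you verify injectivity by the gluing/Liouville argument, which is an equally valid (and self-contained) way to finish.
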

\begin{proof}
Because of formula~\eqref{dec1}, it is enough to obtain the first decomposition.

If $f \in \h^*$ and $\nu(f) =0$, then there is a branch $\log f \in \h$. Therefore by formula~\eqref{H-decomp} we have the unique decomposition
$\log (f) = g_- + g_+$. Hence we have the unique decomposition
$$
f = \exp(g_+)(0) \cdot \left(\exp(g_+) \exp(g_+)(0)^{-1} \right)  \cdot \exp(g_-)  \, \mbox{,}
$$
where $\exp(g_+)(0)  \in \C^*$, $\exp(g_+) \exp(g_+)(0)^{-1}   \in \h_{+,1}^* $ and $\exp(g_-) \in \h_{-,1}^* $.
\end{proof}

\bigskip

The homomorphism $\exp$ gives the isomorphism between $\h_-$ and $\h_{-,1}^*$, and also between the closed $\C$-vector subspace of $\h_+$ (consisting of functions that vanish at $z=0$) and $\h_{+,1}^*$.

Using these isomorphisms, the direct product topology in decompositions~\eqref{dec2} and the discrete topology on the group $\Z$, we obtain the structure of infinite-dimensional Lie group on each of the  groups:  $\h_{-,1}^*$,
$\h_{+,1}^*$, $\h_0^*$, $\h^*$. These Lie groups are modelled on locally convex topological vector spaces, or,  more exactly, on Silva spaces. (See the recent survey on  Lie groups modelled on locally convex topological vector spaces  in~\cite{Neeb1}.)

Besides, since any topological vector space is contractible,  we have
$$
\pi_1 \left(\h_{-,1}^* \times \h_{+,1}^* \right) = \{0\} \, \mbox{,} \qquad
\pi_1 \left(\h^* \right)= \pi_1 \left(\h_0^* \right) = \pi_1 \left(\C^* \right) = \Z  \,  \mbox{.}
$$

\subsection{Lie group of analytic diffeomorphisms of $S^1$}   \label{anal_diff}

By $\DS$ denote the group of analytic diffeomorphisms of $S^1$ that preserve the orientation. (Note that sometimes the group of real analytic diffeomorphisms of a real analytic manifold $\Upsilon$ is denoted by $\mathop{\rm Diff}^{\omega}(\Upsilon) $, but we will not use this notation.)

Recall that we consider $S^1$ as the unit circle in $\C$.
It is easy to see that a bijective function ${f : S^1 \to S^1}$ belongs to $\DS$ if and only if
$f$ can be extended to the univalent holomorphic function defined in a neighbourhood of $S^1$ (this extension is always unique). Further in the article we will use {\em this description} of  $\DS$.

We will describe the structure of infinite-dimensional Lie group on $\DS$. This Lie group is modelled  on locally convex topological vector spaces, or,  more exactly, on Silva spaces.

We give the description of the universal covering group of the group  $\DS$ and its topological structure. This description is similar to the well-known description of the group of smooth diffeomorphisms of $S^1$, but in case of group of analytic diffeomorphisms the Silva spaces appear.

Consider the group  $\WDS$ that consists of real analytic functions $\phi \, : \, \dr \to \dr$ such that $\phi(x+1)= \phi(x) +1$ and $\phi'(x) > 0$ for any $x \in \dr$, and the group structure is given by the composition of functions.  Using that $\dr/\dz \simeq S^1$ via $x \mapsto \exp(2 \pi i x)$, we obtain the central extension of groups
\begin{equation}  \label{z-ext}
0 \lrto \dz \stackrel{\varrho}{\lrto} \WDS  \stackrel{\varsigma}{\lrto} \DS  \lrto 1  \, \mbox{,}
\end{equation}
where $\varrho(n)(x) = x +n $ for $n \in \dz$, $x \in \dr$, and $\varsigma(\phi)= \exp(2 \pi i \phi)$ for $\phi \in \WDS$.

The map $\phi \mapsto \phi -x$ gives the isomorphism between the set $\WDS$ and the set $\mathcal L$ of periodic real analytic functions $\varphi \, : \, \dr \to \dr$ with period $1$ and the property $\varphi'(x) > -1$ for any $x \in \dr$.
The set $\mathcal L$ is a subset in the $\dr$-vector space $\h_{\dr}$ of real analytic functions from $S^1 \simeq \dr/\dz$ to $\dr$. It is easy to see that $\h_{\dr}$ is a closed $\dr$-vector subspace of $\h = \h(S^1)$. Clearly, a closed vector subspace of a Silva space is again a Silva space (see also Remark~\ref{Silva}). Therefore the $\dr$-vector space  $\h_{\dr}$ is a Silva space.

Now the condition $\max_{x \in \dr} \varphi'(x) > -1$ gives the open subset in the Silva space  $\h_{\dr}$. Indeed,  let $\tilde{\varphi}(z) = \varphi(x)$, where $z = \exp(2 \pi i x)$.  Then
$$
\frac{d}{ dx } {\varphi} = 2 \pi i z \frac{d}{d z} \tilde{\varphi} \, \mbox{.}
$$
Therefore the condition $\min_{x \in \dr} \varphi'(x) > -1$ is equivalent to the condition
$$
\quash{\min_{z \in S^1} 2 \pi i z \frac{d}{dz} \tilde{\varphi}(z)  =}
\min_{z \in S^1} \mathop{\rm Re} \left( 2 \pi i z \frac{d}{dz} \tilde{\varphi}(z) \right)  \,   >  \,
-1  \, \mbox{.}
$$
 The last condition  defines the open subset in $\h$ and correspondingly in $\h_{\dr}$, since an upper estimate of the supremum norm of  a holomorphic function  on the closure of an open neighbourhood of $S^1$  entails an upper estimate of the supremum norm on $S^1$ of the derivative of the function due to Cauchy's integral formula for the derivative.

 This description of $\mathcal L$ as the open subset of the Silva space $\h_{\dr}$ gives the structure of smooth infinite-dimensional Lie group on $\WDS$ (to see that it is a smooth group we have to use the Taylor formula and Caushy's estimates for higher derivatives). Therefore, using central extension~\eqref{z-ext}, we obtain the structure of smooth infinite-dimensional Lie group (modelled on Silva spaces) on~$\DS$.

The topological space $\mathcal L$ is contractible via the homotopy $(\varphi, \lambda)  \mapsto \lambda \varphi$, where $\varphi \in \mathcal L$, $\lambda \in [0,1]$, between the identity map and the constant map to the element $0 \in {\mathcal L}$.

Hence, $\pi_1 \left(\WDS \right) = \{ 1 \}$, and by central extenion~\eqref{z-ext}, $\WDS$ is the universal covering group of $\DS$. Besides, the natural embedding of the unitary group ${\mathrm U}(1)$ to the group  $\DS$ gives the isomorphism
$$
\dz \, \simeq \, \pi_1 \left({\mathrm U}(1) \right) \, \simeq \, \pi_1 \left(\DS \right)  \, \mbox{.}
$$

\section{Conformal welding and block matrix}
\label{weld}
\subsection{Conformal welding}  \label{Conf-weld}
Here we describe the conformal welding for elements of $\DS$ (see also~\cite[\S~5.1]{AST} which is based on the case of smooth diffeomorphisms given in~\cite[\S~2]{K}).

For elements of~$\DS$ we describe decompositions that are similar to decompositions for elements of $\h_0^*$ given in  Proposition~\ref{dec-funct}.

Let a function $f$ belongs to the group $\DS$. Then there is a real number $r$ such that ${ 0 < r < 1}$ and the function $f$ is a univalent holomorphic function on an open neighbourhood of the closure of the  annulus $A_{r}$ (see notation in Section~\ref{unit_circle}). Consider the bounded open domain $ U$  in $\C$ such that its boundary $\overline{U} \setminus U$  is the image of the circle $\{z \in \mathbb{C} \, \mid  \,  |z|  =1 -r  \}$
 under the map~$f$.  Taking the smaller $r$ if necessary, we  suppose that $0 \in U$ and $\infty \in \widehat{\C} \setminus \overline{U}$.
Here $\overline{{U}}$ is the closure of ${U}$.

We glue the open disk $D_{1+r}$ with the open domain $\widehat{\C} \setminus \overline{{U}}$ by identification of the annulus $A_r$ with its image  under the map $f$. We obtain the compact Riemann surface that is biholomorphic to $\widehat{\C}$. (Indeed,
the $2$-sphere $S^2$ has a unique Riemann surface structure, since by the uniformization theorem, any simply connected Riemann surface is biholomorphic either to the open unit disk, or to $\C$, or to $\widehat{\C}$.)

Therefore there are univalent holomorphic maps $f_+ \, : \, D_{1+r}  \to \C$ and $f_-  \, : \,  \widehat{\C} \setminus \overline{U}  \to \widehat{\C}$  such that $f_+ = f_- \circ f$ on the annulus $A_r$,
where $\circ$ means the composition of functions.
Since
$$\mathop{\rm Aut} \widehat{\C} = \mathop{\rm Aut} \C P^1 = \mathop{\rm PSL}(2, \C)  \, \mbox{,}$$
we can demand
$f_+(0)=0$, $f_-(\infty) = \infty$, and  also  $ (f_+)'(0)=1 $ (or
 $(f_-)'(\infty) =1$ instead of the last condition). {\em Here and further in the article we will use notation $'$ for $\frac{d}{dz}$ applied to a holomorphic function on a domain in $\widehat{\C}$, and in particularly, applied to elements of $\DS$.}

 Thus, we have the following proposition, which we will use in Section~\ref{App}.

 \begin{prop}  \label{diff-decomp}
 For any $f \in \DS$ there are univalent holomorphic maps $f_+$  from an open neighbourhood of $\overline{D}_1$ to $\C$ and $f_-$ from an open  neighbourhood
 of $\widehat{\C} \setminus D_1 $ (in $\widehat{\C}$) to $\widehat{\C}$  such that
 \begin{equation} \label{de-diff}
   f_+ = f_- \circ f  \quad \mbox{on} \quad S^1  \, \mbox{,} \qquad f_+(0) =0  \, \mbox{,}  \qquad f_-(\infty) = \infty \, \mbox{.}
 \end{equation}
 Moreover, it is possible to demand $ (f_+)'(0)=1 $ (or
 $(f_-)'(\infty) =1$). Then, after this additional condition,  decomposition~\eqref{de-diff} will be  unique.
 \end{prop}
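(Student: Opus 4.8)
The plan is to run the conformal welding construction already set up in the paragraphs preceding the statement, and then add the normalisation and uniqueness bookkeeping. Fix $f \in \DS$. By the description of $\DS$ in Section~\ref{anal_diff}, $f$ extends to a univalent holomorphic function on an open neighbourhood of $\overline{A}_r$ for some $0 < r < 1$; shrinking $r$ I may also assume that $0$ lies in the bounded Jordan domain $U$ with $\partial U = f(\{|z| = 1-r\})$, that $\overline{U} \subset D_1$, and that $\infty \notin f(\overline{A}_r)$ (here one uses that the holomorphic extension of $f$ maps the inside of $S^1$ to the inside of $S^1$ near $S^1$, by orientation). Now glue $D_{1+r}$ to $\widehat{\C}\setminus\overline{U}$ by identifying $w \in A_r$ with $f(w)$. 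The resulting Riemann surface $X$ is Hausdorff, compact (it is the union of the images of the compact sets $\overline{D}_1$ and $\widehat{\C}\setminus D_1$), and covered by two simply connected charts with connected intersection, hence homeomorphic to the $2$-sphere and so simply connected. By the uniformization theorem $X$ is biholomorphic to $\widehat{\C}$; fix such a biholomorphism and let $f_+$, $f_-$ be its restrictions to the chart domains $D_{1+r}$ and $\widehat{\C}\setminus\overline{U}$. Then $f_+$, $f_-$ are univalent holomorphic, $f_+ = f_- \circ f$ on $A_r$ (hence on $S^1$), and, since $\widehat{\C}\setminus D_1 \subset \widehat{\C}\setminus\overline{U}$, the map $f_-$ is defined on an open neighbourhood of $\widehat{\C}\setminus D_1$.

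Next I would normalise using $\mathop{\rm Aut}\widehat{\C} = \mathop{\rm PSL}(2, \C)$, which acts on the target: post-compose the uniformizing map with a Möbius transformation so that $f_-(\infty) = \infty$ and $f_+(0) = 0$, and then rescale so that $(f_+)'(0) = 1$ (or instead $(f_-)'(\infty) = 1$). It remains to check $f_+$ actually lands in $\C$ near $\overline{D}_1$. The closed regions $f_+(\overline{D}_1)$ and $f_-(\widehat{\C}\setminus D_1)$ are the two pieces cut out by the Jordan curve $f_+(S^1) = f_-(S^1)$, and $\infty = f_-(\infty)$ lies in the second one, so $\infty \notin f_+(\overline{D}_1)$; moreover on the overlap $f_+(A_r) = f_-(f(A_r))$ with $\infty \notin f(\overline{A}_r)$ by our choice of $r$, so $\infty \notin f_+$ of a small neighbourhood of $\overline{D}_1$. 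Restricting $f_+$ to such a neighbourhood gives a univalent holomorphic map into $\C$, which proves existence of the decomposition~\eqref{de-diff}.

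For uniqueness, suppose $(g_+, g_-)$ is another pair satisfying~\eqref{de-diff} with $(g_+)'(0) = 1$. On the common overlap, $g_+ \circ f_+^{-1} = g_- \circ f \circ f_+^{-1} = g_- \circ f \circ (f_- \circ f)^{-1} = g_- \circ f_-^{-1}$, so the two maps $g_+\circ f_+^{-1}$ and $g_-\circ f_-^{-1}$ glue to a single holomorphic self-map $h$ of $\widehat{\C}$; it is injective, hence $h \in \mathop{\rm PSL}(2,\C)$, it fixes $0$ and $\infty$ (as $g_\pm$ and $f_\pm$ share those normalisations), and $h'(0) = (g_+)'(0)/(f_+)'(0) = 1$. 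A Möbius transformation fixing $0$ and $\infty$ has the form $z \mapsto az$, so $a = 1$ and $h$ is the identity, whence $g_+ = f_+$ and $g_- = f_-$, first on the overlap and then on the whole connected domains by the identity theorem. I expect the only genuinely delicate points to be the topological identification of the glued surface $X$ with the sphere (so that uniformization applies) and the careful choice of $r$ together with the argument that $f_+$ avoids $\infty$; the rest is routine normalisation and the standard rigidity of $\mathop{\rm PSL}(2,\C)$.
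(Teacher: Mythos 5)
Your argument is essentially the paper's own: the proposition is stated there as a summary of the preceding conformal-welding discussion, which performs exactly your construction (glue $D_{1+r}$ to $\widehat{\C}\setminus\overline{U}$ along $A_r$ via $f$, uniformize the resulting compact simply connected Riemann surface, and normalise using $\Aut\widehat{\C}=\mathop{\rm PSL}(2,\C)$), and you in fact supply more detail than the paper, notably for the uniqueness, which the paper only asserts. The one step you state without justification is that the glued map $h$ is injective: being assembled from two injective pieces does not give this by itself, since for a competing pair $(g_+,g_-)$ satisfying only~\eqref{de-diff} nothing rules out a priori that $g_+(D_1)$ meets $g_-(\widehat{\C}\setminus\overline{D}_1)$, which is exactly what global injectivity of $h$ would exclude. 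The repair is one line: $h$ is a non-constant holomorphic self-map of $\widehat{\C}$, hence a rational map, and $h^{-1}(\infty)=\{\infty\}$ with multiplicity one (on the inner piece $h=g_+\circ f_+^{-1}$ takes values in $\C$, while on the outer piece $h=g_-\circ f_-^{-1}$ equals $\infty$ only at $\infty$, where it is locally univalent), so $\deg h=1$ and $h$ is a M\"obius transformation; alternatively, $h$ has no critical points, so Riemann--Hurwitz forces $\deg h =1$. A cosmetic remark on the same step: the relations $f_+=f_-\circ f$ and $g_+=g_-\circ f$ are only given on $S^1$, so your computation shows $g_+\circ f_+^{-1}=g_-\circ f_-^{-1}$ on the analytic curve $f_+(S^1)$, and the agreement on a full neighbourhood (needed for the gluing) then comes from the identity theorem.
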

In particularly, we have $f_+(S^1) = f_-(S^1) \subset \C$, and $z =0$ is inside the open domain whose boundary is $f_+(S^1)$.

\subsection{Applications and block matrix}  \label{App}

We will give now applications of conformal welding described in Section~\ref{Conf-weld}, which we will need further.

\begin{prop}  \label{wind-diff}
For any $f \in \DS$ the winding number
$$
\nu \left(  f' \right) = 0  \, \mbox{.}
$$
\end{prop}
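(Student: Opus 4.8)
The plan is to combine the conformal welding decomposition of Proposition~\ref{diff-decomp} with the fact, recorded in Section~\ref{funct}, that $\nu$ is a group homomorphism from $C^{\infty}(S^1,\C^*)$ to $\Z$. Take the welding $f_+ = f_- \circ f$ from~\eqref{de-diff}, which (from the construction in Section~\ref{Conf-weld}) holds on an annulus around $S^1$, and differentiate it with respect to $z$: this gives $f_+'(z) = f_-'(f(z))\, f'(z)$ near $S^1$, hence
$$
f'(z) = \frac{f_+'(z)}{f_-'(f(z))}\,, \qquad z \in S^1\,,
$$
where the factors on the right are everywhere nonzero because a univalent holomorphic map has non-vanishing derivative. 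Applying $\nu$, it suffices to prove $\nu\bigl(z \mapsto f_+'(z)\bigr) = 0$ and $\nu\bigl(z \mapsto f_-'(f(z))\bigr) = 0$.

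The first is immediate: $f_+'$ extends to a holomorphic, nowhere vanishing function on an open disk $D_{1+\ve}$ containing $S^1$, so by the argument principle its winding number along $S^1$ counts the zeros minus poles of $f_+'$ inside $S^1$, which is $0$. For the second, I would first use that $f$ restricts to a degree-one self-map of $S^1$, since it preserves orientation; hence $\arg(f_-' \circ f) = (\arg f_-') \circ f$ as maps $S^1 \to S^1$, and therefore $\nu\bigl(z \mapsto f_-'(f(z))\bigr) = \deg(f) \cdot \nu\bigl(f_-'|_{S^1}\bigr) = \nu\bigl(f_-'|_{S^1}\bigr)$. So everything reduces to $\nu\bigl(f_-'|_{S^1}\bigr) = 0$, which I would get from the argument principle applied to the exterior domain $\widehat{\C} \setminus \overline{D}_1$ (which contains $\infty$ and has boundary $S^1$): if $f_-'$ is holomorphic and non-vanishing on a neighbourhood of that domain, then its winding number along $S^1$ is $0$.

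The only point needing care — and the main obstacle — is the behaviour of $f_-'$ at $\infty$: although $f_-$ itself has a simple pole there (being univalent with $f_-(\infty) = \infty$), one must check that $f_-'$ is nonetheless holomorphic and non-vanishing at $\infty$. This is done in the coordinate $w = 1/z$: writing $f_- = 1/\zeta(w)$ with $\zeta$ univalent near $w = 0$, $\zeta(0) = 0$ and $\zeta'(0) \neq 0$, the chain rule gives $f_-'(z) = w^2\, \zeta'(w) / \zeta(w)^2$, whose value at $w = 0$ is $1/\zeta'(0) \neq 0$; so $f_-'$ extends holomorphically across $\infty$ with no zero there, and it is non-vanishing on the rest of its domain by univalence of $f_-$. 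Combining the two vanishing statements gives $\nu(f') = 0 - 0 = 0$.

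One can also bypass conformal welding entirely: lifting $f$ to $\widetilde{f} \in \WDS$ with $\widetilde{f}(x+1) = \widetilde{f}(x)+1$ and $\widetilde{f}'>0$, and writing $z = e^{2\pi i x}$, the chain rule yields $f'(z) = \widetilde{f}'(x)\, e^{2\pi i(\widetilde{f}(x)-x)}$ on $S^1$; since $\widetilde{f}'(x)$ is real and positive it does not affect the winding number, so $\nu(f') = \int_0^1 \bigl(\widetilde{f}'(x)-1\bigr)\, dx = \widetilde{f}(1) - 1 - \widetilde{f}(0) = 0$.
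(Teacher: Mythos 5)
Your main argument is essentially the paper's own proof: the welding identity $f_+ = f_- \circ f$, the chain rule, the homomorphism property of $\nu$, and the vanishing of $\nu\left((f_+)'\right)$ and $\nu\left((f_-)'\right)$ by contracting the integration contour to $z=0$ and to $z=\infty$ respectively; you merely make explicit two points the paper leaves implicit, namely that $(f_-)'$ extends holomorphically and non-vanishingly across $\infty$ (your computation in the coordinate $w=1/z$ is correct) and that precomposition with the degree-one map $f$ does not change the winding number. Your closing alternative, lifting $f$ to $\widetilde{f} \in \WDS$ and computing $f'(z) = \widetilde{f}'(x)\, e^{2\pi i (\widetilde{f}(x)-x)}$ so that periodicity of $\widetilde{f}'$ and of $\widetilde{f}(x)-x$ gives $\nu(f')=0$, is also correct and is a genuinely different, more elementary route that bypasses conformal welding entirely.
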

\begin{proof}
By Proposition~\ref{diff-decomp} we have
$$
f_+ = f_- \circ f  \, \mbox{.}
$$
Hence we have
$$
  (f_+)' = \left( (f_-)'   \circ f \right) \cdot   f'  \, \mbox{.}
$$
Therefore
$$
\nu \left((f_+)' \right) = \nu \left( (f_-)'   \circ f \right) + \nu \left(   f'  \right) = \nu \left( (f_-)' \right) +  \nu \left(   f'  \right) \, \mbox{.}
$$
Now $\nu \left((f_+)' \right) =0$, since in the definition given by formula~\eqref{wind} we can contract the integral contour to the point $z=0$. Analogously, $\nu \left( (f_-)' \right) =0$, since in the definition given by formula~\eqref{wind} we can contract the integral contour to the point $z = \infty$.
\end{proof}

\bigskip

The Lie group $\DS$ acts smoothly  on the $\C$-vector space (the Silva space) $\h = \h(S^1)$ by the rule $f \diamond h = h \circ f^{\circ -1}$, where $h \in \h$, $f \in \DS$ and  $f^{\circ -1} $ denotes the diffeomorphism which  is   inverse to the diffeomorphism  $f$. It gives the smooth action of the Lie group $\DS$ on the Lie group $\h^*$, and this action preserves the Lie subgroup $\h_0^*$.

\begin{defin}  \label{d1}
Define a Lie group
$$
\G = \h^*  \rtimes \DS     \qquad \mbox{and its Lie subgroup} \qquad \G^0 = \h_0^*  \rtimes \DS   \, \mbox{.}
$$
\end{defin}

So, the group $\g$ consists of pairs $(d,f)$, where $d \in \h^*$, $f \in \DS$, with the multiplication law
$$
(d_1, f_1) (d_2, f_2) = (d_1 \cdot (f_1 \diamond d_2), f_1  \circ f_2)  \, \mbox{.}
$$

The Lie group $\G$ acts smoothly on the $\C$-vector space (the Silva space) $\h$ by the rule $(d,f) \diamond h = d \cdot (f \diamond h)$, where  $d \in \h^*$, $f \in \DS$, $h \in \h$.

For any element $g  \in \G$ consider the block matrix
\begin{equation}  \label{block-mat}
\begin{pmatrix}
  g_{--} & g_{+-} \\
  g_{-+} & g_{++}
\end{pmatrix}
\end{equation}
for the action of $g$ on $\h$ with respect to the decomposition $\h = \h_- \oplus \h_+$ (see formula~\eqref{H-decomp}), where the linear operators
$$
g_{--} : \h_- \lrto \h_-  \, \mbox{,} \qquad g_{++} : \h_+ \lrto \h_+  \, \mbox{,}  \qquad g_{+ -} : \h_+ \lrto \h_-  \, \mbox{,}  \qquad
g_{-+} : \h_- \lrto \h_+ \, \mbox{.}
$$

\medskip

\begin{defin} \label{def-1}
For any Silva space $\mathcal V$ over the field $\C$ by $\mathop{ \rm GL}({\mathcal V})$ denote the group of all continuous  $\C$-linear automorphisms of $\mathcal V$. (Note that by Remark~\ref{Silva} it is enough to consider only bijective continuous linear maps $g : {\mathcal V} \to {\mathcal V}$, then $g^{-1}$ will be also continuous.)
\end{defin}

\medskip

\begin{Th}  \label{th-block}
For any $g \in \G^0$ the linear operators $g_{++}$ and  $g_{--}$  belongs to the groups $\mathop{ \rm GL}(\h_+)$ and $\mathop{ \rm GL}(\h_-)$ correspondingly.
\end{Th}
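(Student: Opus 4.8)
The plan is to work throughout with the action $T_g\colon h\mapsto d\cdot(h\circ f^{\circ -1})$ of $g=(d,f)\in\G^0$ on the Silva space $\h=\h(S^1)$, and to factor $T_g$ into ``triangular'' pieces — one factorization controlling $g_{++}$ and a mirror one controlling $g_{--}$. The auxiliary bookkeeping lemma is this. For Silva spaces with continuous direct sum decompositions $V=V_-\oplus V_+$ and $W=W_-\oplus W_+$, call a continuous linear isomorphism $A\colon V\to W$ \emph{of type $+$} if $A(V_+)\subseteq W_+$ and $A^{-1}(W_+)\subseteq V_+$, \emph{of type $-$} if $A(V_-)\subseteq W_-$ and $A^{-1}(W_-)\subseteq V_-$, and \emph{of type $0$} if it is of both types. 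A block-matrix computation then shows: for such $A$ the diagonal blocks $A_{--},A_{++}$ are isomorphisms; a composition of operators all of type $+$ (resp.\ all of type $-$) is again of type $+$ (resp.\ $-$); and if $A$ is of type $-$ and $B$ of type $+$ then $(A\circ B)_{++}=A_{++}\circ B_{++}$, while if $A$ is of type $+$ and $B$ of type $-$ then $(A\circ B)_{--}=A_{--}\circ B_{--}$. Since type $0$ is a special case of both, it follows that whenever $T_g=s_1\circ\dots\circ s_k$ is a composition in which, reading left to right, every factor of type $-$ precedes every factor of type $+$, then $(T_g)_{++}=g_{++}$ is an isomorphism; symmetrically, if every factor of type $+$ precedes every factor of type $-$, then $g_{--}$ is an isomorphism.

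Next I would build such factorizations from conformal welding. Apply Proposition~\ref{diff-decomp} to $f$: there are univalent holomorphic $f_+$ (on a neighbourhood of $\overline{D}_1$, with $f_+(0)=0$) and $f_-$ (on a neighbourhood of $\widehat{\C}\setminus D_1$, with $f_-(\infty)=\infty$) such that $f_+=f_-\circ f$ on $S^1$. Put $\Gamma:=f_+(S^1)=f_-(S^1)$, a closed analytic curve in $\C$ with $0\in W_{\Gamma,+}$ and $\infty\in W_{\Gamma,-}$, so that $f^{\circ -1}=f_+^{\circ -1}\circ f_-$ near $S^1$. Define $C_+\colon\h(S^1)\to\h(\Gamma)$, $C_+(h)=h\circ f_+^{\circ -1}$, and $C_-\colon\h(\Gamma)\to\h(S^1)$, $C_-(k)=k\circ f_-$; these are continuous isomorphisms (with inverses ``compose with $f_+$'', resp.\ ``compose with $f_-^{\circ -1}$''). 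Since $f_+$ maps a neighbourhood of $\overline{D}_1$ biholomorphically onto a neighbourhood of $\overline{W_{\Gamma,+}}$ fixing $0$, the operator $C_+$ is of type $+$; since $f_-$ maps a neighbourhood of $\widehat{\C}\setminus D_1$ biholomorphically onto a neighbourhood of $\overline{W_{\Gamma,-}}$ fixing $\infty$, the operator $C_-$ is of type $-$. Set $e:=d\circ f_-^{\circ -1}$; it is holomorphic and $\C^*$-valued on a neighbourhood of $\Gamma$, with winding number about $\Gamma$ equal to $\nu(d)=0$ (change of variables in the winding integral, $f_-^{\circ -1}$ being orientation preserving). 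By the analogue of Proposition~\ref{dec-funct} for $\Gamma$ (proved from Proposition~\ref{dec-gamma} exactly as Proposition~\ref{dec-funct} is proved for $S^1$) write $e=c'\cdot e_-\cdot e_+$ with $c'\in\C^*$, with $e_-$ holomorphic and $\C^*$-valued on $W_{\Gamma,-}$ and equal to $1$ at $\infty$, and with $e_+$ holomorphic and $\C^*$-valued on $W_{\Gamma,+}$. On $\h(\Gamma)$ the multiplication operators $M_{e_-},M_{c'},M_{e_+}$ are of types $-,0,+$ respectively and commute, and using $h\circ f^{\circ -1}=h\circ f_+^{\circ -1}\circ f_-$ together with $d=(d\circ f_-^{\circ -1})\circ f_-$ on $S^1$ one checks that
\[
T_g \;=\; C_-\circ M_{e_-}\circ M_{c'}\circ M_{e_+}\circ C_+ .
\]
The types in this composition are $-,-,0,+,+$, so by the lemma $g_{++}\in\GL(\h_+)$.

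For $g_{--}$ I would run the mirror construction: apply Proposition~\ref{diff-decomp} to $f^{\circ -1}$ to obtain univalent $g_+$ (near $\overline{D}_1$, $g_+(0)=0$) and $g_-$ (near $\widehat{\C}\setminus D_1$, $g_-(\infty)=\infty$) with $g_+=g_-\circ f^{\circ -1}$ on $S^1$, hence $f^{\circ -1}=g_-^{\circ -1}\circ g_+$; set $\Gamma':=g_+(S^1)=g_-(S^1)$. Then $E_-(h)=h\circ g_-^{\circ -1}$ is a type $-$ isomorphism $\h(S^1)\to\h(\Gamma')$ and $E_+(k)=k\circ g_+$ a type $+$ isomorphism $\h(\Gamma')\to\h(S^1)$, and for $\hat e:=d\circ g_+^{\circ -1}=\hat c\cdot\hat e_-\cdot\hat e_+$ (same argument, now with the ``$-$'' and ``$+$'' factors on $\Gamma'$) one gets $T_g=E_+\circ M_{\hat e_+}\circ M_{\hat c}\circ M_{\hat e_-}\circ E_-$, a composition of types $+,+,0,-,-$; by the lemma $g_{--}\in\GL(\h_-)$.

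The main obstacle is precisely this bookkeeping: the two diagonal blocks are controlled by \emph{different} orderings of the factors, and it is essential that the multiplication operators $M_{c'},M_{e_\pm}$ (resp.\ $M_{\hat c},M_{\hat e_\pm}$) pairwise commute, so that the decomposition of $e$ (resp.\ of $\hat e$) can be inserted in whichever of the two orders sorts the whole word correctly — one cannot instead first split $g$ as $(d,1)(1,f)$ and treat the factors separately, since the resulting word of types is not sorted. A secondary care-point is tracking which of $f_\pm,f_\pm^{\circ -1},g_\pm,g_\pm^{\circ -1}$ extends holomorphically over the inside, resp.\ the outside, domain, together with the normalizations $0\mapsto 0$ and $\infty\mapsto\infty$, in order to verify that $C_\pm,E_\pm,M_{e_\pm},M_{\hat e_\pm}$ really have the claimed types. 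Continuity of all operators involved between the relevant Silva spaces is automatic, each being ``compose, or multiply, by a fixed function holomorphic on a neighbourhood'', which respects the defining inductive systems of Banach spaces of holomorphic functions.
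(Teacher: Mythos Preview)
Your proof is correct and follows essentially the same approach as the paper's: factor the action of $g$ through the auxiliary curve $\Gamma$ produced by conformal welding, so that $T_g$ becomes a composition of block-triangular isomorphisms ordered so that all ``type $-$'' factors precede all ``type $+$'' factors (for $g_{++}$), and the mirror ordering (via welding of $f^{\circ -1}$) for $g_{--}$. Your ``bookkeeping lemma'' just packages what the paper does directly with block matrices, and your choice to transport all of $d$ to $\Gamma$ at once (as $e=d\circ f_-^{\circ -1}$) is a minor streamlining of the paper's two-step decomposition $d=d_-d_+$ on $S^1$ followed by decomposing $w=d_+\circ f_-^{\circ -1}$ on $\Gamma$; the resulting five-factor compositions are the same in substance.
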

\begin{proof}
First, we prove the statement of the theorem about $g_{++}$.

Let $g = (d , f)$,
where $d  \in \h_0^*$ and $f \in \DS$.

By Proposition~\ref{dec-funct}, we have $d = d_- d_+ $, where $d_{-} \in \C^* \times \h_{-,1}^*$  and $d_+ \in \h_{+,1}^*$.
With respect to the decomposition $\h = \h_- \oplus \h_+$ the element $d_-$ acts on $\h$ via the block matrix  of type
$
\left(
\begin{matrix}
  * & * \\
  0 & *
\end{matrix}
\right)
$.
And the linear operators on the diagonal in this block matrix are invertible.

By Proposition~\ref{diff-decomp}, we have $f = (f_-)^{\circ -1}   \circ f_+$, where $(f_-)^{\circ -1} $ means the inverse of $f_-$. Besides, $f_+(S^1) = f_-(S^1) = \Gamma\subset \C$.
With respect to the decompositions $$\h = \h_- \oplus \h_+  \qquad   \mbox{and}  \qquad \h(\Gamma) = \h_- (\Gamma) \oplus \h_+ (\Gamma)$$
 (see Proposition~\ref{dec-gamma})
the element $(f_-)^{\circ -1}$ acts from $\h(\Gamma)$  to  $\h$  (by the rule $s \mapsto s \circ f_-$) via the block matrix  of type
$
\left(
\begin{matrix}
  * & * \\
  0 & *
\end{matrix}
\right)
$, and the element $f_+$ acts from $\h$  to $\h(\Gamma)$  (by the rule $h \mapsto h \circ (f_+)^{\circ -1}$) via the block matrix  of type~$
\left(
\begin{matrix}
  * & 0 \\
  * & *
\end{matrix}
\right)
$.
And the linear operators on the diagonal in these block matrices are invertible.

Now  consider the element $w =d_+ \circ (f_-)^{\circ -1}$ from the group of invertible elements of the ring $\h(\Gamma)$. Since the winding number $\nu(d_+) =0$, there is a branch
$\log w$, which belongs to  $\h(\Gamma)$. By Proposition~\ref{dec-gamma}   we have decomposition $\log w = r_- + r_+$. Now $w =   w_- w_+ $, where $w_+ = \exp(r_+)$ is the holomorphic invertible function inside of $\Gamma$ (and in the neighborhood of $\Gamma$), and $w_- =\exp(r_-)$ is
the holomorphic invertible function outside of $\Gamma$ (and in the neighborhood of $\Gamma$) in $\widehat{\C}$, compare with the proof of Proposition~\ref{dec-funct}.

With respect to the decomposition $\h(\Gamma) = \h_-(\Gamma) \oplus \h_+(\Gamma)$ the element $w_-$ acts on $\h(\Gamma)$ (by multiplication) via the block matrix  of type
$
\left(
\begin{matrix}
  * & * \\
  0 & *
\end{matrix}
\right)
$, and the element $w_+$ acts on $\h(\Gamma)$ (by multiplication) via the block matrix  of type~$
\left(
\begin{matrix}
  * & 0 \\
  * & *
\end{matrix}
\right)
$.
And the linear operators on the diagonal in these block matrices are invertible.

It is easy to see that the action of element $g$ defines the operator, which is the composition of the following linear operators:
\quash{$$
\h  \xlongrightarrow{f_+} \h(\Gamma)  \xlrto{\times w_+}  \h(\Gamma)  \xlrto{\times w_-}  \h(\Gamma)  \xlrto{(f_-)^{\circ -1}}  \h  \xlrto{\times d_-}  \h
$$}
\begin{equation} \label{arr}
\xymatrix{
\h  \ar[rr]^{f_+} && \h(\Gamma)  \ar[rr]^{\times w_+} &&  \h(\Gamma)  \ar[rr]^{\times w_-} && \h(\Gamma)  \ar[rr]^{(f_-)^{\circ -1}} && \h  \ar[rr]^{\times d_-} && \h  \, \mbox{,} }
\end{equation}
where $\times$ means the operator of multiplication by the corresponding function.

The linear operators given by the first two arrows in~\eqref{arr} are given by block matrices of type
$
\left(
\begin{matrix}
  * & 0 \\
  * & *
\end{matrix}
\right)
$ with invertible operators on the diagonal. Therefore their composition will be again the matrix of such type.

The linear operators given by the last three arrows in~\eqref{arr} are given by block matrices of type
$
\left(
\begin{matrix}
  * & * \\
  0 & *
\end{matrix}
\right)
$ with invertible operators on the diagonal. Therefore their composition will be again the matrix of such type.

Therefore we obtain that the linear operator given by the action of   $g$ is written as the product of invertible linear operators of type
$$
\left(
\begin{matrix}
  * & * \\
  0 & *
\end{matrix}
\right)
\left(
\begin{matrix}
  * & 0 \\
  * & *
\end{matrix}
\right)
$$
and hence $g_{++}$ is an invertible operator.

Since the topology on $\h$ is the product topology from $\h_-$ and $\h_+$ (see Section~\ref{unit_circle}), the operator $g_{++}$ is continuous. Hence $g_{++}$ belongs to $\mathop{ \rm GL}(\h_+)$.

The proof that $g_{--} \in \mathop{ \rm GL}(\h_-)$ is analogous. But one has to take $d = d_+ d_-$ and $f^{\circ -1} = ((f^{\circ -1})_-)^{\circ -1}  \circ (f^{\circ -1})_+  $,
hence $f = ((f^{\circ -1})_+)^{\circ -1}  \circ (f^{\circ -1})_-$. Then by similar reasonings as above it is possible to write the operator for the action of $g$ as the composition of five invertible linear operators similar to~\eqref{arr}, but
where the first two operators will be given by the matrices of type  $
\left(
\begin{matrix}
  * & * \\
  0 & *
\end{matrix}
\right)
$
and the last three operators will be given by the matrices of type  $
\left(
\begin{matrix}
  * & 0 \\
  * & *
\end{matrix}
\right)
$. Therefore, at the end,  the linear operator given by the action of $g$ can be written as the product of invertible linear operators of type
$$
\left(
\begin{matrix}
  * & 0 \\
  * & *
\end{matrix}
\right)
\left(
\begin{matrix}
  * & * \\
  0 & *
\end{matrix}
\right)
$$
and hence $g_{--}$ is an invertible operator.

\end{proof}

\section{Spaces of continuous linear operators $\Hom(\cdot , \cdot)$ and  ${\Homb}(\cdot ,  \cdot ) $}
\label{sect-cont}
\subsection{Continuous linear operators}

\begin{defin}
For any two Silva spaces ${\mathcal V}_1$ and ${\mathcal V}_2$ over the field $\C$ by $\Hom({\mathcal V}_1 , {\mathcal V}_2)$ denote the $\C$-vector space of continuous linear operators from ${\mathcal V}_1$ to ${\mathcal V}_2$.
\end{defin}

\begin{prop}   \label{holom}
\label{oper}
There is the following explicit description of spaces of linear continuous operators.
\begin{enumerate}
\item  \label{it-1} The space $\Hom(\h_+, \h_-)$ is identified with the space of  functions $f(z, w)$ which are holomorphic on an open set (depending on $f$) of $\widehat{\C} \times \widehat{\C}$ containing $(\widehat{\C} \setminus {D}_1) \times (\widehat{\C} \setminus \overline{D}_1) $ and vanishing on
$(\infty \times \widehat{\C})  \cup (\widehat{\C} \times \infty)$. The action is
\begin{equation}  \label{emb1}
h(z)  \longmapsto  \oint_{| w | = 1+ \ve}   f(z, w) h(w) dw  \, \mbox{,}
\end{equation}
where a real number $\ve > 0$ is small enough such that the function   $h(w)$ is holomorphic in a neighbourhood of  the circle $| w | = 1+ \ve$ (the result does not depend on the choice of~$\ve$).
\item  \label{it-2}
The space $\Hom(\h_-, \h_+)$ is identified with the space functions $f(z, w)$ which are holomorphic on an open set (depending on $f$) of ${\C} \times {\C}$ containing $\overline{D}_1 \times  {D}_1 $. The action is
$$
h(z)  \longmapsto  \oint_{| w | = 1- \ve}   f(z, w) h(w) dw  \, \mbox{,}
$$
where a real number $\ve > 0$ is small enough such that the function   $h(w)$ is holomorphic in a neighbourhood of  the circle $ | w | = 1- \ve$ (the result does not depend on the choice of~$\ve$).
\item  \label{it-3}  The space $\Hom(\h_+, \h_+)$ is identified with the space of functions $f(z, w)$ which are  holomorphic  on an open set (depending on $f$) of ${\C} \times \widehat{\C}$ containing $\overline{D}_1 \times (\widehat{\C} \setminus \overline{D}_1) $ and vanishing on
${\C} \times \infty$. The action is
$$
h(z)  \longmapsto  \oint_{| w | = 1+ \ve}   f(z, w) h(w) dw  \, \mbox{,}
$$
where a real number $\ve > 0$ is small enough such that the function   $h(w)$ is holomorphic in a neighbourhood of  the circle $| w | = 1+ \ve$ (the result does not depend on the choice of~$\ve$).
\item The space $\Hom(\h_-, \h_-)$ is identified with the space of functions $f(z, w)$ which are holomorphic on an open set (depending on $f$) of $\widehat{\C} \times \C$ containing $(\widehat{\C} \setminus {D}_1) \times D_1 $ and vanishing on
$\infty \times \C$. The action is
$$
h(z)  \longmapsto  \oint_{| w | = 1- \ve}   f(z, w) h(w) dw  \, \mbox{,}
$$
where a real number $\ve > 0$ is small enough such that the function   $h(w)$ is holomorphic in a neighbourhood of  the circle $| w | = 1- \ve$ (the result does not depend on the choice of~$\ve$).
\end{enumerate}
\end{prop}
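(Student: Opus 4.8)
\emph{Proof idea.} I would prove item~\ref{it-3} in full and then indicate that items~\ref{it-1}, \ref{it-2} and the fourth one follow by the same argument after the obvious replacements ($\overline{D}_1 \leftrightarrow \widehat{\C}\setminus D_1$, changing the locus on which the kernel is required to vanish, and using the reproducing circle $|w|=1+\ve$ or $|w|=1-\ve$ according to whether the source space is $\h_+$ or $\h_-$). Write $C_n=\tilde{\h}\left(D_{1+1/n}\right)$, so that $\h_+=\varinjlim_n C_n$, and let me freely use the properties of Silva spaces recalled in Remark~\ref{Silva}: a bounded subset of $\h_+$ is already a bounded subset of some $C_n$; on bounded sets the inclusion $C_n\hookrightarrow\h_+$ is a topological embedding; a holomorphic map into $\h_+$ which is locally bounded is locally holomorphic into one of the Banach steps $C_m$; and a continuous operator sends bounded sets to bounded sets.

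The plan for the forward direction (operator $\mapsto$ kernel) is to test $T$ against Cauchy kernels. For $|w|>1$ put $k_w(u)=\frac{1}{2\pi i(w-u)}$; then $k_w\in\h_+$ since it is holomorphic on $D_{|w|}\supset\overline{D}_1$, the assignment $w\mapsto k_w$ is holomorphic from $\{|w|>1\}$ to $\h_+$ (near $w_0$ it is the geometric series $\sum_{j\ge 0}(w_0-w)^j(w_0-\cdot)^{-j-1}$, convergent in a Banach step $C_n$ with $1+1/n<|w_0|$), and $k_w\to 0$ in $\h_+$ as $w\to\infty$. I would then define $f_T(z,w)=(Tk_w)(z)$ and analyse its domain of holomorphy. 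For fixed $\delta>0$, $R>1+\delta$ the set $\{k_w:1+\delta\le|w|\le R\}$ is bounded in $\h_+$, hence bounded in some $C_N$; applying $T$, the set $\{Tk_w:1+\delta\le|w|\le R\}$ is bounded in $\h_+$, hence contained in some $C_M$, so $z\mapsto f_T(z,w)$ is holomorphic on $D_{1+1/M}$ uniformly in $w$ over this annulus, and moreover $w\mapsto Tk_w$ is holomorphic into the Banach space $C_M$ there. Since evaluation $(z,g)\mapsto g(z)$ is holomorphic on $D_{1+1/M}\times C_M$, the function $f_T$ is holomorphic on $D_{1+1/M}\times\{1+\delta<|w|<R\}$ (equivalently, it is separately holomorphic there, whence jointly holomorphic by Hartogs). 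Letting $\delta\downarrow 0$, $R\uparrow\infty$, and noting that for $|z|\le 1$ the bounded holomorphic function $w\mapsto f_T(z,w)$ extends across $w=\infty$ with value $0$, one obtains that $f_T$ is holomorphic on an open subset of $\C\times\widehat{\C}$ containing $\overline{D}_1\times(\widehat{\C}\setminus\overline{D}_1)$ and vanishing on $\C\times\infty$.

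For the integral formula and the converse, take $h\in\h_+$, say holomorphic on $D_{1+1/N}$, and $0<\ve<1/N$; Cauchy's formula reads $h=\oint_{|w|=1+\ve}h(w)k_w\,dw$, the integral converging in $\h_+$ because $w\mapsto h(w)k_w$ is a continuous $\h_+$-valued function on the compact contour, so its Riemann sums converge. Applying $T$ and interchanging it with the integral yields $(Th)(z)=\oint_{|w|=1+\ve}f_T(z,w)h(w)\,dw$, which is formula~\eqref{emb1} for item~\ref{it-3}. Conversely, given $f$ holomorphic on an open $U\supseteq\overline{D}_1\times(\widehat{\C}\setminus\overline{D}_1)$ and vanishing on $\C\times\infty$, I let $T_f$ be the operator defined by the same formula; for $h\in C_n$ and $0<\ve<1/n$, compactness of $\overline{D}_1\times\{|w|=1+\ve\}\subset U$ gives $\veps'>0$ with $D_{1+\veps'}\times\{\,||w|-1-\ve|<\veps'\}\subseteq U$, so $T_f h$ is holomorphic on $D_{1+\veps'}$ with $\sup_{\overline{D}_{1+\veps'/2}}|T_fh|\le 2\pi(1+\ve)\left(\sup|f|\right)\|h\|_{C_n}$, hence $T_f|_{C_n}$ is continuous into some $C_m$ and $T_f\in\Hom(\h_+,\h_+)$. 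Finally $T\mapsto f_T$ and $f\mapsto T_f$ are mutually inverse: $T_{f_T}=T$ by the integral formula just derived, while $f_{T_f}=f$ (and injectivity of $f\mapsto T_f$) follows from $\oint_{|w|=1+\ve}f(z,w)k_\zeta(w)\,dw=f(z,\zeta)$ for $|\zeta|>1+\ve$, obtained by Laurent-expanding $f(z,\cdot)$ on $\{|w|>1\}$ and retaining only the $w^{-1}$ term.

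For item~\ref{it-1} one runs the same argument with the kernels $k_w$, $|w|>1$, but now $Tk_w\in\h_-$, which is holomorphic outside $\overline{D}_1$ and vanishes at $z=\infty$, producing the domain containing $(\widehat{\C}\setminus D_1)\times(\widehat{\C}\setminus\overline{D}_1)$ and the vanishing on $(\infty\times\widehat{\C})\cup(\widehat{\C}\times\infty)$; for the two items with source $\h_-$ one uses instead the reproducing kernels $k_w(u)=\frac{1}{2\pi i(w-u)}$ with $|w|<1$ (these lie in $\h_-$) and the contour $|w|=1-\ve$, with the analogous identity $\oint_{|w|=1-\ve}\phi(w)k_w\,dw=-\phi$ for $\phi\in\h_-$. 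A more structural alternative is available: by the Grothendieck--K\"othe--Sebasti\~ao e Silva duality of Section~\ref{Grot-dual} together with reflexivity, $\Hom(\h_+,\h_-)\cong\Hom(\h(D_1),\h_0(\widehat{\C}\setminus\overline{D}_1))$ via transposition, and since these Fr\'echet spaces are nuclear the Schwartz kernel theorem identifies this with $\h_-\mathbin{\widehat{\otimes}}\h_0(\widehat{\C}\setminus\overline{D}_1)$, which by Grothendieck's isomorphism $\h(\Omega_1)\mathbin{\widehat{\otimes}}\h(\Omega_2)\cong\h(\Omega_1\times\Omega_2)$ is exactly the stated space of two-variable holomorphic functions; the direct argument has the advantage of also pinning down the explicit action~\eqref{emb1}. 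I expect the main obstacle to be the second paragraph above, namely controlling the domain of joint holomorphy of $f_T$: this is where the Silva-space facts (bounded sets come from a single Banach step; locally bounded holomorphic curves into $\varinjlim C_m$ are locally holomorphic into some $C_m$) must be combined with Hartogs' theorem (or with holomorphy of the evaluation pairing), followed by the bookkeeping showing that the union over $\delta$ and $R$ of the regions $D_{1+1/M(\delta,R)}\times\{1+\delta<|w|<R\}$, together with the fibres over $w=\infty$, does cover $\overline{D}_1\times(\widehat{\C}\setminus\overline{D}_1)$.
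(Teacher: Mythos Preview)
Your proof is correct and follows essentially the same route as the paper. The paper proves item~\ref{it-1} (rather than item~\ref{it-3}) and states the remaining cases are analogous; it defines the kernel via $f_k=T(z^k)$ and $f(z,w)=\frac{1}{2\pi i}\sum_{k\ge 0}f_k(z)w^{-k-1}$, then (i) for fixed $z=a$ invokes the Grothendieck--K\"othe--Sebasti\~ao e Silva duality to identify the functional $h\mapsto T(h)(a)$ with a holomorphic function of $w$, (ii) for $|b|>r>1$ observes that the family $g_b(z)=\sum_{k\ge 0}b^{-k-1}z^k$ is bounded in a Banach step, so $T(g_b)$ lands in a single Banach step of the target, giving holomorphy in $z$ on a uniform disk, and (iii) concludes joint holomorphy via Hartogs' theorem on separate analyticity. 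Since your Cauchy kernel satisfies $k_w=\frac{1}{2\pi i}\sum_{k\ge 0}u^kw^{-k-1}=\frac{1}{2\pi i}g_w$, your $f_T(z,w)=(Tk_w)(z)$ is exactly the paper's $f(z,w)$, and your bounded-set argument matches the paper's step (ii); the only cosmetic difference is that where the paper appeals to the duality pairing~\eqref{exp-pair} for holomorphy in $w$, you obtain it from the holomorphy of $w\mapsto k_w$ into $\h_+$ combined with continuity of $T$ and of point evaluation.
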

\begin{proof}
The proof is based on the Grothendieck--K\"othe--Sebasti\~ao e Silva duality from Section~\ref{Grot-dual} and on the generalized Hartogs' theorem on separate analyticity (or, in \linebreak other words, on  Hartogs' fundamental theorem). We will give the proof of item~\ref{it-1} of the proposition. The other items are proved analogously.

It is easy to see that formula~\eqref{emb1} gives the map of the space of corresponding holomorphic functions to the space  $\Hom(\h_+, \h_-)$. We construct the inverse map to this map.

Suppose  $T \in \Hom(\h_+, \h_-)$. For every integer $k \ge 0 $ we consider  $f_k = T(z^k) $. Then the function $f_k(z)$ is from $\h_-$.

Now the function
\begin{equation}\label{fun_f}
f(z,w) = \frac{1}{2 \pi i} \left( \sum_{k \ge 0} f_k(z) w^{- k -1}  \right)
\end{equation}
will be the function that we need for formula~\eqref{emb1}. We will prove it.

First, we check that the function $f(z,w)$ given by series~\eqref{fun_f} is defined for any ${(z, w) \in (\widehat{\C} \setminus {D}_1) \times (\widehat{\C} \setminus \overline{D}_1)  }$  and also that $f(z,w)$ is a holomorphic function with respect to the variable $w \in \widehat{\C} \setminus \overline{D}_1$ under the fixed variable  $z = a \in \widehat{\C} \setminus {D}_1 $.

Indeed, consider the continuous linear functional $\chi_{a}$ from $\h_+$ to $\C$ given as  ${\chi_{a}(h)= T(h)(a)}$. Then by Section~\ref{Grot-dual}, the functional $\chi_{a}$ comes from the holomorphic function
$g(z) = \sum_{k \ge 0} c_k z^{- k -1}$ on $\widehat{\C} \setminus \overline{D}_1$. Calculating the functional  $\chi_{a}$ on various functions $z^k$, where $k \ge 0$, we obtain that $c_k = f_k(a) / (2 \pi i)$. Therefore
the function
\begin{equation}  \label{form_f}
f(a, w) = \sum_{k \ge 0} c_k w^{- k -1}
\end{equation}
is holomorphic with respect to the variable $w$  from  $\widehat{\C} \setminus \overline{D}_1$.

Second, we fix $s $ and  $r$  from $\dr$  such that $1 < s < r$. Consider any fixed $b \in \C$ such that $| b | > r$.
Consider the function (that depends on $b$) of the variable $z$
$$
g_{b}(z)= \sum_{k \ge 0}  b^{-k -1} z^k  \, \mbox{.}
$$

Consider the set $M_r$ that consists of all functions $g_b(z)$ with $| b | > r$. It is easy to see by direct calculation  that the set $M_r$   is a bounded subset in the Banach space
$\tilde{\h} \left( D_{s}  \right)$, where this Banach space  consists of functions from $\h \left( D_{s}  \right)$ which can be extended to continuous functions on the closed  disk $\overline{D}_s$.
Therefore the set $M_r$ is a bounded subset in the space $\h_+$.

Hence the set $T(M_r)$ is a bounded subset in the space $\h_-$. Hence and by Remark~\ref{Silva} there is  an integer $n > 0$ such that
$$
T(W_r)  \; \subset \; \tilde{\h}_0 \left(\widehat{\C} \setminus \overline{D}_{1 - 1/n} \right)  \, \mbox{,}
$$
see formula~\eqref{indlim} for the notation.

Besides, for any function $g_b(z)$ from the set $M_r$  we have in $\h_-$ the following equalities
$$
T(  g_b(z)) = T \left( \lim_{l \to \infty} \sum_{k \ge 0}^l  b^{-k -1} z^k  \right) =  \lim_{l \to \infty}  T \left( \sum_{k \ge 0}^l  b^{-k -1} z^k  \right) =  \lim_{l \to \infty} \sum_{k \ge 0}^l  f_k(z) b^{-k -1} = f(z,b)  \, \mbox{.}
$$
We put also $f(z, \infty)=0$ (that corresponds to formula~\eqref{form_f} in this case).

Thus, we proved that the function $f(z,w)$ given by series~\eqref{fun_f}  is   defined on
an open subset $U$  of $\widehat{\C} \times \widehat{\C}$ containing $(\widehat{\C} \setminus {D}_1) \times (\widehat{\C} \setminus \overline{D}_1) $ and this function vanishes on
${(\infty \times \widehat{\C})  \cup (\widehat{\C} \times \infty)}$. Besides, $f(z,w)$ is a holomorphic function of one variable $w$ under fixed $z = a \in \widehat{\C} \setminus {D}_1 $  such that $(a,w) \in U$, and $f(z,w)$ is a holomorphic function of one variable $z$  under fixed
$w = b \in \widehat{\C} \setminus \overline{D}_1$ such that $(z,b) \in U$.

Hence, by the generalized Hartogs' theorem on separate analyticity (see~\cite{H} and the exposition in the textbook~\cite[Chapter~7, \S~5]{BM}) the function $f(z,w)$ is a holomorphic function on $U$.

\end{proof}

\begin{nt}  \em  \label{Groth}
Suppose the Silva spaces are given as
\begin{equation}
\label{lim}
 {\mathcal V_1} = \varinjlim_{n \in \mathbb{N}} A_{n}      \qquad  \mbox{and} \qquad  {\mathcal V_2}  = \varinjlim_{m \in \mathbb{N}} C_{m}  \, \mbox{,}
\end{equation}
 and a linear operator $T$ belongs to  $\Hom ({\mathcal V_1}, {\mathcal V_2})$. Then for any $n \in \mathbb{N}$  there is $m \in \mathbb{N}$ such that
 $$T(A_n) \, \subset  \, C_m      \qquad    \mbox{and}  \qquad   T |_{A_n}   \, \in  \, \Hom (A_n, C_m)   \, \mbox{,} $$
 see~\cite[\S~3.9, \S~5.5]{F}   (this is the result of A.~Grothendieck, see~\cite[Introduction~IV.4]{Gro0}).

 Note that in case when ${\mathcal V_1}$ and ${\mathcal V_2}$ belong to  the set
  $\left\{\h_- ,  \h_+ , \h \right\}$, this statement easily follows from Proposition~\ref{holom}. Indeed, for example, in formula~\eqref{emb1},   if the variable $w$  belongs the  circle $| w | = 1 + \ve$, where a real number $\ve > 0$ is  fixed,  then the function  $f(z,w)$ (and hence the function in the left hand side of formula~\eqref{emb1}) will be a holomorphic function with respect to the variable  $z \in  \widehat{\C} \setminus \overline{D}_{1 - \delta}$, where a real number $\delta > 0$ depends on $\ve$, since the circle $| w | = 1 + \ve$ is compact.
\end{nt}

\begin{defin}
For any two Silva spaces ${\mathcal V_1}$ and ${\mathcal V_2}$ with presentations by inductive limits as in formula~\eqref{lim}, by ${\Homb}  ({\mathcal V_1}, {\mathcal V_2})$
denote the linear subspace of operators $T$ from ${\Hom}  ({\mathcal V_1}, {\mathcal V_2})$ such that $T$ belongs to $\Hom({\mathcal V_1}, C_m)$, where  $m \in {\mathbb N}$ (and depends on~$T$).
\end{defin}

\begin{nt} \label{Groth2}  \em
For any two Silva spaces ${\mathcal V_1}$ and ${\mathcal V_2}$ with presentations by inductive limits as in~\eqref{lim}, it follows
from Remark~\ref{Groth} that
$$
{\Homb}  ({\mathcal V_1}, {\mathcal V_2})  = \varinjlim_{m \in \mathbb{N}}  \varprojlim_{n \in \mathbb{N}}  \Hom(A_n, C_m)   \; \,
\subset \; \,
\varprojlim_{n \in \mathbb{N}} \varinjlim_{m \in \mathbb{N}}  \Hom(A_n, C_m) = {\Hom}  ({\mathcal V_1}, {\mathcal V_2})   \, \mbox{.}
$$
Thus, from this point of view, the definition of ${\Homb}  ({\mathcal V_1}, {\mathcal V_2})$ is just the interchanging  the order of (projective and inductive) limits $\varprojlim$
and  $\varinjlim$ in the definition of ${\Hom}  ({\mathcal V_1}, {\mathcal V_2})$.
 \end{nt}

\medskip

From Remark~\ref{Groth} it follows that
\begin{equation}  \label{inv-comp}
\Hom  ({\mathcal V_1}, {\mathcal V_2}) \times  {\Homb}  ({\mathcal V_2}, {\mathcal V_3})  \times   {\Hom}  ({\mathcal V_3}, {\mathcal V_4}) \; \, \subset  \; \,  {\Homb}  ({\mathcal V_1}, {\mathcal V_4})
\end{equation}
for any four Silva spaces $\mathcal V_i$, $1 \le i \le 4$.

\begin{prop}  \label{tilde-holom}
An operator  $T$ from  ${\Hom}  ({\mathcal \h_+}, {\mathcal \h_+})$ belongs to  ${\Homb}  ({\mathcal \h_+}, {\mathcal \h_+})$ if and only if there is a real number $\delta > 0$  (which  depends on $T$) such that  the function $f(z,w)$ corresponding to $T $  by item~\ref{it-3} of Proposition~\ref{holom}   will be holomorphic on an open set
 ${D_{1+ \delta} \times (\widehat{\C} \setminus \overline{D}_1) }$.

 The analogous property (in view of Proposition~\ref{holom}) is valid for any $T$ from \linebreak ${\Homb}  ({\mathcal V_1}, {\mathcal V_2})$, where ${\mathcal V_1}$ and ${\mathcal V_2}$ belong to the set
  $\left\{\h_- ,  \h_+  \right\}$.
\end{prop}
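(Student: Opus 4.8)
\emph{Plan.} The starting point is the ``kernel'' description from Proposition~\ref{holom}(\ref{it-3}): the function $f(z,w)$ attached to an operator $T\in\Hom(\h_+,\h_+)$ is recovered, exactly as in the proof of Proposition~\ref{holom}, from its ``matrix coefficients'' $f_k\eqdef T(z^k)\in\h_+$ by the series
$$
f(z,w)\;=\;\frac{1}{2\pi i}\sum_{k\ge 0}f_k(z)\,w^{-k-1}\,,
$$
valid as a Laurent expansion in $w$ for $|w|>1$. On the other side, by Remark~\ref{Groth2} one has $T\in\Homb(\h_+,\h_+)$ if and only if there is $m\in\N$ such that $T(\h_+)\subset C_m$ and $T\colon\h_+\to C_m$ is continuous, where $C_m=\tilde\h(D_{1+1/m})$ is the Banach space of functions holomorphic on $D_{1+1/m}$ and continuous on $\overline D_{1+1/m}$, with the supremum norm. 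The plan is to match these two conditions, with $\delta$ of the order of $1/m$.

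\emph{The ``if'' direction.} Suppose $f(z,w)$ extends to a holomorphic function on $D_{1+\delta}\times(\widehat\C\setminus\overline D_1)$, and fix once and for all an $m$ with $1/m<\delta/2$. Given $n\in\N$ and $h\in C_n$, pick $0<\ve<1/n$; the circle $|w|=1+\ve$ is compact and lies in $\widehat\C\setminus\overline D_1$, so $z\mapsto\oint_{|w|=1+\ve}f(z,w)h(w)\,dw$ is holomorphic in $z$ on $D_{1+\delta}$ (differentiation under the integral sign together with Morera's theorem), and for $|z|\le 1+1/m$ it is bounded by $2\pi(1+\ve)\,\bigl(\max_{|z|\le 1+1/m,\ |w|=1+\ve}|f(z,w)|\bigr)\,\|h\|_{C_n}$. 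Hence $T(h)\in C_m$ and $\|T(h)\|_{C_m}\le\mathrm{const}(n,\ve)\,\|h\|_{C_n}$, so $T|_{C_n}\in\Hom(C_n,C_m)$ for every $n$ with a single $m$; by Remark~\ref{Groth2} this means $T\in\Homb(\h_+,\h_+)$.

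\emph{The ``only if'' direction.} Now suppose $T(\h_+)\subset C_m$ with $T\colon\h_+\to C_m$ continuous. Since $\{z^k\}_{k\ge 0}$ is not bounded in $\h_+$, one cannot estimate $f_k=T(z^k)$ directly; instead one uses that for each fixed $\rho\in(0,1)$ the family $\{\rho^k z^k:k\ge 0\}$ is bounded in $\h_+$. Indeed, choosing $n$ with $1+1/n\le\rho^{-1}$ gives $\sup_{\overline D_{1+1/n}}|\rho^k z^k|\le 1$ for all $k$, so this family lies in the unit ball of $C_n$, hence is bounded in $\h_+$ by Remark~\ref{Silva}. As $T\colon\h_+\to C_m$ is continuous it maps bounded sets to bounded sets, so $\{\rho^k f_k\}$ is bounded in $C_m$: there is $M_\rho>0$ with $\sup_{\overline D_{1+1/m}}|f_k|\le M_\rho\,\rho^{-k}$ for all $k\ge 0$. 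Substituting this into the series for $f(z,w)$, for $|z|\le 1+1/m$ and $|w|>\rho^{-1}$ the series is dominated by a convergent geometric series and converges locally uniformly, hence defines a holomorphic function of $(z,w)$ on $D_{1+1/m}\times\{|w|>\rho^{-1}\}$ (with value $0$ at $w=\infty$) extending $f$. Letting $\rho\to 1^-$ gives that $f$ is holomorphic on $D_{1+1/m}\times(\widehat\C\setminus\overline D_1)$, which is the claimed statement with $\delta=1/m$. The last assertion of the proposition, for $T\in\Homb({\mathcal V}_1,{\mathcal V}_2)$ with ${\mathcal V}_1,{\mathcal V}_2\in\{\h_-,\h_+\}$, follows by the same argument, using the corresponding items of Proposition~\ref{holom} and the substitution $z\mapsto z^{-1}$ where needed.

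\emph{Main obstacle.} The crux is the ``only if'' part: one must convert the purely qualitative fact that $T$ lands in a fixed Banach space $C_m$ into a growth bound on the coefficients $T(z^k)$. The device of replacing the unbounded set $\{z^k\}$ by the bounded sets $\{\rho^k z^k\}$ and then letting $\rho\to 1^-$ is precisely what supplies this, and the limit $\rho\to 1$ just barely yields holomorphy on all of $D_{1+1/m}\times(\widehat\C\setminus\overline D_1)$ rather than on a strictly smaller polydomain.
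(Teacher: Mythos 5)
Your argument is correct, but it reaches the ``only if'' half by a genuinely different route than the paper. The paper disposes of that direction by re-running the proof of Proposition~\ref{holom} verbatim: for $T\in\Hom(\h_+,C_m)$ the evaluation functionals $h\mapsto T(h)(a)$ with $a\in D_{1+1/m}$ are continuous on $\h_+$, so the Grothendieck--K\"othe--Sebasti\~ao e Silva duality yields holomorphy of $f(a,\cdot)$ on $\widehat{\C}\setminus\overline{D}_1$ for each fixed $a$, while $f(\cdot,b)=T(g_b)$, with $g_b(z)=\sum_{k\ge 0}b^{-k-1}z^k$, lies in the fixed Banach space $C_m$ and hence is holomorphic on $D_{1+1/m}$ for each fixed $b$ (this fixed target is exactly why the paper notes that the bounded-subset reasoning of the earlier proof can be omitted); joint holomorphy on $D_{1+1/m}\times(\widehat{\C}\setminus\overline{D}_1)$ is then supplied by the generalized Hartogs theorem on separate analyticity. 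You instead derive the quantitative coefficient bound $\sup_{\overline{D}_{1+1/m}}|f_k|\le M_\rho\,\rho^{-k}$ from the boundedness of the families $\{\rho^k z^k\}$ together with the fact that the continuous operator $T\colon\h_+\to C_m$ maps bounded sets to bounded sets, and you get joint holomorphy directly from the locally uniform convergence of the double series, letting $\rho\to 1^-$. So, somewhat ironically, you reinstate the bounded-subset device that the paper discards, but in exchange you avoid both the duality and Hartogs' theorem: your proof is more elementary and self-contained and makes the relation $\delta=1/m$ explicit, whereas the paper's is shorter because it can simply point to the machinery already assembled for Proposition~\ref{holom}. Your careful verification of the ``if'' direction (which the paper calls evident) and your treatment of the remaining pairs $({\mathcal V}_1,{\mathcal V}_2)$ are consistent with the paper.
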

\begin{proof}
It is evident that if an operator $T$ has  the function $f(z,w)$ as in the condition of the  proposition, then $T$ belongs to  ${\Homb}  ({\mathcal \h_+}, {\mathcal \h_+})$.

The proof in the opposite direction is just the repetition of the proof of Proposition~\ref{holom}, where we used  the Grothendieck--K\"othe--Sebasti\~ao e Silva duality   and the generalized Hartogs' theorem on separate analyticity (moreover, in the second part of this proof the reasonings about bounded subsets can be omitted, since we consider ${\Homb}(\cdot, \cdot)$).
\end{proof}

\subsection{Compositions, convolutions,  traces, determinants and inverse operators}  \label{big-trace}
\subsubsection{Composition and convolution}
Consider two continuous linear operators
$$T  \;  :  \;  {\mathcal V_1}  \lrto {\mathcal V_2}
\qquad \mbox{and}  \qquad
P \;  :  \;  {\mathcal V_2}  \lrto {\mathcal V_3}  \, \mbox{,}
$$
where $ {\mathcal V_1}$, ${\mathcal V_2}$ and ${\mathcal V_3}$ belong to the set  $\left\{\h_- ,  \h_+ \right\}$.
Then the composition $P \circ T$ can be written, in view of Proposition~~\ref{oper}, as the convolution (see also the description in similar situation in~\cite[p.~11]{ADKP}).

For example, consider ${\mathcal V_1} = {\mathcal V_3} = \h_+$ and
 ${\mathcal V_2} = \h_-$. Let the operator  $T$ correspond to the function $t$ and the operator $P$ correspond to the function $p$ as in  Proposition~\ref{oper}. Then the operator
 $P \circ T \; : \; \h_+  \to \h_+$ corresponds to the function $p \ast t $, where $p \ast t$ is the convolution
 \begin{equation}  \label{conv}
 (p \ast t) (z, w) = \oint_{| v | = 1 - \ve_w } p(z,v) t(v,w) dv  \, \mbox{,}
 \end{equation}
 where a real number $\ve_w >0$ depends on $w$ (and on the function $t$) and is small enough such that the function $t(v,w)$ is holomorphic in the neighbourhood of the circle ${ | v | = 1 - \ve_w }$ under fixed $w$ (the result does not depend on the choice of such $\ve_w$).

 This description of compositions of linear operators as convolutions of holomorphic functions allows to write the composition of linear continuous operators from $\h$ to $\h$. For this goal we have to use
  the product of block matrices  with respect to the decomposition $\h = \h_- \oplus \h_+$, and the product of separate blocks
 is written by convolutions of the corresponding functions.

\subsubsection{Trace}  \label{trace}

Suppose that $T$ belongs to  ${\Homb}  ({\mathcal \h_+}, {\mathcal \h_+})$.
Let  $f(z,w)$ be the function that corresponds to $T$ via item~\ref{it-3} of Proposition~\ref{holom} and Proposition~\ref{tilde-holom}.
The operator $T$ has the trace in $\C$ given by the formula
\begin{equation}  \label{integr}
\tr(T) = \oint_{| v | = 1 + \sigma } f (v,v)  dv  \, \mbox{,}
\end{equation}
where a real number  $\sigma >0$  is small enough such that the function $f(v,v)$ is holomorphic in the neighbourhood of the circle $| v | = 1 + \sigma$  (the result does not depend on the choice of  $\sigma$).

Note that the family of functions from $\h_+$
\begin{equation}  \label{family}
  1  \,   \mbox{,}  \,  \;  z \,    \mbox{,}  \,  \;  z^2  \,  \mbox{,}  \,  \;  z^3  \, \mbox{,}   \, \;  z^4    \,   \mbox{,}  \ldots \,
    \mbox{,}
    \;  z^j  \, \mbox{,}   \ldots
\end{equation}
has the property that every element from $\h_+$ can be uniquely written  as the convergent series $\sum_{j \ge 0} d_j z^j$ with $d_j \in \C$
in the topology of $\h_+$. Thus the family of functions~\eqref{family} is the generalization of basis for the topological vector space $\h_+$.

 Then we have
\begin{equation}  \label{ser}
\tr(T) = \sum_{j \ge 0} a_{j,j}  \, \mbox{,}
\end{equation}
where $\left( a_{j,k} \right)_{j,k \ge 0}$ is the matrix of the operator $T$ with respect to the family of functions~\eqref{family}. Besides, the function $f(z,w)$ has the power series decomposition
$$ f(z,w) = \frac{1}{2 \pi i} \left( \sum_{j \ge 0, k \ge 0 }  a_{j, k} z^j w^{-k -1}  \right)$$
  in the point $0 \times \infty   \in \C \times \widehat{\C}$.

Since the  radius of convergence of a power series depends only on the absolute values of coefficients of this series, after changing the coefficients $a_{j,k}$ of the power series to the coefficients $| a_{j,k}  |$
we obtain that the resulting  new function will be holomorphic at $(z,w)= (v,v)$, where $| v | = 1 + \sigma$, $\sigma > 0$ (see formula~\eqref{integr}). Now, by considering analog of formula~\eqref{integr} for the new function,  we obtain that the series~\eqref{ser} is absolutely convergent.

Fix a real number  $\delta  > 0$. Consider the space of linear operators  $ T \in {\Homb} ({\mathcal \h_+}, {\mathcal \h_+})$
 such that they correspond to holomorphic functions $f(z,w)$ on the open set \linebreak
 ${D_{1+ \delta} \times (\widehat{\C} \setminus \overline{D}_1) }$.
 We consider on the space of these holomorphic functions the topology of uniform convergence on compact sets, and thus we define the topology on the space of corresponding linear operators, obtaining a Fr\'echet space. From formula~\eqref{integr} it is easy to see that the trace is a continuous, linear  and hence  smooth function on this Fr\'echet space of continuous linear operators $T$ (which depends on the number $\delta$).

 The space  ${\Homb} ({\mathcal \h_+}, {\mathcal \h_+})$
 is the inductive limit of described Fr\'echet spaces over all $\delta$ (and it is enough to take the countable system of indices  $\delta = 1/n$, $n \in \dn$.) We consider the inductive limit topology on the space ${\Homb} ({\mathcal \h_+}, {\mathcal \h_+})$  in the category of Hausdorff locally convex topological vector spaces, i.e. we go to the  Hausdorff quotient vector spaces if it is necessary.
 Now we obtain that the trace is a smooth linear function on the space ${\Homb}  ({\mathcal \h_+}, {\mathcal \h_+})$.

\subsubsection{Determinant and inverse operator}  \label{determin}

Suppose that $T$ belongs to ${\Homb}  ({\mathcal \h_+}, {\mathcal \h_+})$. Then the operator $1+ T$ has  the determinant from $\C$ given by the formula
\begin{equation}  \label{deter}
\det(1 + T) = \sum_{l \ge 0} \tr(\Lambda^l T)   \, \mbox{,}
\end{equation}
where $\Lambda^l T$ are the exterior powers of operator $T$.

Let us show that every term in the right hand side of formula~\eqref{deter} makes sense and that this series is absolutely convergent.

Let  $f(z,w)$ be the function that corresponds to $T$ via item~\ref{it-3} of Proposition~\ref{holom} and Proposition~\ref{tilde-holom}.
It is not difficult to see (see also the nice exposition in the case of Hilbert spaces, but which works also in our case, in~\cite[Theorem~3.10]{Si}, and see~\cite[Chap.~III, \S~2]{Gro}) that
\begin{equation}  \label{trace-wedge}
 \tr(\Lambda^l T)  = \frac{1}{l!} \oint_{|v_1| = 1 + \sigma}  \ldots \oint_{|v_l| = 1 + \sigma}  f
 \begin{pmatrix}
   v_1 & \ldots & v_l\\
    v_1 & \ldots & v_l
 \end{pmatrix}  dv_1 \ldots dv_l  \, \mbox{,}
\end{equation}
where
\begin{equation}  \label{det-form}
f
 \begin{pmatrix}
   z_1 & \ldots & z_l\\
    w_1 & \ldots & w_l
    \end{pmatrix}
=
\det \left( ( f(z_j, w_k))_{1 \le j , k \le l}   \right)
\end{equation}
 and a real number $\sigma >0$  is small enough such that the function $f(z,w)$ is holomorphic in the neighbourhood of the product of two  circles $| z | = 1 + \sigma$ and $| w | = 1 + \sigma$  (the result does not depend on the choice of  $\sigma$).

Now by the   Hadamard's theorem on determinants (or Hadamar's inequality)
$$
\max_{| z | = |w| = 1 + \sigma} \left| \det \left( (f(z_j, w_k))_{1 \le j , k \le l}   \right)  \right|  \, \le  \,  b^l l^{l/2}   \, \mbox{,} \quad \mbox{where}  \quad
b \ge \max_{| z | = |w| = 1 + \sigma}  |f(z,w)  |  \, \mbox{.}
$$

Using the Stirling's formula for $l!$ we obtain
$$
\frac{b^l l^{l/2}}{l!} \, \le  \, c \left( \frac{b e}{\sqrt{l}} \right)^l  \, \mbox{,}
$$
where $c > 0$ is a real constant.

From these inequalities we obtain that the series~\eqref{deter} is absolutely convergent. (See also the similar inequalities in~\cite[Chap.~II, \S~2, Chap.~III, \S~2]{Gro}.)

For any $P$ from  ${\Homb}  ({\mathcal \h_+}, {\mathcal \h_+})$ it is possible to write the Taylor decomposition for $\det(1+T +P)$ using the formula for $\tr \Lambda^l(T +P)$ that comes from the right hand side
of formula~\eqref{det-form} (see~\cite[Chap.~I, \S~4]{Gro}). Hence and again using the Hadamar's inequality and the Stirling's formula
 we obtain that
the determinant is a continuous and smooth function on
the space ${\Homb}  ({\mathcal \h_+}, {\mathcal \h_+})$ (where we consider the inductive limit topology as at the end of Section~\ref{trace}).

Hence (and we can  use~\cite[Chap.~I, \S~3]{Gro}, since the set of  operators of finite rank is dense, because there is family~\eqref{family}) we obtain that for any $P$ and $T$ from  ${\Homb}  ({\mathcal \h_+}, {\mathcal \h_+})$:
\begin{equation}  \label{invert-mult}
\det (1+P)  \det(1+T) = \det \left( (1+P)(1+T)    \right) = \det (1+P +T +PT)  \, \mbox{.}
\end{equation}

If $\det(1 + T) \ne 0$ for $T$ from  ${\Homb}  ({\mathcal \h_+}, {\mathcal \h_+})$, then there is the inverse operator  $(1+T)^{-1}$ from ${1+ \Homb}  ({\mathcal \h_+}, {\mathcal \h_+})$ given by a formula
\begin{equation}  \label{invert}
(1+T)^{-1}= 1 - \det(1+T)^{-1}R \mbox{,}
\end{equation}
where $R$  from   ${\Homb}  ({\mathcal \h_+}, {\mathcal \h_+})$  corresponds to the following holomorphic function of variables $z$ and $w$ via Proposition~\ref{holom} and Proposition~\ref{tilde-holom} (see~\cite[Chap.~I, \S~6, Chap.~III, \S~2]{Gro}):
\begin{equation}  \label{inverse}
\sum_{l \ge 0} \frac{1}{l!} \oint_{|v_1| = 1 + \sigma} \ldots \oint_{|v_l| = 1 + \sigma} f
 \begin{pmatrix}
   v_1 & \ldots & v_l &z \\
    v_1 & \ldots & v_l & w
 \end{pmatrix}  dv_1 \ldots dv_l
\end{equation}

Moreover, the map $T \mapsto (1+T)^{-1}$ is continuous and smooth on the open subset of linear operators $T$ from ${\Homb}  ({\mathcal \h_+}, {\mathcal \h_+})$ such that $\det(1 +T) \ne 0$, since it follows again from
the Hadamar's inequality, the Stirling's formula, formula~\eqref{inverse} and the formula
$$
(1 + T + P)^{-1}= (1+T)^{-1} \left(1 + (1+T)^{-1}P \right)^{-1}  \, \mbox{.}
$$

Besides, for $T$ from ${\Homb}  ({\mathcal \h_+}, {\mathcal \h_+})$ the operator $1+T$ is invertible in $\Hom  ({\mathcal \h_+}, {\mathcal \h_+})$ and consequently $(1+T)^{-1}$ belongs to  ${1+ {\Homb}  ({\mathcal \h_+}, {\mathcal \h_+})}$ if and only if $\det(1 +T) \ne 0$. (Indeed, we use formula~\eqref{invert}, and also if $(1+T)O =1$, then $O = 1 -TO$ and $TO$ belongs to ${\Homb}  ({\mathcal \h_+}, {\mathcal \h_+})$ by formula~\eqref{inv-comp}, and we can use formula~\eqref{invert-mult}.)

\begin{nt}  \em
Analogs of all the reasonings in  Sections~\ref{trace}--\ref{determin} are applicable also to~$\h_-$.
\end{nt}

\section{The determinant central extensions of groups \linebreak ${\mathop{ \rm GL}}^+_{\rm res}(\h)$, ${\mathop{ \rm GL}}^-_{\rm res}(\h)$ and ${\mathop{ \rm GL}}^0_{\rm res}(\h)$}
\label{group-gl}

\subsection{The groups ${\mathop{ \rm GL}}^+_{\rm res}(\h)$ and ${\mathop{ \rm GL}}^-_{\rm res}(\h)$ and their central extensions}

For any $A$ from  ${\mathop{ \rm GL}}(\h)$ consider the block matrix
\begin{equation}  \label{block-mat-2}
\begin{pmatrix}
  A_{--} & A_{+-} \\
  A_{-+} & A_{++}
\end{pmatrix}
\end{equation}
with respect to the decomposition $\h = \h_- \oplus \h_+$ as in~\eqref{block-mat}.

\begin{defin}  \label{GL+} Define the set
$
{\mathop{ \rm GL}}^+_{\rm res}(\h) $
as the set of operators
  $ A \in \mathop{{\rm GL}}(\h) $ such that  the operators  $A_{+-}$ and $(A^{-1})_{+-}$  are from   ${\Homb}  ({\mathcal \h_+}, {\mathcal \h_-}) $ and there is
  $r \in {\rm GL }(\h_+)$ (which depends on $A$) such that $A_{++} -r $ belongs to ${\Homb}  ({\mathcal \h_+}, {\mathcal \h_+})$.
\end{defin}

\begin{prop}  \label{group}
The set $
{\mathop{ \rm GL}}^+_{\rm res}(\h) $ is a subgroup of the group $\mathop{{\rm GL}}(\h) $, and there is a natural exact sequence of groups
\begin{equation}  \label{exact-seq}
1 \lrto {\mathcal T}  \lrto {\mathcal E} \stackrel{\theta}{\lrto} {\mathop{ \rm GL}}^+_{\rm res}(\h)  \lrto 1  \, \mbox{,}
\end{equation}
where the group ${\mathcal E}$ consists of all pairs $(A, r) \in {\mathop{ \rm GL}}^+_{\rm res}(\h) \times {\rm GL }(\h_+)$ such that $A_{++} -r $ belongs to ${\Homb}  ({\mathcal \h_+}, {\mathcal \h_+})$,
and $\theta$ is the projecton to the first group in the direct product.
\end{prop}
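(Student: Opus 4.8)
The whole statement is block-matrix bookkeeping for the topological direct sum $\h=\h_-\oplus\h_+$ of \eqref{H-decomp}, governed by one structural fact: the inclusion \eqref{inv-comp}, which says that $\Homb(\cdot,\cdot)$ behaves like a two-sided ideal, so in particular $\Hom\circ\Homb\subset\Homb$ and $\Homb\circ\Hom\subset\Homb$, and that $\Homb(\cdot,\cdot)$ is a linear subspace. The plan is to run all the verifications — closure of $\mathop{\rm GL}^+_{\rm res}(\h)$ under products and inverses, the group structure on $\mathcal E$, and exactness of \eqref{exact-seq} — off the product of $2\times 2$ block matrices. First note that every block $A_{\pm\pm}$ of an $A\in\mathop{\rm GL}(\h)$ is automatically continuous, hence lies in the relevant $\Hom(\cdot,\cdot)$: it is a composition of $A$ with the projections and inclusions of \eqref{H-decomp}, which are continuous because the topology on $\h$ is the product topology of $\h_-$ and $\h_+$ (Section~\ref{unit_circle}).

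\emph{Closure under products.} For $A,B\in\mathop{\rm GL}^+_{\rm res}(\h)$ write $A_{++}=r_A+S_A$ and $B_{++}=r_B+S_B$ with $r_A,r_B\in\mathop{\rm GL}(\h_+)$ and $S_A,S_B\in\Homb(\h_+,\h_+)$. Multiplying block matrices, $(AB)_{+-}=A_{--}B_{+-}+A_{+-}B_{++}$, and both summands lie in $\Homb(\h_+,\h_-)$ by \eqref{inv-comp} (in the first term $B_{+-}$ is the $\Homb$-factor, in the second $A_{+-}$ is); the identical computation applied to $(AB)^{-1}=B^{-1}A^{-1}$, using $(A^{-1})_{+-},(B^{-1})_{+-}\in\Homb$, gives $((AB)^{-1})_{+-}\in\Homb(\h_+,\h_-)$. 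Moreover $(AB)_{++}=A_{-+}B_{+-}+A_{++}B_{++}=r_Ar_B+\big(A_{-+}B_{+-}+r_AS_B+S_Ar_B+S_AS_B\big)$, and the parenthesis lies in $\Homb(\h_+,\h_+)$ by \eqref{inv-comp} while $r_Ar_B\in\mathop{\rm GL}(\h_+)$; hence $AB\in\mathop{\rm GL}^+_{\rm res}(\h)$. The identity obviously lies in $\mathop{\rm GL}^+_{\rm res}(\h)$.

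\emph{Closure under inverses} is the one step that is not purely formal, since the definition of $\mathop{\rm GL}^+_{\rm res}(\h)$ does not directly constrain the $(+,+)$-block of $A^{-1}$. Set $C=A^{-1}$. Reading off the $(+,+)$-block of $CA=\mathrm{id}$ gives $C_{-+}A_{+-}+C_{++}A_{++}=\mathrm{id}_{\h_+}$; since $A_{+-}\in\Homb(\h_+,\h_-)$ we get $C_{-+}A_{+-}\in\Homb(\h_+,\h_+)$, so $C_{++}A_{++}\in\mathrm{id}+\Homb(\h_+,\h_+)$, and substituting $A_{++}=r_A+S_A$ and using $C_{++}S_A\in\Homb$ yields $C_{++}r_A\in\mathrm{id}+\Homb(\h_+,\h_+)$, i.e. $C_{++}\in r_A^{-1}+\Homb(\h_+,\h_+)$ with $r_A^{-1}\in\mathop{\rm GL}(\h_+)$. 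Together with $C_{+-}\in\Homb$ (part of the hypothesis on $A$) and $(C^{-1})_{+-}=A_{+-}\in\Homb$, this shows $A^{-1}\in\mathop{\rm GL}^+_{\rm res}(\h)$. Thus $\mathop{\rm GL}^+_{\rm res}(\h)$ is a subgroup of $\mathop{\rm GL}(\h)$.

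\emph{The extension.} Put componentwise multiplication on $\mathcal E$: $(A,r)(B,s)=(AB,rs)$. The formula for $(AB)_{++}$ above shows $(AB)_{++}-rs\in\Homb(\h_+,\h_+)$ as soon as $A_{++}-r,B_{++}-s\in\Homb$, so $\mathcal E$ is closed under products; and the relation $C_{++}r\in\mathrm{id}+\Homb$ obtained in the previous step, applied with the chosen witness $r$, shows $(A^{-1})_{++}-r^{-1}\in\Homb$, so $\mathcal E$ is closed under inverses. Hence $\mathcal E$ is a group and $\theta\colon(A,r)\mapsto A$ is a homomorphism; it is surjective precisely because the existence of a witness $r$ is built into the definition of $\mathop{\rm GL}^+_{\rm res}(\h)$. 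Finally $\mathcal T:=\ker\theta$ is the set of $(\mathrm{id},r)$ with $r\in\mathop{\rm GL}(\h_+)$ and $\mathrm{id}-r\in\Homb(\h_+,\h_+)$; by the description of invertible operators $1+T$ at the end of Section~\ref{determin} this is $\{1+T:\ T\in\Homb(\h_+,\h_+),\ \det(1+T)\neq 0\}$, a group under composition with $(1+T)^{-1}\in 1+\Homb(\h_+,\h_+)$, and projection to the second coordinate identifies $\mathcal T$ with it; exactness of \eqref{exact-seq} is then immediate. The only genuine subtlety, as noted, is extracting control of the $(+,+)$-block of an inverse operator from the matrix identities $AA^{-1}=A^{-1}A=\mathrm{id}$ rather than having it handed over by the definition.
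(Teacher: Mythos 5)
Your proof is correct and follows essentially the same route as the paper: block-matrix multiplication for $\h=\h_-\oplus\h_+$ combined with the ideal property \eqref{inv-comp}, and control of the $(+,+)$-block of $A^{-1}$ extracted from the $(+,+)$-block of the identity $A^{-1}A=\mathrm{id}$, exactly as in the paper's computation of $(A_1^{-1})_{++}-r_1^{-1}$. Your extra identification of $\mathcal T$ with $\{1+T:\ T\in\Homb(\h_+,\h_+),\ \det(1+T)\neq 0\}$ is a correct elaboration of what the paper only notes after the proposition.
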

\begin{proof}
Clearly, if  $A$ and $B$ from $\mathop{{\rm GL}}(\h) $ such that $A_{+-}$ and   $B_{+-}$   are from   ${\Homb}  ({\mathcal \h_+}, {\mathcal \h_-}) $, then
$(AB)_{+-}$ is from ${\Homb}  ({\mathcal \h_+}, {\mathcal \h_-}) $, see formula~\eqref{inv-comp}.

Let us check that the set $\mathcal E$ is a group.

Suppose that $(A_1, r_1)$ and $(A_2, r_2)$ are from ${\mathcal E}$. Then $(A_1A_2, r_1 r_2)$ is from ${\mathcal E}$, since
\begin{multline*}
(A_1 A_2)_{++} - r_1 r_2 = (A_1)_{-+} (A_2)_{+-} + (A_1)_{++} (A_2)_{++} - r_1 r_2 =  \\
= (A_1)_{-+} (A_2)_{+-} + \left((A_1)_{++} - r_1 \right) (A_2)_{++} + r_1 \left((A_2)_{++} - r_2 \right)  \, \mbox{,}
\end{multline*}
and we  use formula~\eqref{inv-comp}.

Let $q = (A_1)_{++} - r_1$. Analogously we obtain that $(A_1^{-1}, r_1^{-1})$ is from ${\mathcal E}$, since
\begin{multline*}
(A_1^{-1})_{++} - r_1^{-1}  = \left((A_1^{-1})_{++} \, r_1 - 1 \right)r_1^{-1} = \left( (A_1^{-1})_{++} \, \left((A_1)_{++} - q \right) -1   \right)r_1^{-1} =  \\ = \left(1 - (A_1^{-1})_{-+} (A_1)_{+-} - (A_1^{-1})_{++} \, q -1  \right) r_1^{-1} =
\left( - (A_1^{-1})_{-+} (A_1)_{+-} - (A_1^{-1})_{++} \, q \right) r_1^{-1}  \, \mbox{,}
\end{multline*}
and we  use formula~\eqref{inv-comp} again.
\end{proof}

\medskip

We note that the group ${\mathcal T}$ consists of the pairs $(1, r) \in {\mathop{ \rm GL}}^+_{\rm res}(\h) \times {\rm GL }(\h_+)$ such that $q = 1 -r $ belongs to ${\Homb}  ({\mathcal \h_+}, {\mathcal \h_+})$.
Then by Section~\ref{determin}  the following map $\Psi$ is a well-defined homomorphism from the group ${\mathcal T}$ to the group $\C^*$:
$$
{\mathcal T}  \ni (1,r) \,   \stackrel{\Psi}{\longmapsto}  \, \det r = \det (1 -q)  \in \C^*  \, \mbox{.}
 $$

For any $s \in {\rm GL }(\h_+)$ we have $$\det(1 - sqs^{-1}) = \det(1-q)  \, \mbox{,}$$
since by formula~\eqref{deter} it is enough to see
$$\tr(\Lambda^l(sqs^{-1})) = \tr(\Lambda^l(s))  \, \mbox{,}$$
 and this follows easily from formulas~\eqref{conv}, \eqref{trace-wedge}    and~\eqref{det-form}, because we have just to change the order of the integration in these formulas.

Therefore $\Ker \Psi$ is a normal subgroup in the group ${\mathcal E}$.  We denote the group
$$\widetilde{{\mathop{ \rm GL}}^+_{\rm res}(\h) }  = {\mathcal E} / \Ker \Psi  \mbox{.}$$
From~\eqref{exact-seq} we obtain the following  central extension, which we will call {\em the determinant central extension} of the group ${\mathop{ \rm GL}}^+_{\rm res}(\h) $:
$$
1 \lrto \C^* \lrto \widetilde{{\mathop{ \rm GL}}^+_{\rm res}(\h) }  \lrto {\mathop{ \rm GL}}^+_{\rm res}(\h)  \lrto 1  \, \mbox{.}
$$

\begin{nt}  \em
Similar to the group ${\mathop{ \rm GL}}^+_{\rm res}(\h) $, one defines the group ${\mathop{ \rm GL}}^-_{\rm res}(\h)$, but in
Definition~\ref{GL+}
one has to demand that $A_{-+}$ and $(A^{-1})_{-+}$  are from   ${\Homb}  ({\mathcal \h_-}, {\mathcal \h_+}) $ instead of  that $A_{+-}$ and $(A^{-1})_{+-}$  are from   ${\Homb}  ({\mathcal \h_+}, {\mathcal \h_-}) $.
Then, similarly to the Proposition~\ref{group},
 starting from the subgroup of the group ${\mathop{ \rm GL}}^-_{\rm res}(\h) \times {\rm GL }(\h_+)$ one  defines {\em the determinant central extension} of the group ${\mathop{ \rm GL}}^-_{\rm res}(\h) $.
\end{nt}

\subsection{The subgroup ${\mathop{ \rm GL}}^0_{\rm res}(\h)$ of the group  ${\mathop{ \rm GL}}^+_{\rm res}(\h)$}

It is possible to define the subgroup ${\mathop{ \rm GL}}^0_{\rm res}(\h)$ of the group  ${\mathop{ \rm GL}}^+_{\rm res}(\h)$ such that it will looks more symmetric and is similar to the case of Hilbert spaces described
in~\cite[\S~6.2, \S~6.6]{PS}.

Denote by $J \in \mathop{{\rm GL}}(\h)$ the linear operator given  by the block  matrix
$$
J = \begin{pmatrix}
  -1 & 0 \\
  0 & 1
\end{pmatrix}
$$
with respect
to the decomposition $\h = \h_- \oplus \h_+$, see formula~\eqref{block-mat-2}.

\begin{defin} Define the set
$
{\mathop{ \rm GL}}^0_{\rm res}(\h) $
as the set of operators
  $ A \in \mathop{{\rm GL}}(\h) $ such that   $[J, A] \in {\Homb}  ({\mathcal \h}, {\mathcal \h}) $ and there is
  $r \in {\rm GL }(\h_+)$ (which depends on $A$) such that $A_{++} -r $ belongs to ${\Homb}  ({\mathcal \h_+}, {\mathcal \h_+})$.
\end{defin}

Using notation~\eqref{block-mat-2}, we note that
$$
[J, A] = \begin{pmatrix}
  0 & -2 A_{+-} \\
  2 A_{-+} & 0
\end{pmatrix}  \, \mbox{.}
$$
Therefore the condition $[J, A] \in {\Homb}  ({\mathcal \h}, {\mathcal \h})$ is equivalent to the following condition:
\begin{equation}  \label{plmi}
A_{+-}  \in {\Homb}  ({\mathcal \h_+}, {\mathcal \h_-}) \qquad  \mbox{and}
\qquad
A_{-+}  \in {\Homb}  ({\mathcal \h_-}, {\mathcal \h_+})  \, \mbox{.}
\end{equation}

Since $[J, \, \cdot \,]$ is a derivation of the ring $\Hom(\h, \h)$, we obtain from formula~\eqref{inv-comp} by considering $[J, A A^{-1}]$ and $[J, AB]$ that if $[J, A]$ and $[J, B]$ belong to ${\Homb}  ({\mathcal \h}, {\mathcal \h}) $, then $[J, A^{-1}]$ and $[J, AB]$
belong to ${\Homb}  ({\mathcal \h}, {\mathcal \h}) $.

Therefore, by repeating the proof of Proposition~\ref{group} we obtain that the set $
{\mathop{ \rm GL}}^0_{\rm res}(\h) $ is a group. And it is a subgroup in the following  groups:
$${\mathop{ \rm GL}}^0_{\rm res}(\h) \hookrightarrow {\mathop{ \rm GL}}^+_{\rm res}(\h) \, \mbox{,}  \qquad \qquad {\mathop{ \rm GL}}^0_{\rm res}(\h) \hookrightarrow {\mathop{ \rm GL}}^-_{\rm res}(\h)  \, \mbox{.}$$

\medskip

By restricting the determinant central extension of  the group ${\mathop{ \rm GL}}^+_{\rm res}(\h) $ to the subgroup ${\mathop{ \rm GL}}^0_{\rm res}(\h) $  (or, in other words, by taking the pullback of the determinant central extension via the embedding ${\mathop{ \rm GL}}^0_{\rm res}(\h) \hookrightarrow{\mathop{ \rm GL}}^+_{\rm res}(\h) $) we obtain the central extension of the group ${\mathop{ \rm GL}}^0_{\rm res}(\h) $.

We call this constructed central extension {\em the determinant central extension} of the group ${\mathop{ \rm GL}}^0_{\rm res}(\h) $.

From the construction it follows also that the pullback of the determinant central extension of the group  ${\mathop{ \rm GL}}^-_{\rm res}(\h) $ via the embedding ${\mathop{ \rm GL}}^0_{\rm res}(\h) \hookrightarrow{\mathop{ \rm GL}}^-_{\rm res}(\h) $ gives the same central extension of  the group ${\mathop{ \rm GL}}^0_{\rm res}(\h) $, i.e. the determinant central extension of  the group ${\mathop{ \rm GL}}^0_{\rm res}(\h) $.

\section{The determinant central extensions of Lie groups $\g^0$ and $\g$}
\label{det-centr-g}
\subsection{The determinant central extension of Lie group $\g^0$}  \label{det-g0}
\begin{Th}  \label{th-plmi}
For any $g \in \G$ the linear operators $g_{+-}$ and  $g_{-+}$ (see formula~\eqref{block-mat})  belong to  $\Homb  ({\mathcal \h_+}, {\mathcal \h_-}) $ and ${\Homb}  ({\mathcal \h_-}, {\mathcal \h_+})$ correspondingly.
\end{Th}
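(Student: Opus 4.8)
The plan is to mimic the structure of the proof of Theorem~\ref{th-block}, but now tracking the off-diagonal blocks $g_{+-}$ and $g_{-+}$ rather than the diagonal ones. First I would recall that it suffices to treat $g_{+-}$; the claim for $g_{-+}$ follows by the symmetric argument (using $d = d_+ d_-$ and the opposite conformal welding of $f$, exactly as in the last paragraph of the proof of Theorem~\ref{th-block}). I would also reduce to the case $g = (d,f) \in \g^0$ first and then remark on how to handle the extra $\dz$-factor in $\g = \g^0 \rtimes \dz$: the generator acts by multiplication by $z$, whose off-diagonal blocks are visibly finite rank (indeed $z \cdot \h_+ \subset \h_+$, so the relevant block is even zero on one side, and the other is rank one), hence in $\Homb$, and by the composition property~\eqref{inv-comp} the product of any element of $\g^0$ with a power of this generator still has off-diagonal blocks in $\Homb$.

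The core of the argument is the factorization~\eqref{arr} established in the proof of Theorem~\ref{th-block}: the action of $g \in \g^0$ on $\h$ is the composition
$$
\xymatrix{
\h  \ar[rr]^{f_+} && \h(\Gamma)  \ar[rr]^{\times w_+} &&  \h(\Gamma)  \ar[rr]^{\times w_-} && \h(\Gamma)  \ar[rr]^{(f_-)^{\circ -1}} && \h  \ar[rr]^{\times d_-} && \h  \, \mbox{,} }
$$
where the first two maps are lower-triangular with respect to the decompositions $\h = \h_-\oplus\h_+$, $\h(\Gamma) = \h_-(\Gamma)\oplus\h_+(\Gamma)$, and the last three are upper-triangular. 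The key new observation I would make is that each of these five maps has its off-diagonal block not merely continuous but lying in the corresponding $\Homb$-space. This is because each map is given, via Proposition~\ref{holom} and Proposition~\ref{tilde-holom}, by an integral kernel $f(z,w)$ that is holomorphic on a strictly larger open set than the minimal one — the maps $f_\pm$ are univalent holomorphic in an honest neighbourhood of $\overline{D}_1$ (resp.\ of $\widehat{\C}\setminus D_1$), $w_\pm$ are holomorphic invertible in a neighbourhood of $\Gamma$, and $d_-$ extends holomorphically past $S^1$ — so the off-diagonal block (which involves the kernel on a region where it is genuinely holomorphic across the circle) automatically satisfies the $\delta > 0$ condition of Proposition~\ref{tilde-holom}. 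Here the slight subtlety is that $\h(\Gamma)$ is itself a Silva space with its own $\Homb$-subspaces (via the obvious analogues of Proposition~\ref{holom} for the curve $\Gamma$), but since $f_\pm$ are biholomorphisms near $S^1$ resp.\ near $\Gamma$, composition with them is an isomorphism of Silva spaces identifying these $\Homb$-spaces compatibly.

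Then I would conclude by a triangular-block bookkeeping computation: multiplying out the block matrices $\left(\begin{smallmatrix}*&0\\ *&*\end{smallmatrix}\right)$ for the first two factors and $\left(\begin{smallmatrix}*&*\\ 0&*\end{smallmatrix}\right)$ for the last three, the composite block $g_{+-}\colon \h_+ \to \h_-$ is an explicit sum of products of blocks of the five factors, and in each such product at least one factor is an off-diagonal block, which lies in the appropriate $\Homb$-space; by the composition rule~\eqref{inv-comp} (with the remaining factors being ordinary continuous operators in $\Hom$) every summand lies in $\Homb(\h_+,\h_-)$, hence so does $g_{+-}$. The main obstacle I anticipate is purely organizational rather than conceptual: carefully stating the analogue of Proposition~\ref{holom} for the auxiliary curve $\Gamma$ and checking that the biholomorphisms $f_\pm$ near $S^1$, resp.\ near $\Gamma$, really do carry $\Homb$ to $\Homb$ — this is where one must be sure no ``extra'' non-$\Homb$ behaviour is introduced by reparametrizing the contour. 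Once that identification is in place, the triangular computation is exactly as in Theorem~\ref{th-block}, now reading off the anti-diagonal entry instead of the diagonal one.
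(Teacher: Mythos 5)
Your proposal is correct in outline, but it takes a genuinely different and substantially heavier route than the paper. The paper's own proof is a short direct argument that never touches conformal welding: for $h \in \h_+$, the function $g \diamond h = d\cdot(h\circ f^{\circ -1})$ is holomorphic on $U\cap V$, where $V = \widehat{\C}\setminus\overline{D}_{1-1/n}$ depends only on $g$ (because $d$ and $f$ are analytic on a fixed annular neighbourhood of $S^1$) while $U = D_{1+1/m}$ may depend on $h$; splitting $g\diamond h = w_+ + w_-$ by the \v{C}ech decomposition~\eqref{hol-dec-wl}, the component $w_-$ lies in $\h_0(V)$ with $V$ fixed, i.e.\ in a single Banach stage of the inductive limit defining $\h_-$, which is precisely the definition of $\Homb(\h_+,\h_-)$; and this works for all $g\in\G$ at once, with no reduction to $\g^0$. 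Your route instead recycles the welding factorization~\eqref{arr} from Theorem~\ref{th-block}, which forces the reduction to $\g^0$ (welding needs winding number zero), the introduction of $\Homb$-type subspaces for the auxiliary Silva spaces $\h_\pm(\Gamma)$, and a factor-by-factor check that each off-diagonal block lands in a fixed Banach stage — and each of those checks is essentially the paper's one-step argument in disguise (the multiplier or biholomorphism is analytic across the relevant curve on a \emph{fixed} annulus, which pins down one side of the region of holomorphy independently of the input function). Your bookkeeping then closes correctly because any path from $\h_+$ to $\h_-$ through the five triangular factors must use at least one off-diagonal block, and \eqref{inv-comp} holds for arbitrary Silva spaces; note also that the compatibility worry you raise about transporting $\Homb$ along $f_\pm$ is not actually needed — once each factor's off-diagonal block is itself in the appropriate $\Homb$-space, \eqref{inv-comp} finishes the argument without identifying $\Homb$-spaces across the reparametrization. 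So your approach is workable and has the virtue of making explicit which welding factor contributes the ``smoothing'' blocks, but the welding machinery is not needed for this statement, and the paper's direct argument is both shorter and more general (covering $\G$, not just $\g^0$, in one stroke).
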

\begin{proof}
Consider  $g \in \G$.
We prove that $g_{+-}$ belongs to ${\Homb}  ({\mathcal \h_+}, {\mathcal \h_-}) $.
Let $h$ be from~$\h_+$.

It is easy to see that  there is an open set $V = \widehat{\C} \setminus \overline{D}_{1 - 1/n} $ (with $n > 1$) that depends only on $g$, and there is an open set $U =  D_{1 + 1/m} $ (with $m>1$) that depends on $g$ and $h$ such that
$$
g \diamond h    \, \in \, \h(U \cap V)  \, \mbox{.}
$$

We have $\widehat{\C} = U \cup V$. Therefore  by formula~\eqref{hol-dec-wl} from the proof of Proposition~\ref{dec-gamma}
we have the unique decomposition  $g \diamond h = w_+ + w_-$, where $w_+ \in \h(U)$ and $w_- \in \h_0(V)$. Since $V$ does not depend on $h$, the operator $g_{+-}$ belongs to ${\Homb}  ({\mathcal \h_+}, {\mathcal \h_-}) $.

\quash{\h = \varinjlim_{n \in \N} \tilde{\h} \left(A_{1/n} \right) \, \mbox{,} \qquad  \h_-=  \varinjlim_{n \in \N} \tilde{\h}_0 \left(\widehat{\C} \setminus \overline{D}_{1 - 1/n} \right)  \, \mbox{,}
\qquad
\h_+ = \varinjlim_{n \in \N} \tilde{\h} \left( D_{1 + 1/n } \right)   \, \mbox{,}}

The case of $g_{-+}$ is proved analogously.

\end{proof}

\begin{prop}
The group $\g^0$ is a subgroup of the group $
{\mathop{ \rm GL}}^0_{\rm res}(\h) $.
\end{prop}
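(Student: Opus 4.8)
The plan is to check, for an arbitrary $g \in \g^0$, the conditions in the definition of ${\mathop{\rm GL}}^0_{\rm res}(\h)$ one at a time, reducing each of them to a statement already proved above. First I would recall that $\g$ acts smoothly, hence continuously, on the Silva space $\h$ by the rule $(d,f) \diamond h = d \cdot (h \circ f^{\circ -1})$; applying this to the inverse element $g^{-1} \in \g^0$ produces a two-sided continuous inverse of the operator $g$, so $g \in \mathop{\rm GL}(\h)$ and its block matrix $\left(\begin{smallmatrix} g_{--} & g_{+-} \\ g_{-+} & g_{++} \end{smallmatrix}\right)$ with respect to $\h = \h_- \oplus \h_+$ is defined.

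For the off-diagonal blocks, Theorem~\ref{th-plmi} gives at once that $g_{+-} \in \Homb(\h_+, \h_-)$ and $g_{-+} \in \Homb(\h_-, \h_+)$. By the identity for $[J,g]$ recorded immediately after the definition of ${\mathop{\rm GL}}^0_{\rm res}(\h)$, these two memberships are exactly equivalent to $[J,g] \in \Homb(\h, \h)$, which is the first requirement in that definition.

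For the remaining (diagonal) requirement, Theorem~\ref{th-block} tells us that for $g \in \g^0$ the operator $g_{++}$ lies in $\mathop{\rm GL}(\h_+)$, so one may simply take $r = g_{++}$, whence $g_{++} - r = 0 \in \Homb(\h_+, \h_+)$ trivially. Thus $g$ satisfies all the defining conditions, so $g \in {\mathop{\rm GL}}^0_{\rm res}(\h)$; since $g \in \g^0$ was arbitrary and $\g^0$ is a group, it is a subgroup of ${\mathop{\rm GL}}^0_{\rm res}(\h)$. I do not expect a genuine obstacle here: all the analytic content has already been packaged into Theorems~\ref{th-block} and~\ref{th-plmi}, and the only points needing attention are the bookkeeping that the $\diamond$-action of $\g^0$ genuinely lands in $\mathop{\rm GL}(\h)$ rather than merely among continuous operators, and the observation that the clause ``there exists $r$'' is met most cheaply by $r = g_{++}$ rather than by exhibiting a nontrivial $r$. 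One may also remark that this inclusion is precisely what makes it possible to define the determinant central extension of $\g^0$ by pullback from ${\mathop{\rm GL}}^0_{\rm res}(\h)$, as announced in the introduction.
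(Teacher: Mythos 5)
Your argument is correct and is essentially the paper's own proof: the off-diagonal conditions come from Theorem~\ref{th-plmi} together with formula~\eqref{plmi}, the diagonal condition is met by taking $r = g_{++}$ via Theorem~\ref{th-block}, and the membership $g \in \mathop{\rm GL}(\h)$ follows from the continuous action of $g^{-1}$. No issues.
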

\begin{proof}
This follows from Theorem~\ref{th-block} and Theorem~\ref{th-plmi}, and from the construction of the group $
{\mathop{ \rm GL}}^0_{\rm res}(\h) $ (see also formula~\eqref{plmi}).
\end{proof}

\begin{defin}  \label{centr-det-g0}
The pullback of the determinant central extension via the embedding ${\g^0  \hookrightarrow {\mathop{ \rm GL}}^0_{\rm res}(\h)}$ is called the determinant central extension of the group $\g^0$. We denote this central extension as
\begin{equation}  \label{g-det}
1 \lrto \C^*  \lrto \widetilde{\g^0}  \lrto \g^0 \lrto 1  \, \mbox{.}
\end{equation}
\end{defin}

We give the properties of this central extension.
\begin{Th}  \label{th-3}
There is a natural (non-group) section $\sigma : \g^0  \to \widetilde{\g^0}$ of the map
$\widetilde{\g^0} \to \g^0 $
in sequence~\eqref{g-det}. For any $g_1$ and  $g_2$ from $\g^0$ we have
\begin{gather}
\sigma(g_1) \sigma(g_2) = D(g_1, g_2) \sigma(g_1 g_2)  \, \mbox{,} \nonumber \\  \label{2-coc}
\mbox{where} \quad D(g_1, g_2) = \det\left((g_1)_{++} (g_2)_{++} ((g_1g_2)_{++})^{-1}\right)  \, \in \, \C^*  \, \mbox{.}
\end{gather}
The determinant in~\eqref{2-coc}  is well-defined, since it is applied to the element from the set ${1 + {\Homb}  ({\mathcal \h_+}, {\mathcal \h_+}) }$. Besides, $D(\cdot, \cdot)$ is a smooth  $2$-cocycle on the group $\g^0$ with values in the trivial $\g^0$-module $\C^*$. Therefore $\widetilde{\g^0}$ is a smooth Lie group modelled on Silva spaces, whose topological space (without the group structure) is isomorphic to $\C^* \times \g^0$.
\end{Th}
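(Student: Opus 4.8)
The plan is to produce the section $\sigma$ and the cocycle $D$ directly from the exact sequence~\eqref{exact-seq} of Proposition~\ref{group} (as pulled back to $\g^0$ via ${\g^0\hookrightarrow{\mathop{\rm GL}}^0_{\rm res}(\h)}$), and then to verify smoothness using the holomorphic-function model of $\Homb(\h_+,\h_+)$ from Section~\ref{sect-cont}. First I would note that by Theorem~\ref{th-block} every $g\in\g^0$ has $g_{++}\in{\rm GL}(\h_+)$; the tautological choice $r=g_{++}$ shows that the pair $(g,g_{++})$ lies in $\mathcal E$, because $g_{++}-r=0\in\Homb(\h_+,\h_+)$. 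Passing to the quotient $\widetilde{\g^0}=\mathcal E/\Ker\Psi$, define $\sigma(g)$ to be the image of $(g,g_{++})$. This is a set-theoretic section of $\widetilde{\g^0}\to\g^0$ since $\theta(g,g_{++})=g$.

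Next I would compute the cocycle. In $\mathcal E$ the product is $(g_1,(g_1)_{++})(g_2,(g_2)_{++})=(g_1g_2,(g_1)_{++}(g_2)_{++})$, whereas $\sigma(g_1g_2)$ comes from $(g_1g_2,(g_1g_2)_{++})$. These two pairs differ by the element $\bigl(1,(g_1)_{++}(g_2)_{++}((g_1g_2)_{++})^{-1}\bigr)$, which lies in $\mathcal T$: indeed $(g_1)_{++}(g_2)_{++}-(g_1g_2)_{++}=-(g_1)_{-+}(g_2)_{+-}\in\Homb(\h_+,\h_+)$ by Theorem~\ref{th-plmi} and formula~\eqref{inv-comp}, and right-multiplying by $((g_1g_2)_{++})^{-1}\in{\rm GL}(\h_+)$ keeps us in $1+\Homb(\h_+,\h_+)$ by~\eqref{inv-comp} again; in particular $(g_1g_2)_{++}$ is invertible in $\Hom(\h_+,\h_+)$, so by the last paragraph of Section~\ref{determin} the determinant $D(g_1,g_2)=\det\bigl((g_1)_{++}(g_2)_{++}((g_1g_2)_{++})^{-1}\bigr)$ is nonzero and equals $\Psi$ of that element. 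Applying $\Psi$ and using~\eqref{invert-mult} gives $\sigma(g_1)\sigma(g_2)=D(g_1,g_2)\,\sigma(g_1g_2)$ in $\widetilde{\g^0}$, which is~\eqref{2-coc}. The $2$-cocycle identity for $D$ is then automatic: it is the image under $\Psi$ of the associativity of multiplication in $\mathcal E$, or equivalently one checks it directly from the $\det$-multiplicativity~\eqref{invert-mult}.

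It remains to establish smoothness of $D:\g^0\times\g^0\to\C^*$, and this is the step I expect to be the main obstacle. The strategy is: the assignment $g\mapsto g_{++}$ and $g\mapsto$ (the holomorphic kernel of $g_{++}-g_{++}$, i.e.\ the off-diagonal corrections) is built, via the conformal welding decomposition of Proposition~\ref{diff-decomp} and the functional decomposition of Proposition~\ref{dec-funct}, out of the five-arrow composition~\eqref{arr}; each arrow depends smoothly on $g$ when the relevant operator spaces are given the Fréchet/Silva topologies of Section~\ref{sect-cont}, because the welding maps $f_\pm$ and the factors $d_\pm$, $w_\pm$ depend smoothly (indeed holomorphically in the parameter) on $g$ by the smooth structure put on $\DS$ and $\h^*$ in Section~\ref{Expl_descr}. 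Composing with the inversion map $T\mapsto(1+T)^{-1}$ on $1+\Homb(\h_+,\h_+)$, which is smooth by the estimates in Section~\ref{determin}, shows $(g_1,g_2)\mapsto(g_1)_{++}(g_2)_{++}((g_1g_2)_{++})^{-1}-1$ is a smooth map into $\Homb(\h_+,\h_+)$; then $\det$ is smooth on that space, again by Section~\ref{determin}. Hence $D$ is smooth. Finally, a smooth (possibly non-group) section of a central extension by $\C^*$, together with a smooth $2$-cocycle, endows the total space with the product smooth structure $\C^*\times\g^0$ and the twisted group law $(\zeta_1,g_1)(\zeta_2,g_2)=(\zeta_1\zeta_2 D(g_1,g_2),g_1g_2)$, making $\widetilde{\g^0}$ a Lie group modelled on Silva spaces; this is the standard reconstruction of a central extension from a smooth cocycle (cf.\ Remark~\ref{Rem-coh}).
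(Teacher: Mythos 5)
Your construction of $\sigma$ and the derivation of the cocycle are essentially the paper's: the section $g \mapsto (g,g_{++})$ through $\mathcal E$, the identity $(g_1g_2)_{++}=(g_1)_{-+}(g_2)_{+-}+(g_1)_{++}(g_2)_{++}$, Theorem~\ref{th-plmi} together with~\eqref{inv-comp} to place $(g_1)_{++}(g_2)_{++}((g_1g_2)_{++})^{-1}$ in $1+\Homb(\h_+,\h_+)$, and $\Psi$/multiplicativity of $\det$ for the cocycle identity --- all of this is correct and matches the intended argument (your direct computation $(g_1)_{++}(g_2)_{++}-(g_1g_2)_{++}=-(g_1)_{-+}(g_2)_{+-}$ is if anything a slight streamlining of~\eqref{form-2-coc}). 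The reconstruction of the Lie group structure on $\C^*\times\g^0$ from a smooth cocycle is also as in the paper.

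The smoothness step, however, has a genuine gap, and it is exactly the step you flagged. You propose to get smooth dependence of $g_{++}$ and of $((g_1g_2)_{++})^{-1}$ on $g$ from the five-arrow factorization~\eqref{arr}, claiming that the welding factors $f_\pm$ of Proposition~\ref{diff-decomp} and the multiplicative factors $d_\pm$, $w_\pm$ ``depend smoothly (indeed holomorphically) on $g$'' by the Lie group structure of Section~\ref{Expl_descr}. Nothing in the paper proves smooth dependence of the conformal welding on the diffeomorphism --- this is a delicate analytic fact, not a formal consequence of the chart on $\DS$ --- and moreover the intermediate space $\h(\Gamma)$ in~\eqref{arr} varies with $g$ (the curve $\Gamma=f_+(S^1)$ depends on $f$), so the assertion that ``each arrow depends smoothly on $g$'' is not even well posed without additional apparatus. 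There is also a logical jump in deducing smoothness of $(g_1,g_2)\mapsto(g_1)_{++}(g_2)_{++}((g_1g_2)_{++})^{-1}-1$ merely from smoothness of $T\mapsto(1+T)^{-1}$ on $1+\Homb(\h_+,\h_+)$: the operator $(g_{++})^{-1}$ is not itself of that form, so one must first explain why it depends smoothly on $g$. The paper's proof avoids welding entirely at this point: it proves that the block maps $g\mapsto g_{+-}$, $g\mapsto g_{-+}$, $g\mapsto g_{++}$ are smooth into the relevant kernel spaces directly from the holomorphic-function description of operators (second part of the proof of Proposition~\ref{holom}, method of Theorem~\ref{th-plmi}), and then uses the identity~\eqref{form-inverse}, $(g_{++})^{-1}=(g^{-1})_{++}\bigl(1-g_{-+}(g^{-1})_{+-}\bigr)^{-1}$, to reduce inversion of $g_{++}$ to the smooth group inversion $g\mapsto g^{-1}$ in $\g^0$ plus the smooth inversion on $1+\Homb(\h_+,\h_+)$; combining this with~\eqref{form-2-coc} and smoothness of $\det$ gives smoothness of $D$. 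Replacing your welding-based step by this argument closes the gap.
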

\begin{proof}
Consider the group $\g^0$ as the subgroup of the group ${\mathop{ \rm GL}}^+_{\rm res}(\h)$.
The determinant central extension was constructed from the exact sequence of groups~\eqref{exact-seq}. By Theorem~\ref{th-block},
over the subgroup $\g^0$ the map $\theta$ from~\eqref{exact-seq} has a natural (non-group) section
$$
\g^0 \ni g \, \longmapsto  \, (g,g_{++})  \in {\mathcal E}  \, \mbox{.}
$$
This section defines the (non-group) section $\sigma : \g^0  \to \widetilde{\g^0}$ of the map
$\widetilde{\g^0} \to \g^0 $
in sequence~\eqref{g-det}. By construction, the section $\sigma$ gives the  $2$-cocycle $D$
on the group $\g^0$ with values in the trivial $\g^0$-module $\C^*$
 given by the explicit formula~\eqref{2-coc}.

Now consider the $2$-cocycle $D$ in more detail. We note that
$$
(g_1 g_2)_{++} = (g_1)_{-+} (g_2)_{+-} + (g_1)_{++} (g_2)_{++}  \, \mbox{.}
$$
Therefore we obtain
\begin{multline} \label{form-2-coc}
(g_1)_{++} (g_2)_{++} ((g_1g_2)_{++})^{-1} = \left(  \left((g_1)_{-+} (g_2)_{+-} + (g_1)_{++} (g_2)_{++} \right) \left( (g_1)_{++} (g_2)_{++} \right)^{-1}    \right)^{-1} =   \\  =
\left( 1 + (g_1)_{-+} (g_2)_{+-} \left(  (g_1)_{++} (g_2)_{++}  \right)^{-1}  \right)^{-1} =  \\  =
\left( 1 + (g_1)_{-+} (g_2)_{+-} \left( (g_2)_{++}  \right)^{-1} \left((g_1)_{++}\right)^{-1}   \right)^{-1}
   \mbox{.}
\end{multline}
Hence, by Theorem~\ref{th-plmi}, formula\eqref{inv-comp} and Section~\ref{determin}, the determinant in formula~\eqref{2-coc} is well-defined.

For any $g \in \g^0$
 we have
 \begin{gather}
 g_{-+} (g^{-1})_{+-} + g_{++} (g^{-1})_{++}  =1  \, \mbox{,} \nonumber \\   \label{form-inverse} \mbox{and hence}  \quad
(g_{++})^{-1} = (g^{-1})_{++} \left(1 - g_{-+} (g^{-1})_{+-} \right)^{-1}  \, \mbox{.}
\end{gather}

\medskip

Now we will obtain from formulas~\eqref{form-2-coc} and~\eqref{form-inverse} that the $2$-cocycle $D$ is a smooth function
as the composition of the smooth functions.

Indeed, let us first note that the map $1+T  \mapsto (1+T)^{-1}$ is a smooth function when ${\det(1 +T )  \ne 0}$ (see Section~\ref{determin}).
Now consider on the   spaces ${\Homb}  ({\mathcal \h_+}, {\mathcal \h_-}) $ and
${\Homb}  ({\mathcal \h_-}, {\mathcal \h_+}) $ the inductive limit topology in the category of Hausdorff locally convex topological vector spaces as in
 Section~\ref{trace} for ${\Homb}  ({\mathcal \h_+}, {\mathcal \h_+}) $. Besides,
 in the category of Hausdorff locally convex topological vector spaces consider
 on the space  ${\Hom}  ({\mathcal \h_+}, {\mathcal \h_+})$
  the  inductive limit topology when the elements of ${\Hom}  ({\mathcal \h_+}, {\mathcal \h_+})$
 are considered as functions $f(z,w)$ by  item~\ref{it-3} of Proposition~\ref{holom},
 and where the index (directed) set of the corresponding  inductive (direct) system is the set of open subsets of $\C \times \widehat{\C}$ containing
 $\overline{D}_1 \times (\widehat{\C} \setminus \overline{D}_1) $, and the spaces in the direct system are the Fr\'echet spaces of holomorphic functions  on these open subsets with the condition as in item~\ref{it-3} of Proposition~\ref{holom} and with the topology of uniform convergence on compact
sets.

Further,
 from the second part of the proof of Proposition~\ref{holom} it follows that
the maps $g \mapsto g_{+-}$, $g \mapsto g_{-+}$,  $g \mapsto g_{++}$from $\g^0$ to  ${\Homb}  ({\mathcal \h_+}, {\mathcal \h_-}) $,
${\Homb}  ({\mathcal \h_-}, {\mathcal \h_+}) $  and ${\Hom}  ({\mathcal \h_+}, {\mathcal \h_+})$ are smooth functions correspondingly (see also the method of the proof of Theorem~\ref{th-plmi}). Finally, the determinant is a smooth function when we apply it to the right hand part of formula~\eqref{form-2-coc} (with the help of formula~\eqref{form-inverse}). Indeed,  we   first consider and apply these formulas to the spaces from  the direct  systems above, and the determinant is a smooth function when these spaces are inserted in formulas~\eqref{form-2-coc} and~\eqref{form-inverse}, since we use the continuous property of the composition (or the convolution) of these spaces and the property from Section~\ref{determin} that the determinant is a smooth function.

\end{proof}

\subsection{The Lie algebra $\mathop{\rm Lie} \g^0$ and the complex Lie algebra $\Lie_{\C} \g^0$}

Using Section~\ref{Expl_descr}, we have the explicit description of the tangent space at the identity element of the Lie group $\g^0$ and, in the standard way, the description of the corresponding Lie algebra
$\Lie \g^0$. This leads to the following propositions (see also the corresponding description in the formal case in~\cite[Prop.~2]{O2}).

\begin{prop}  \label{prop-Lie}
We have a canonical  isomorphism
$$
\Lie \g^0 \simeq \h \rtimes \mathop{\rm Vect}\nolimits_{\rm hol}(S^1) \, \mbox{,}
$$
where  $\h = \h(S^1) $ is an Abelian Lie algebra and  $\mathop{\rm Vect}\nolimits_{\rm hol}(S^1) $ is the Lie algebra of real analytic vector fields on $S^1$. Besides, there is a natural action of the Lie algebra
$\Lie \g^0$ on the space $\h$.
\end{prop}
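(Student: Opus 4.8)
The plan is to identify the Lie algebra of each of the two factors in the semidirect product $\g^0 = \h_0^* \rtimes \DS$ and then to assemble them, keeping track of the adjoint action.

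First I would compute $\Lie \h_0^*$. By Proposition~\ref{dec-funct} we have the group decomposition $\h_0^* = \C^* \times \h_{-,1}^* \times \h_{+,1}^*$, and the remark following it gives that $\exp$ induces an isomorphism of $\h_-$ onto $\h_{-,1}^*$ and of the closed subspace $\{h \in \h_+ : h(0) = 0\}$ onto $\h_{+,1}^*$, while $\Lie \C^* = \C$. Passing to tangent spaces at the identity and using $\h_+ = \C \cdot 1 \oplus \{h \in \h_+ : h(0)=0\}$ together with $\h = \h_- \oplus \h_+$ (see~\eqref{H-decomp}), this yields a canonical topological isomorphism $\Lie \h_0^* \simeq \h$. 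Since $\h_0^*$ is a commutative group (pointwise multiplication of $\C^*$-valued functions), the bracket on $\Lie \h_0^*$ vanishes, so $\h$ appears here as an abelian Lie algebra.

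Next I would compute $\Lie \DS$. By the construction in Section~\ref{anal_diff} the universal covering group $\WDS$ is, as a smooth manifold, the open subset $\mathcal L$ of the Silva space $\h_{\dr}$, with the identity diffeomorphism corresponding to $0 \in \mathcal L$; hence the tangent space at the identity is $\h_{\dr}$, the space of real analytic functions $S^1 \simeq \dr/\dz \to \dr$. Identifying $\varphi \in \h_{\dr}$ with the vector field $\varphi \frac{d}{dx}$ (equivalently, in the complex coordinate, with $2\pi i z\, \tilde\varphi(z)\frac{d}{dz}$ as in Section~\ref{anal_diff}) gives a linear isomorphism $\h_{\dr} \simeq \mathop{\rm Vect}\nolimits_{\rm hol}(S^1)$, and the target is closed under the commutator bracket $[\varphi \frac{d}{dx}, \psi \frac{d}{dx}] = (\varphi \psi' - \varphi' \psi)\frac{d}{dx}$ because products and derivatives of real analytic periodic functions are again such. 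Since the group law on $\WDS$ is composition of functions and $\DS$ has the same Lie algebra as $\WDS$ (central extension~\eqref{z-ext} by the discrete group $\dz$), the Lie bracket on $\Lie \DS$ is precisely this commutator bracket of vector fields; thus $\Lie \DS \simeq \mathop{\rm Vect}\nolimits_{\rm hol}(S^1)$.

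Finally I would assemble the semidirect product. From $\g^0 = \h_0^* \rtimes \DS$ one gets $\Lie \g^0 = \Lie \h_0^* \rtimes \Lie \DS$, where the action of $\mathop{\rm Vect}\nolimits_{\rm hol}(S^1)$ on $\h$ is obtained by differentiating the smooth action $\diamond$ of $\DS$ on $\h_0^*$ (equivalently on $\h$): for $v = \varphi\frac{d}{dx}$ and $h \in \h$ one finds $v \cdot h = -\varphi\frac{dh}{dx}$, the sign coming from $f \diamond h = h \circ f^{\circ -1}$, which indeed lands in $\h$. This gives the claimed isomorphism $\Lie \g^0 \simeq \h \rtimes \mathop{\rm Vect}\nolimits_{\rm hol}(S^1)$, and the natural action of $\Lie \g^0$ on $\h$ is obtained by differentiating the smooth action of $\g^0$ on $\h$ described after Definition~\ref{d1}: the $\h$-summand acts by multiplication operators, and the $\mathop{\rm Vect}\nolimits_{\rm hol}(S^1)$-summand acts by (minus) the Lie derivative along the vector field. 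The main technical point to be careful about is this bracket computation for $\Lie \DS$ together with the sign conventions in the semidirect-product action — one must confirm that the bracket induced by the group structure on the model $\mathcal L \subset \h_{\dr}$ is exactly the commutator of vector fields (rather than its negative, the convention-dependent subtlety for diffeomorphism groups), and that the differentiated $\diamond$-action is compatible with this choice; the remainder is bookkeeping with the explicit exponential maps and the product topologies, all routine given Section~\ref{Expl_descr}.
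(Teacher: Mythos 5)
Your proposal is correct and follows essentially the same route as the paper, which simply invokes the explicit charts of Section~\ref{Expl_descr} (the decomposition of $\h_0^*$ via $\exp$ and the model of $\WDS$ as the open set $\mathcal L\subset\h_{\dr}$) and differentiates the semidirect product ``in the standard way''; your sign discussion for the $\diamond$-action matches the paper's convention recorded in Remark~\ref{rem-sign} and Proposition~\ref{Lie_alg}. No gaps.
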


\begin{nt} \em
The Lie group $\g^0$ is the identity component of the Lie group $\g$. Therefore $\Lie \g = \Lie \g^0$.
\end{nt}

Consider the complexification $\mathop{\rm Vect}\nolimits_{\rm hol}(S^1)_{\C} = \mathop{\rm Vect}\nolimits_{\rm hol}(S^1) \otimes_{\dr} {\C}$ of the Lie algebra $\mathop{\rm Vect}\nolimits_{\rm hol}(S^1)$. We  consider a natural complex Lie algebra
$$
\Lie\nolimits_{\C} \g^0 = \h \rtimes \mathop{\rm Vect}\nolimits_{\rm hol}(S^1)_{\C}  \, \mbox{.}
$$

From Proposition~\ref{prop-Lie} and using that $\h = \h_{\dr} \otimes_{\dr}  \C$  (see  Section~\ref{anal_diff}) and $\frac{d}{dx} = 2 \pi i z \frac{d}{dz}$ when $z = \exp(2 \pi i x)$,  we immediately obtain the following corresponding properties for~$\Lie\nolimits_{\C} \g^0$.

\begin{prop} \label{Lie_alg}
The Lie algebra  $\mathop{\rm Vect}\nolimits_{\rm hol}(S^1)_{\C}$ is naturally isomorphic to the Lie algebra of derivations of type $- r \frac{d}{dz}$, where $r \in \h$, for the commutative associative algebra $\h$.
The Lie bracket in $\Lie\nolimits_{\C} \g^0$ is written as
\begin{equation}  \label{fo1}
\left[ s_1 - r_1   \frac{d}{dz}, s_2 - r_2 \frac{d}{d z} \right]  = \left(r_1 s_2' - r_2 s_1' \right) -  \left(r_1 r_2' - r_2 r_1' \right)\frac{d}{d z}  \, \mbox{,}
\end{equation}
where $s_i, r_i  \in \h$. Besides, the natural action  of the Lie algebra $\Lie\nolimits_{\C} \g^0$ on $\h$ is
\begin{equation}  \label{fo2}
\left(s - r   \frac{d}{d z} \right) \left(h \right) = sh + rh' \, \mbox{,} \qquad \mbox{where} \quad s,r,h \in \h  \, \mbox{.}
\end{equation}
\end{prop}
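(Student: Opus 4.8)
The plan is to deduce everything from Proposition~\ref{prop-Lie} by complexifying and passing to the coordinate $z$. Since $\h = \h_{\dr}\otimes_{\dr}\C$ (Section~\ref{anal_diff}), complexifying the isomorphism $\Lie\g^0\simeq\h\rtimes\mathop{\rm Vect}\nolimits_{\rm hol}(S^1)$ and the action of $\Lie\g^0$ on $\h$ gives $\Lie\nolimits_{\C}\g^0 = \h\rtimes\mathop{\rm Vect}\nolimits_{\rm hol}(S^1)_{\C}$ with $\h$ an abelian Lie algebra and with a natural action of the whole complex Lie algebra on $\h$ coming from the action of $\g^0$ on $\h$. It then remains to rewrite the vector-field part, the bracket, and the action in terms of $z$.

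First I would identify $\mathop{\rm Vect}\nolimits_{\rm hol}(S^1)_{\C}$ with the Lie algebra of derivations of the commutative algebra $\h$ of the stated form. A complexified real analytic vector field on $S^1 = \dr/\dz$ is an operator $v\,\frac{d}{dx}$ with $v\in\h$; since $\frac{d}{dx} = 2\pi i z\,\frac{d}{dz}$ under $z = \exp(2\pi i x)$ and $\frac{d}{dz}$ maps $\h$ into $\h$ (the derivative of a function holomorphic near $S^1$ is holomorphic near $S^1$), this operator equals $2\pi i z v\,\frac{d}{dz}$. As $z$ is invertible in $\h$ (it is $z\mapsto z$, with inverse $z\mapsto z^{-1}$ holomorphic on a neighbourhood of $S^1$), the assignment $v\mapsto r:=-2\pi i z v$ is a $\C$-linear bijection of $\h$, so $\mathop{\rm Vect}\nolimits_{\rm hol}(S^1)_{\C}$ is carried bijectively onto the set of derivations $-r\,\frac{d}{dz}$, $r\in\h$, of $\h$. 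The Leibniz rule for $\frac{d}{dz}$ shows this set is closed under the commutator of operators, with $[-r_1\frac{d}{dz},\,-r_2\frac{d}{dz}] = (r_1 r_2' - r_2 r_1')\frac{d}{dz}$, which is the $\mathop{\rm Vect}\nolimits$-$\mathop{\rm Vect}\nolimits$ part of~\eqref{fo1} up to sign — see the last paragraph.

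Next I would compute the action and the bracket inside $\Lie\nolimits_{\C}\g^0$. The abelian part $\h$ acts on $\h$ by multiplication (differentiating at the identity the multiplication action of $\h_0^*$ through $\exp$), so $s\in\h$ acts by $h\mapsto sh$; the vector-field part acts by differentiating $h\mapsto h\circ f^{\circ -1}$ along a one-parameter subgroup, which because of the inverse flips the sign and makes the element $-r\frac{d}{dz}$ act on $\h$ as the operator $r\frac{d}{dz}$, i.e. $h\mapsto rh'$. Adding the two contributions yields~\eqref{fo2}. For~\eqref{fo1}, the $\h$-$\h$ bracket vanishes; the $\mathop{\rm Vect}\nolimits$-$\h$ bracket is this action applied to the abelian part, contributing $r_1 s_2' - r_2 s_1'$; and the $\mathop{\rm Vect}\nolimits$-$\mathop{\rm Vect}\nolimits$ bracket is the bracket of $\Lie\DS$, which equals $-(r_1 r_2' - r_2 r_1')\frac{d}{dz}$. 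Collecting the three terms gives~\eqref{fo1}.

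The genuinely delicate point — and the only place where real care is needed — is the consistent bookkeeping of signs. The semidirect-product action of $\DS$ on $\h_0^*$ is through $f^{\circ -1}$, so the infinitesimal action of a vector field acquires a minus sign; correspondingly the Lie bracket of $\Lie\DS$ is the \emph{opposite} of the naive commutator of vector fields viewed as derivations of $\h$ (the standard sign for the Lie algebra of a diffeomorphism group, forced here by compatibility with that action). Once these conventions are fixed compatibly, the operator computation $[-r_1\frac{d}{dz},\,-r_2\frac{d}{dz}] = (r_1 r_2' - r_2 r_1')\frac{d}{dz}$ produces the term $-(r_1 r_2' - r_2 r_1')\frac{d}{dz}$ in~\eqref{fo1}, and likewise the sign flip in the action produces $+rh'$ in~\eqref{fo2}; everything else is a routine application of the Leibniz rule.
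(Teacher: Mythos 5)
Your proposal is correct and takes essentially the same route as the paper: the paper obtains the proposition directly from Proposition~\ref{prop-Lie} by complexifying via $\h = \h_{\dr}\otimes_{\dr}\C$ and substituting $\frac{d}{dx} = 2\pi i z \frac{d}{dz}$, with the sign conventions forced by the action $f\diamond h = h\circ f^{\circ -1}$ addressed in Remark~\ref{rem-sign}. Your explicit sign bookkeeping (the element $-r\frac{d}{dz}$ acting as $+r\frac{d}{dz}$, and the vector-field bracket being the opposite of the derivation commutator) is exactly the verification left implicit there and is consistent with~\eqref{fo1} and~\eqref{fo2}.
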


\begin{nt} \label{rem-sign} \em
Formulas~\eqref{fo1}-\eqref{fo2} differ by  signs from analogous formulas obtained in formal case in~\cite[Prop.~2]{O2} (but the Lie algebra itself will be isomorphic to the new Lie algebra obtained after we change these signs). The reason is that in~\cite{O2} we considered the automorphisms of the ring of functions on the formal punctured disk, and now we consider the diffeomorphisms of $S^1$ which act on functions by formula given before Definition~\ref{d1}.
\end{nt}

We will need a proposition.
\begin{prop}
The Lie algebra $\Lie\nolimits_{\C} \g^0$ is perfect.
\end{prop}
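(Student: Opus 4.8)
The plan is to show that the derived subalgebra $[\Lie_{\C}\g^0, \Lie_{\C}\g^0]$ contains both the abelian ideal $\h$ and a set of vector fields spanning $\mathop{\rm Vect}_{\rm hol}(S^1)_{\C}$, so that it is all of $\Lie_{\C}\g^0 = \h \rtimes \mathop{\rm Vect}_{\rm hol}(S^1)_{\C}$. I will work with the explicit bracket~\eqref{fo1}. The key observation is that $\h$, being the ring of $\C$-valued analytic functions on $S^1$ identified with analytic functions on $\dr$ of period $1$, is generated as a $\C$-algebra (in a topological/convergent sense sufficient for the elements appearing below) by functions whose derivatives are units; concretely $z = \exp(2\pi i x)$ satisfies $z' = z/z \cdot \ldots$ — more precisely, with $\frac{d}{dx} = 2\pi i z \frac{d}{dz}$ one has that $\frac{d}{dx}$ sends $\exp(2\pi i n x)$ to $(2\pi i n)\exp(2\pi i n x)$, so the derivative operator is invertible on the codimension-one subspace of functions with zero mean, and multiplication by the unit $z$ is available.

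First I would produce $\h$ inside the derived algebra. Take $s_1 = 0$, $r_1 = -1$ (the constant function, so the vector field $\frac{d}{dz}$, or in the $x$-coordinate a nonzero multiple of $\frac{d}{dx}$) and $s_2 = s$ arbitrary, $r_2 = 0$; then~\eqref{fo1} gives $[\,-(-1)\tfrac{d}{dz},\, s\,] = [\,\tfrac{d}{dz}, s\,]$, and working it out via~\eqref{fo1} with $s_1=0,r_1=-1,r_2=0$ yields the term $r_1 s_2' = -s'$ in the $\h$-component and nothing in the vector-field component. Hence $s' \in [\Lie_{\C}\g^0,\Lie_{\C}\g^0]$ for every $s \in \h$. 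The derivative map $s \mapsto s'$ on $\h$ has image exactly the subspace $\h_0 \subset \h$ of functions with zero mean (equivalently, with zero residue/constant Fourier coefficient), since $(\exp(2\pi i nx))' $ is a nonzero multiple of $\exp(2\pi i nx)$ for $n\neq 0$ and $1' = 0$. So $\h_0 \subseteq [\Lie_{\C}\g^0,\Lie_{\C}\g^0]$. To capture the remaining constants, bracket $s_1 = 0, r_1 = -z$ (vector field $z\frac{d}{dz}$, which lies in $\mathop{\rm Vect}_{\rm hol}(S^1)_{\C}$) with $s_2 = z^{-1}, r_2 = 0$: then the $\h$-component of the bracket is $r_1 s_2' = (-z)(-z^{-2}) = z^{-1}$, again in $\h_0$ — so that alone does not help; instead bracket $r_1 = -z^{-1}$ with $s_2 = z$ to get $r_1 s_2' = (-z^{-1})(1) = -z^{-1}\in\h_0$ still. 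The clean way is: the constant function $1$ equals $\frac{1}{2\pi i}\cdot z^{-1}\cdot z'$ is not a derivative, but note $1 = (z\cdot z^{-1})$ and more usefully $1 = \frac{d}{dx}(\text{something})$? No. So I would instead observe $1 = z^{-1}\cdot z$ and use a two-term bracket: take $s_1 = 0, r_1 = -z, s_2 = 0, r_2 = -z^{-1}$; then~\eqref{fo1} gives the vector-field component $-(r_1 r_2' - r_2 r_1') \frac{d}{dz} = -\big(z\cdot z^{-2} - z^{-1}\big)\frac{d}{dz} = -\big(z^{-1} - z^{-1}\big)\frac{d}{dz} = 0$, unhelpful; retry with $r_1 = -z$, $r_2 = -1$: vector-field component $-(r_1 r_2' - r_2 r_1')\frac{d}{dz} = -(0 - (-1)(-1))\frac{d}{dz} = -\,\frac{d}{dz}$, so $\frac{d}{dz}\in[\Lie_{\C}\g^0,\Lie_{\C}\g^0]$, and with $s_1 = 1, r_1 = 0, s_2 = 0, r_2 = -1$ one gets $\h$-component $-r_2 s_1' = 0$; to get the constant $1\in\h$ I bracket $r_1 = -1$ (i.e. element $\frac{d}{dz}$) with $s_2 = z$: $\h$-component $= r_1 s_2' = -1$. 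Hence $1\in[\Lie_{\C}\g^0,\Lie_{\C}\g^0]$, and combined with $\h_0$ this gives all of $\h$.

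Next I would produce all of $\mathop{\rm Vect}_{\rm hol}(S^1)_{\C}$, i.e. all derivations $-r\frac{d}{dz}$ with $r\in\h$ (Proposition~\ref{Lie_alg}). Bracket $s_1 = 0, r_1 = -z$ (element $z\frac{d}{dz}$) with $s_2 = 0, r_2 = -r$: the vector-field component of~\eqref{fo1} is $-(r_1 r_2' - r_2 r_1')\frac{d}{dz} = -\big(z\,r' - r\big)\frac{d}{dz}$. As $r$ ranges over $\h$, the map $r \mapsto z r' - r$ has, by an elementary Fourier-coefficient computation ($z r' - r$ sends $z^n \mapsto (n-1)z^n$), image equal to the subspace of $\h$ spanned by all $z^n$ with $n\neq 1$; the single missing direction $z^1$ is recovered by bracketing $r_1 = -1$ (element $\frac{d}{dz}$) with $r_2 = -z^2$ (element $z^2\frac{d}{dz}$): vector-field component $-(r_1 r_2' - r_2 r_1')\frac{d}{dz} = -\big((-1)(2z) - 0\big)\frac{d}{dz}$... let me instead bracket $\frac{d}{dz}$ with $z^2\frac{d}{dz}$ directly: $[-\tfrac{d}{dz}\cdot(-1)?]$ — using $r_1=-1, r_2=-z^2$ the formula gives $-\big(r_1 r_2' - r_2 r_1'\big) = -\big((-1)(-2z)-(-z^2)(0)\big) = -2z$, hence $z\frac{d}{dz}$ (up to scalar) — no, that is degree $1$, which is what I wanted. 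So $\mathop{\rm Vect}_{\rm hol}(S^1)_{\C}\subseteq[\Lie_{\C}\g^0,\Lie_{\C}\g^0]$. Therefore $[\Lie_{\C}\g^0,\Lie_{\C}\g^0] \supseteq \h + \mathop{\rm Vect}_{\rm hol}(S^1)_{\C} = \Lie_{\C}\g^0$, proving the algebra is perfect.

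The main obstacle — really the only subtlety — is the bookkeeping of the cokernels of the two linear maps $r\mapsto r'$ (on the abelian part) and $r\mapsto z r' - r$ (on the vector-field part): each has a one-dimensional cokernel (spanned by the constant and by $z^1$ respectively), and one must exhibit one extra explicit bracket covering each missing line, as sketched above. All of this is a finite, elementary computation with the bracket~\eqref{fo1} evaluated on the elements $1, z, z^{-1}, z^2$; no analytic input beyond the identification of $\h$ with analytic functions on $S^1$ and the fact (already in the excerpt) that $\frac{d}{dx} = 2\pi i z\frac{d}{dz}$ is needed. I would present the final write-up by first fixing the four generating elements, then displaying the handful of brackets in one \texttt{align}-free computation, and concluding with the inclusion $\h + \mathop{\rm Vect}_{\rm hol}(S^1)_{\C} \subseteq [\Lie_{\C}\g^0, \Lie_{\C}\g^0]$.
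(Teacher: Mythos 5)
Your route is genuinely different from the paper's: the paper first checks, via the identity $h\left[ s_1 - r_1   \frac{d}{d z}, s_2 - r_2 \frac{d}{d z} \right] =
\left[ - h r_1  \frac{d}{d z},  s_2 - \frac{1}{2} r_2 \frac{d}{dz}   \right] +
\left[   s_1 -  \frac{1}{2} r_1   \frac{d}{d z}, - h r_2 \frac{d}{d z}    \right]$, that the derived subalgebra is an $\h$-module, after which only the two brackets $\frac{d}{dz}=\left[-\frac{d}{dz}, z\frac{d}{dz}\right]$ and $1=\left[-\frac{d}{dz}, z\right]$ are needed; no image/cokernel or convergence considerations ever arise. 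Your plan (sweep out $\h$ and the vector fields by the images of explicit first-order operators and patch the one-dimensional cokernels) can be made to work, but as written it contains a concrete error in the abelian part. In formula~\eqref{fo1} the prime is $\frac{d}{dz}$, not $\frac{d}{dx}$, so the image of $s\mapsto s'$ on $\h$ is the set of Laurent series with vanishing $z^{-1}$-coefficient (zero residue), not the functions with vanishing constant term: $(z^n)'=nz^{n-1}$ is not a multiple of $z^n$, the constant $1=(z)'$ is already a derivative, and it is $z^{-1}$ that is not, its antiderivative $\log z$ being multivalued on $S^1$. "Zero mean" and "zero residue" are therefore not equivalent, and your justification via the eigenvalue computation for $\exp(2\pi i n x)$ silently uses $\frac{d}{dx}$. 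Consequently, adjoining the constant $1$ to the image of $s\mapsto s'$ does not give all of $\h$; the genuinely missing direction is $z^{-1}$.

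Ironically, you wrote down exactly the bracket that fills this gap and then discarded it: with $r_1=-z$, $s_2=z^{-1}$ the bracket $\left[z\frac{d}{dz}, z^{-1}\right]$ has $\h$-component $z^{-1}$ and zero vector-field component, so $z^{-1}$ lies in the derived subalgebra; reinstating it (or, alternatively, using the whole family $\left[z\frac{d}{dz}, s\right]$, whose $\h$-components $-zs'$ give every function with zero constant coefficient, together with $1=(z)'$) repairs the argument. The vector-field half of your proof is fine, up to a sign slip in computing $\left[\frac{d}{dz}, z\frac{d}{dz}\right]$ (harmless) and the small point that surjectivity of $r\mapsto zr'-r$ onto the functions with vanishing $z$-coefficient requires observing that dividing Laurent coefficients by $n-1$ preserves analyticity on an annulus --- a convergence remark (needed likewise for antiderivatives in the abelian part) that the paper's module-theoretic argument avoids entirely.
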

\begin{proof}
We note that $\left[  \Lie\nolimits_{\C} \g^0 , \Lie\nolimits_{\C} \g^0    \right]$ is an $\h$-module, because
$$
h \left[ s_1 - r_1   \frac{d}{d z}, s_2 - r_2 \frac{d}{d z} \right] =
\left[ - h r_1  \frac{d}{d z},  s_2 - \frac{1}{2} r_2 \frac{d}{dz}   \right] +
\left[   s_1 -  \frac{1}{2} r_1   \frac{d}{d z}, - h r_2 \frac{d}{d z}    \right]   \, \mbox{,}
$$
where $h, r_i, s_i \in \h$. Therefore it is enough to show that elements $1$ and $\frac{d}{d z}$ belong to
$\left[  \Lie\nolimits_{\C} \g^0 ,   \Lie\nolimits_{\C} \g^0   \right]$. We have
$$
\frac{d}{d z} = \left [ - \frac{d}{d z}, z \frac{d}{d z}         \right]  \qquad \mbox{and}
\qquad
1 = \left [  -\frac{d}{d z}, z        \right]  \, \mbox{.}
$$

Hence we have
$$
\left[  \Lie\nolimits_{\C} \g^0 , \Lie\nolimits_{\C} \g^0     \right] = \Lie\nolimits_{\C} \g^0  \, \mbox{.}
$$
\end{proof}

Let $\overline{\g^0}$ be the universal covering Lie group of the Lie group $\g^0$. Clearly, ${\Lie \overline{\g^0} = \Lie {\g^0} }$.

\begin{Th}  \label{smooth}
Any smooth homomorphism from the  Lie group $\g^0$ or from the Lie group $\overline{\g^0}$ to the Lie group $\C^*$ is trivial.
\end{Th}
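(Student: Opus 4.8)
The plan is to pass to Lie algebras and use the fact, just established, that $\Lie_{\C}\g^0$ is perfect. A smooth homomorphism $\chi : \g^0 \to \C^*$ (or from $\overline{\g^0}$) differentiates to a continuous Lie algebra homomorphism $d\chi : \Lie\g^0 \to \Lie\C^* = \C$. Since $\C$ is abelian, $d\chi$ kills the commutator subalgebra $[\Lie\g^0,\Lie\g^0]$. Complexifying, $d\chi$ extends to a $\C$-linear map $\Lie_{\C}\g^0 \to \C$ that kills $[\Lie_{\C}\g^0,\Lie_{\C}\g^0] = \Lie_{\C}\g^0$ by the preceding proposition; hence $d\chi = 0$.

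Next I would upgrade $d\chi = 0$ to $\chi = 1$. For the universal covering group $\overline{\g^0}$ this is immediate: a connected, simply connected Lie group is generated by the image of the exponential map (more precisely, by any neighbourhood of the identity, which is reached by one-parameter subgroups), and $\chi$ restricted to a one-parameter subgroup $t \mapsto \exp(tX)$ is $t \mapsto \exp(t\, d\chi(X)) = 1$; so $\chi$ is trivial on a neighbourhood of $1$, hence everywhere on the connected group $\overline{\g^0}$. For $\g^0$ itself one first notes that $\g^0$ is connected (it is the identity component of $\g$) and then uses the same one-parameter-subgroup argument: $\chi$ is trivial on a neighbourhood of the identity, and a connected topological group is generated by any neighbourhood of its identity element, so $\chi \equiv 1$. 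Equivalently, $\chi$ factors through $\overline{\g^0}$ and we have already handled that case.

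The step that requires genuine care — and which I expect to be the main obstacle — is the passage from the smooth homomorphism $\chi$ to a \emph{continuous} Lie algebra homomorphism $d\chi$, together with the validity of the exponential/one-parameter-subgroup machinery in the infinite-dimensional setting. Here $\g^0 = \h_0^* \rtimes \DS$ is a Lie group modelled on Silva spaces, not a finite-dimensional or Banach Lie group, so one must invoke the appropriate facts from the theory of locally convex Lie groups: a smooth homomorphism has a well-defined differential which is a continuous homomorphism of the (locally convex) Lie algebras, one-parameter subgroups $t \mapsto \chi(\exp(tX))$ are smooth homomorphisms $\dr \to \C^*$ hence of the form $t \mapsto e^{ct}$ with $c = d\chi(X)$, and a connected locally convex Lie group is generated by any identity neighbourhood. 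All of these hold in the generality of the survey \cite{Neeb1} referenced in the paper; the only subtlety specific to our $\g^0$ is that its model spaces are Silva spaces and that the exponential map need not be locally surjective for such groups — but this is not needed, since to conclude triviality of $\chi$ near the identity it suffices that the one-parameter subgroups through $1$ already exhaust a generating set, which follows from smoothness of the group multiplication and the chain rule (the derivative of $\chi$ at $1$ vanishes, so $\chi$ is constant to first order, and being a homomorphism valued in the abelian group $\C^*$ it is then constant on the subgroup generated by any connected identity neighbourhood on which this first-order analysis applies). I would therefore structure the write-up as: (i) differentiate, (ii) invoke perfectness of $\Lie_{\C}\g^0$ to get $d\chi = 0$, (iii) conclude $\chi \equiv 1$ using connectedness and the one-parameter subgroup argument, citing \cite{Neeb1} for the locally convex Lie theory used in (i) and (iii).
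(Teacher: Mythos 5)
Your steps (i)--(ii) are exactly the paper's argument: differentiate at the identity and use perfectness of $\Lie\nolimits_{\C}\g^0$ to kill the differential (the paper extends $(d\chi)_e$ by $\C$-linearity only on the $\mathop{\rm Vect}\nolimits_{\rm hol}(S^1)$-part and regards the result as an $\dr$-linear Lie algebra homomorphism to the abelian algebra $\C$ --- your claim that the extension is $\C$-linear is inessential, what matters is that it annihilates brackets). The genuine gap is in step (iii). For these Silva-modelled groups, and in particular for $\DS$ just as for ${\rm Diff}^+(S^1)$, the exponential map is \emph{not} locally surjective, so ``any neighbourhood of the identity is reached by one-parameter subgroups'' is false; and the weaker claim you substitute --- that elements lying on one-parameter subgroups generate $\overline{\g^0}$ --- is a nontrivial assertion you neither prove nor cite. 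Your fallback remark (``the derivative of $\chi$ at $1$ vanishes, so $\chi$ is constant to first order, and being a homomorphism it is then constant on the subgroup generated by a connected identity neighbourhood'') is not a proof: vanishing of the differential at the single point $e$ gives no local constancy, so nothing yet forces $\chi$ to be trivial on \emph{any} identity neighbourhood.

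The paper closes this gap without any use of $\exp$: since $\tau$ is a homomorphism, $\tau\circ r_v=r_{\tau(v)}\circ\tau$ for right translations, hence $(d\tau)_v\circ(dr_v)_e=(dr_{\tau(v)})_1\circ(d\tau)_e=0$; as $(dr_v)_e$ is an isomorphism, $(d\tau)_v=0$ for every $v\in\overline{\g^0}$, so $\tau$ is locally constant, and connectedness together with $\tau(e)=1$ gives $\tau\equiv 1$. The statement for $\g^0$ then follows by precomposing with the surjection $\overline{\g^0}\to\g^0$ (this is also the correct way to phrase your ``factors through'' remark). If you replace your one-parameter-subgroup step by this translation argument, your proof becomes complete and coincides with the paper's.
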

\begin{proof}
Since the natural homomorphism $\overline{\g^0}  \to \g^0$ is surjective, it is enough to prove the statement of the theorem for $\overline{\g^0}$.

Let $\tau$ be a smooth homomorphism from $\overline{\g^0}$ to $\C^*$.

For any point $v \in \overline{\g^0}$ we will prove that the differential $(d \tau)_v$ is zero as the map from the tangent space $T_v \overline{\g^0}$
to the tangent space $T_{\tau(v)} \C^*$, where we identify the last tangent space with $\C$ via embedding $\C^* \subset \C$.

If $v =e$ is the identity element of $\overline{\g^0}$, then $(d \tau )_e$ is the homomorphism from the Lie algebra $\Lie \g^0$ to the Abelian Lie algebra $\C$. The homomorphism $(d \tau )_e$ restricted to
the Lie algebra  $\mathop{\rm Vect}\nolimits_{\rm hol}(S^1)$ can be uniquely extended by $\C$-linearity to the Lie algebra  $\mathop{\rm Vect}\nolimits_{\rm hol}(S^1)_{\C}$.
With this extension, the  homomorphism $(d \tau )_e$ can be uniquely extended to the $\dr$-linear homomorphism from the Lie algebra $\Lie_{\C} \g^0$ to the Lie algebra $\C$. But the last homomorphism will be zero, since $
\left[  \Lie\nolimits_{\C} \g^0 , \Lie\nolimits_{\C} \g^0     \right] = \Lie\nolimits_{\C} \g^0
$  and $\left[ \C, \C  \right] = 0$. Hence $(d \tau )_e$ is zero.

Now for any $v \in \overline{\g^0}$ we have $(d \tau)_v = 0$, since $\tau$ is the homomorphism and
 therefore the following diagram is commutative
 $$
 {\xymatrix{
   {T_e \overline{\g^0}} \ar[d]_{(d \tau)_e}  \ar[rr]^{\left(d r_v\right)_e} &&  {T_v \overline{\g^0}} \ar[d]^{(d \tau)_v}  \\
   T_{1} {\C^*} \ar[rr]_{\left(d r_{\tau(v)}\right)_1}   && T_{\tau(v)} {\C^*}
 }}
 $$
 where $r_v$ or $r_{\tau(v)}$ is the multiplication on the right by the element $v$ or $\tau(v)$,
 and the horizontal arrows are isomorphisms.

Hence $\tau$ is a locally constant map, and since $\overline{\g^0}$ is connected and $\tau(e) =1$, we have that $\tau $ is trivial.

\end{proof}

\subsection{The determinant central extension of Lie group $\g$}
\label{ext-det}

\begin{prop}  \label{semi}
There is a canonical group decomposition
$$
\g = \g^0 \rtimes \Z  \, \mbox{,}
$$
where  we consider the composition of embeddings $\Z \hookrightarrow \h^*  \hookrightarrow \g$, in which the first embedding is given by  decomposition~\eqref{dec1}.
\end{prop}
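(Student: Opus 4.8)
The plan is to realize the asserted decomposition as an \emph{internal} semidirect product: I will produce a group homomorphism $\g \to \Z$ whose kernel is exactly $\g^0$ and which is split by the composite embedding $\Z \hookrightarrow \h^* \hookrightarrow \g$, and then apply the standard recognition criterion for semidirect products.

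First I would define $\nu_\g \colon \g \to \Z$ by $\nu_\g((d,f)) = \nu(d)$, where $\nu$ is the winding number homomorphism from Section~\ref{funct}. To check that $\nu_\g$ is a homomorphism I would use the multiplication law $(d_1,f_1)(d_2,f_2) = (d_1 \cdot (f_1 \diamond d_2), f_1 \circ f_2)$ in $\g = \h^* \rtimes \DS$: since $\nu$ is a homomorphism on $\h^*$ and the winding number is invariant under the $\DS$-action $f \diamond h = h \circ f^{\circ -1}$ — because $\nu(h)$ equals the degree of $\arg(h)$ (see Section~\ref{funct}) and precomposition with an orientation-preserving diffeomorphism of $S^1$ does not change the degree — one gets $\nu_\g((d_1,f_1)(d_2,f_2)) = \nu(d_1) + \nu(f_1 \diamond d_2) = \nu(d_1) + \nu(d_2)$. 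Hence $\Ker \nu_\g = \{(d,f) : \nu(d) = 0\} = \h_0^* \rtimes \DS = \g^0$, so $\g^0$ is a normal subgroup of $\g$. Next I would observe that the map $\Z \to \g$, $n \mapsto (z^n, \Id)$ — which is precisely the composite of the embedding $\Z \hookrightarrow \h^*$ from decomposition~\eqref{dec1} (recall $\nu(z) = 1$) with the inclusion $\h^* \hookrightarrow \g$ — is a group homomorphism, because the identity diffeomorphism acts trivially and hence $(z^m,\Id)(z^n,\Id) = (z^m \cdot (\Id \diamond z^n),\Id) = (z^{m+n},\Id)$; and it is a section of $\nu_\g$ since $\nu_\g((z^n,\Id)) = \nu(z^n) = n$. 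In particular this map is injective and its image meets $\Ker \nu_\g = \g^0$ only in the identity.

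Finally, to see that $\g^0$ together with the image of $\Z$ generates $\g$, given $(d,f) \in \g$ I would set $n = \nu(d)$ and $e = d \cdot (f \diamond z^{-n}) \in \h^*$ (using that the $\DS$-action preserves $\h^*$); then $\nu(e) = \nu(d) - n = 0$, so $(e,f) \in \g^0$, and since $h \mapsto f \diamond h$ is multiplicative one computes $(e,f)(z^n,\Id) = (e \cdot (f \diamond z^n),\, f) = (d \cdot (f \diamond z^{-n}) \cdot (f \diamond z^n),\, f) = (d, f)$. Together with the normality of $\g^0$ and the triviality of the intersection, this establishes the internal semidirect product $\g = \g^0 \rtimes \Z$, and it is canonical because both $\nu_\g$ and the splitting $n \mapsto (z^n,\Id)$ are canonical. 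I do not expect a genuine obstacle in this argument; the only point requiring a word of care is the $\DS$-invariance of the winding number, which is exactly where the orientation-preservation hypothesis on $\DS$ enters.
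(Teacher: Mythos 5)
Your proof is correct and in substance the same as the paper's: both rest on the fact that the winding number is unchanged under the $\DS$-action $f\diamond h = h\circ f^{\circ -1}$ (equivalently, a diffeomorphism in $\DS$ viewed as a map $S^1\to\C^*$ has winding number $1$), combined with decomposition~\eqref{dec1}. The only difference is packaging: the paper records just the conjugation computation $(z,1)(d,f)(z,1)^{-1}=(z\,d\,(f\diamond z^{-1}),f)\in\g^0$ to get normality, leaving the rest implicit, while you exhibit $\g^0$ as the kernel of the homomorphism $(d,f)\mapsto\nu(d)$ and verify the splitting, trivial intersection and generation explicitly --- an equivalent, slightly more complete verification.
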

\begin{proof}
Consider $(d,f) \in \g^0$, where $d \in \h_0^*$, $f \in \DS$. In the group $\g$ we have
$$
 (z,1) (d, f) (z, 1)^{-1} =  (z \, d, f)(z^{-1}, 1)  = (z \, d  \, (f^{\circ -1})^{-1}, f)
\, \mbox{,}
$$
where $(z \, d  \, (f^{\circ -1})^{-1}, f) \in \g^0$, since $\nu(z \, d  \, (f^{\circ -1})^{-1})=0$.
\end{proof}

\bigskip

We will construct the determinant central extension of Lie group $\g$  by $\C^* $ which, restricted to its connected component $\g^0$, coincides with the determinant central extension of  $\g^0$ constructed in Section~\ref{det-g0}.

We will always suppose that a central extension of a Lie group by another Lie group is a principal bundle.

We recall (cf.~\cite[1.7. Construction]{BD}) that   a central extension $\widetilde{V}$ of a Lie group  ${V = V_1 \rtimes V_2}$
by  a Lie group  $Q$ is equivalent to the following data:
\begin{itemize}
\item[1)]   a central extension of $V_2$ by $Q$;
\item[2)]   a central extension $\widetilde{V_1}$ of $V_1$ by $Q$;
\item[3)] an action of the Lie group $V_2$ on the Lie  group  $\widetilde{V_1}$, lifting the action of $V_2$ on $V_1$ and  trivial on $Q$.
\end{itemize}

\medskip

We have $\g = \g^0 \rtimes \Z$. Clearly, any central extension of the discrete group $\Z$ is trivial. Therefore, to
extend the determinant central extension from $\g^0$ to $\g$, it is enough to  lift the action of $1 \in \dz$   on $\g^0$
to  an action on   $\widetilde{{\g^0}}$.
 The action of $1 \in \dz  $ on $\h$
is the multiplication by the element $z$, and the action on $\g^0 $ is by means of the conjugation by $z$ (see also the proof of Proposition~\ref{semi}).

Now we consider the following map on  the subgroup $\theta^{-1}(\g^0)$ of the group $\mathcal E$ in sequence~\eqref{exact-seq}:
\begin{equation}  \label{lift}
(g,r)  \longmapsto (z g z^{-1}, r_z ) \, \mbox{,}
\end{equation}
where $r_z \, :  \, \h_+  \to  \h_+$ equals to ${\rm id}  \oplus z r z^{-1} $ for the decomposition  $\h_+ = \C \oplus z \h_+$.

The map~\eqref{lift} is an injective endomorphism of the group $\theta^{-1}(\g^0)$, but this endomorphism is not surjective. But it is easy to see that the induced endomorphism of the group $ \widetilde{{\g^0}}$ is an automorphism.
Moreover, from the description of the Lie group  $ \widetilde{{\g^0}}$ given in Theorem~\ref{th-3} it follows that this automorphism is a Lie group automorphism.
This automorphism is the lift of the action of  $1 \in   \dz$ to an action on the Lie group $\widetilde{{\g^0}}$.

We call this central extension {\em the determinant central extension} of the group $\g$:
\begin{equation}  \label{det-centr}
1 \lrto \C^*  \lrto \widetilde{\g}  \lrto \g \lrto 1  \, \mbox{.}
\end{equation}

We have the following uniqueness result.

\begin{Th}  \label{Th5}
The determinant central extension~\eqref{det-centr}
is a unique (up to isomorphism) Lie group central extension of $\g$ by $\C^*$ such that its restriction to $\g^0$ coincides with the central extension from Definition~\ref{centr-det-g0}.
\end{Th}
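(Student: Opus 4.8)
The plan is to reduce the statement to the classification data for central extensions of a semidirect product recalled just before the theorem, namely the triple (1)--(3): a central extension of $\dz$ by $\C^*$, a central extension of $\g^0$ by $\C^*$, and a lift of the $\dz$-action on $\g^0$ to an action on the chosen extension of $\g^0$, trivial on $\C^*$. Two central extensions of $\g = \g^0 \rtimes \dz$ whose restrictions to $\g^0$ agree (with the determinant extension $\widetilde{\g^0}$) therefore differ only in the choice of item (1) and the choice of lift in item (3). Since $\dz$ is a discrete free group, every central extension of $\dz$ by $\C^*$ is trivial (and any two trivializations differ by a homomorphism $\dz \to \C^*$, i.e.\ by an element of $\C^*$), so item (1) contributes nothing new up to isomorphism. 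Thus the whole question is: given the determinant extension $\widetilde{\g^0}$, how many lifts of the generator $1 \in \dz$ to a Lie group automorphism of $\widetilde{\g^0}$ exist that cover the conjugation-by-$z$ automorphism of $\g^0$ and restrict to the identity on $\C^*$, and do distinct such lifts give non-isomorphic extensions of $\g$?

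First I would fix the explicit lift constructed in Section~\ref{ext-det} via formula~\eqref{lift}, call its induced Lie group automorphism $\alpha$ of $\widetilde{\g^0}$. Any other admissible lift has the form $v \mapsto \beta(\alpha(v))$ where $\beta$ is a Lie group automorphism of $\widetilde{\g^0}$ covering $\mathrm{id}_{\g^0}$ and equal to the identity on the central $\C^*$. Such a $\beta$ is given by $\beta(v) = \mu(\bar v)\cdot v$, where $\bar v$ is the image of $v$ in $\g^0$ and $\mu : \g^0 \to \C^*$ is a smooth map; the condition that $\beta$ be a homomorphism forces $\mu$ to be a smooth homomorphism $\g^0 \to \C^*$. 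By Theorem~\ref{smooth} every such homomorphism is trivial, hence $\mu \equiv 1$, hence $\beta = \mathrm{id}$ and the lift is unique. (Here one uses that the extension splits over $\C^*$ in the obvious sense and that a bundle automorphism of a $\C^*$-central extension covering the identity and fixing $\C^*$ is exactly multiplication by a central-valued function which must be a character.) It remains to account for the freedom in item (1): replacing the trivialization of the extension of $\dz$ changes the lift of $1\in\dz$ by an element $c \in \C^* \subset \widetilde{\g^0}$ acting by left translation; but conjugating the total extension $\widetilde{\g}$ by the element that sends the chosen generator over $1\in\dz$ to $c$ times itself is an isomorphism of central extensions of $\g$ fixing $\widetilde{\g^0}$, so this freedom does not change the isomorphism class either. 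Assembling these observations: any central extension of $\g$ by $\C^*$ restricting to $\widetilde{\g^0}$ over $\g^0$ is isomorphic, as a central extension, to the one built in Section~\ref{ext-det}, which is~\eqref{det-centr}.

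The main obstacle I expect is the bookkeeping in the reduction step: one must be careful that ``restriction coincides with the determinant central extension of $\g^0$'' is used as an equality of central extensions (not merely of abstract Lie groups), so that the data (1)--(3) of the Beilinson--Drinfeld-type construction really are pinned down except for (1) and the lift in (3); and one must check that the Lie group structure (not just the abstract group structure) is respected, i.e.\ that the candidate automorphisms $\beta$ are automatically smooth — this is where the explicit description of $\widetilde{\g^0}$ as $\C^* \times \g^0$ topologically, from Theorem~\ref{th-3}, together with smoothness of $\mu$, is needed. Once the problem is correctly reduced to computing smooth homomorphisms $\g^0 \to \C^*$, Theorem~\ref{smooth} finishes it immediately; the rest is verifying that the residual $\C^*$-ambiguity coming from item (1) is absorbed by an isomorphism of extensions of $\g$.
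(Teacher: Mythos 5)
Your proposal is correct and follows essentially the same route as the paper: reduce via the decomposition $\g=\g^0\rtimes\dz$ and the data (1)--(3) for extensions of a semidirect product, note that central extensions of the discrete group $\dz$ are trivial, and show the lift of the $\dz$-action to $\widetilde{\g^0}$ is unique because two lifts differ by an automorphism covering the identity on $\g^0$ and $\C^*$, i.e.\ by a smooth homomorphism $\g^0\to\C^*$, which is trivial by Theorem~\ref{smooth}. The extra bookkeeping you add (absorbing the residual $\C^*$-ambiguity and checking smoothness via the topological description $\C^*\times\g^0$) is consistent with, and only slightly more detailed than, the paper's argument.
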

\begin{proof}
We claim that a lift of any automorphism of the Lie group $\g^0$ to the Lie group $\widetilde{{\g^0}}$ with the identity action on $\C^*$ is unique (after the discussion before the theorem,  it is enough to prove only this statement). Indeed,  any two possible such lifts will differ by the Lie group
automorphism of $\widetilde{{\g^0}}$ that induces the identity action on $\C^*$ and  $\g^0$. But any such automorphism is identified with the smooth
 homomorphism from $\g^0$ to $\C^*$, and by Theorem~\ref{smooth} this homomorphism is trivial.

Now the theorem follows
from~Proposition~\ref{semi} and the above discussion on central extensions of semidirect products.
\end{proof}

\begin{nt} \label{sect-g} \em
Form the construction it follows that the topological space of $\widetilde{\g}$ (without the group structure) is isomorphic to $\C^* \times \g$.
\end{nt}

\section{Explicit $2$-cocycles on $\g$}
\label{expl-2-coc}

\subsection{Bimultiplicative pairing}  \label{bimult}

Following the papers of A.~A.~Beilinson and P.~Deligne, see~\cite{Be} and~\cite[\S~2.7]{D2}, we consider the following pairing.

For any two  functions $f$ and  $g$ from
from the group $C^{\infty}(S^1, \C^*)$ of smooth functions from
$S^1 $ (where $S^1$ is considering in $\C$ as in Section~\ref{curve}) to $\C^*$, we consider  ${\mathbb T}(f,g)$ from $\C^*$:
\begin{equation}  \label{expli}
{\mathbb T} (f,g) = \exp \left( \frac{1}{2 \pi i} \int_{x_0}^{x_0}  \log f \,  \frac{dg}{g} \right) g(x_0)^{-\nu(f)}  \, \mbox{,}
\end{equation}
where $x_0$ is any point on $S^1$, $\log f $ is any branch of the logarithm on $S^1 \setminus x_0$, and the integral is on $S^1$ from $x_0$ to $x_0$ in the counterclockwise direction.
The complex number ${\mathbb T}(f,g)$
does not depend on the choice of $x_0$ and the branch $\log f$ of the logarithm of $f$.

The pairing
$${\mathbb T}  \;  : \; C^{\infty}(S^1, \C^*) \times C^{\infty}(S^1, \C^*) \lrto \C^*$$
is bimultiplicative and antisymmetric. Moreover, from formula~\eqref{expli} it follows that the pairing $\mathbb T$ is invariant under the diagonal action of the group ${\mathop{\rm Diff}}^+(S^1)$ of orientation preserving  smooth diffeomorphisms  of $S^1$ on  $C^{\infty}(S^1, \C^*) \times C^{\infty}(S^1, \C^*)$.

\begin{nt}  \em
For any $f$ from $C^{\infty}(S^1, \C^*)$ such that $1 -f$ is from $C^{\infty}(S^1, \C^*)$ the Steinberg relation is satisfied:
$$
{\mathbb T}(f,1-f) =1  \, \mbox{.}
$$
Indeed, one of the definitions of the pairing $\mathbb T$ is that the complex number  ${\mathbb T}(f,g)$ is the image of the element $1 \in \dz = \pi_1(S_1)$ (considered in the counterclockwise direction) under the  monodromy of the connection on the line bundle on $S^1$ constructed by ${f, g \in C^{\infty}(S^1, \C^*)}$, see  more in Section~\ref{Del-coh}  below. Then such a line bundle with  a connection constructed by $f$ and $1-f$, where $f, 1-f \in C^{\infty}(S^1, \C^*)$, will have a non-vanishing section which is horizontal with respect to the connection, see~\cite[Exemp.~3.5]{D2}
and~\cite[Prop.~(1.13), Corol.~(1.15)]{Bl} (in the last reference $f$ and $g$ are holomorphic functions on a Riemann surface, but the reasoning  we need work in our case too.)

Therefore the pairing $\mathbb T$ defines the homomorphism from the group $K_2^M(C^{\infty}(S^1, \C^*))$ to the group  $\C^*$, where the Milnor $K_2$-group $K_2^M(A)$ of any commutative ring $A$ is defined as
$$
K_2^M(A) = A^* \otimes_{\dz} A^* / St  \, \mbox{,}
$$
where $A^*$ is the group of invertible elements of the ring $A$, and  the subgroup of Steinberg relations $St \subset A^* \otimes_{\dz} A^*$ is generated by all elements $a \otimes (1-a)$ with $a$ and $1-a$ from~$A^*$.
\end{nt}

\begin{nt}  \em
Restrict the pairing $\mathbb T$ to $\h^*  \times \h^*$. Using decomposition~\eqref{dec1}, we see that this restriction is uniquely defined by the  bimultiplicative, antisymmetric and the following two properties
\begin{equation}  \label{form-CC}
{\mathbb T}(z,z)= -1 \, \mbox{,}  \qquad  {\mathbb T}(f,g)= \exp \frac{1}{2 \pi i} \oint_{S^1} \log f \, \frac{dg}{g} \, \mbox{,} \quad \mbox{where} \quad f \in \h_0^* \, \mbox{,} \quad g \in \h^* \, \mbox{.}
\end{equation}

Clearly, ${\mathbb T} \, : \, \h^* \times \h^*  \to \C^*$ is a smooth map, where the topology on $\h^* \times \h^*$ is the product topology.
\end{nt}

\begin{nt} \em
There is the formal (in algebraic terms) analog of the pairing $\mathbb T$, called the Contou-Carr\`{e}re symbol, see~\cite[\S~2.9]{D2},   \cite{CC1}, \cite[\S~2]{OZ}. There are also the higher-dimensional generalizations of
the Contou-Carr\`{e}re symbol, see~\cite{OZ}, \cite{GO}.
\end{nt}

\subsection{$\cup$-products and $2$-cocycles}   \label{cup-sect}
We construct explicit smooth  group $2$-cocycles on the group $\g$ with coefficients in the group $\C^*$.

Let $\Gamma$ be a group, and  $N$  be a $\Gamma$-module. We will consider the multiplicative notation for the group law in $N$.

Recall (see, e.g., \cite[ch.~IV-V]{Bro}) that
a function $\lambda$ from $\Gamma$ to $N$ is called
 a $1$-cocycle~if
 $$
 \lambda (b_1 b_2) = \lambda(b_1) \, b_1 (\lambda(b_2))  \, \mbox{,} \quad \mbox{where} \quad b_1, b_2 \in \Gamma  \, \mbox{.}
 $$

For any two $1$-cocycles $\lambda_1$ and $\lambda_2$ from $\Gamma$ to $N$, a $2$-cocycle $\lambda_1 \cup \lambda_2$ on $\Gamma$ with coefficients in $N \otimes_{\dz} N$ is defined,
$$
\lambda_1  \cup \lambda_2   \, : \, \Gamma \times \Gamma \lrto N \otimes_{\dz} N  \, \mbox{,}
$$
where  $\Gamma$  acts diagonally on $N \otimes_{\dz} N$.
For any elements $b_1 $ and $b_2$ from $\Gamma$, the value of the $2$-cocycle $\lambda_1  \cup \lambda_2$ on the pair $(b_1, b_2)$ is given by the rule
$$
(\lambda_1  \cup \lambda_2)(b_1, b_2)= \lambda_1(b_1)  \otimes b_1 (\lambda_2(b_2))   \, \mbox{.}
$$

The $\cup$-product induces the well-defined homomorphism of cohomology groups (see also more on group cohomology in Section~\ref{gerbes} below):
$$
\cup \; : \; H^1(\Gamma, N) \otimes_{\dz} H^1(\Gamma, N)  \lrto H^2(\Gamma, N \otimes_{\dz} N)  \, \mbox{.}
$$

\bigskip

Recall that the group $\h^*$ is a $\DS$-module (see Section~\ref{App}). Hence, using the natural homomorphism $\g \to \DS$, we have that the group $\h^*$ is also a $\g$-module.

The map $\mathbb T$ is bimultiplicative and
invariant under the diagonal action of the group ${\mathop{\rm Diff}}^+(S^1)$. Therefore
this map  induces the homomorphism of $\g$-modules $\h^* \otimes_{\dz}  \h^*  \to \C^*$, where $\g$ acts diagonally on  $\h^* \otimes_{\dz}  \h^*$, and $\C^*$ is the trivial $\g$-module.
We denote this homomorphism by the same letter $\mathbb T$. Hence
 the following definition is correct.
\begin{defin}  \label{bracket}
For any two $1$-cocycles $\lambda_1$ and $\lambda_2$ on the group $\g$ with coefficients in the $\g$-module $\h^*$ define the $2$-cocycle
$$
\langle \lambda_1, \lambda_2   \rangle  = {\mathbb T} \circ (\lambda_1 \cup \lambda_2)
$$
on the group $\g$ with coefficients in the trivial $\g$-module $\C^*$.
Here $\circ$ means the composition of maps  $\lambda_1 \cup \lambda_2 \, : \, \g \times \g \to \h^* \otimes_{\dz}  \h^* $ and $ {\mathbb T} \, :  \, \h^* \otimes_{\dz}  \h^*  \to \C^* $.
\end{defin}

Clearly, if in Definition~\ref{bracket} the $1$-cocycles $\lambda_1$ and $\lambda_2$ are smooth, then the $2$-cocycle $\langle \lambda_1, \lambda_2 \rangle$ is smooth as the map from $\g \times \g$ to $\C^*$.

From the chain rule for the derivative of composition of two functions it is easy to see that the map
$$
\DS \lrto \h^* \; : \; f \longmapsto (f^{\circ -1})' = \frac{1}{f' \circ f^{\circ -1}}
$$
is a smooth $1$-cocycle on the group $\DS$ with coefficients in the $\DS$-module $\h^*$ (where, we recall, ${}'$ means $\frac{d}{dz}$).

Therefore the map
$$
\Omega \; : \; \g \lrto \h^* \, \mbox{,}  \qquad \Omega((d,f)) = (f^{\circ -1})' \, \mbox{,}  \quad \mbox{where} \quad d \in \h^*  \, \mbox{,} \quad f \in \DS
$$
is a {\em smooth  $1$-cocycle} on the group $\G$ with coefficients in the $\G$-module $\h^*$.

Besides, since $\g = \h^* \rtimes \DS$, the map
$$
\Lambda \; : \; \g \lrto \h^* \, \mbox{,}  \qquad \Lambda((d,f))= d \, \mbox{,}  \quad \mbox{where} \quad d \in \h^*  \, \mbox{,} \quad f \in \DS
$$
is also a {\em smooth  $1$-cocycle} on the group $\G$ with coefficients in the $\G$-module $\h^*$.

\medskip
Thus, by definition~\ref{bracket} we have the following {\em smooth $2$-cocycles} on the group $\g$ with coefficients in the trivial $\g$-module $\C^*$:
$$
\langle \Lambda, \Lambda  \rangle  \, \mbox{,} \qquad
\langle   \Lambda , \Omega                 \rangle  \, \mbox{,} \qquad
\langle   \Omega, \Omega                                \rangle     \, \mbox{.}
$$

\begin{nt} \label{Rem-coh} \em
Consider the Abelian group $H_{\rm sm}^2(\g, \C^*)$ that is the quotient group of the group of smooth $2$-cocycles by the subgroup of smooth $2$-coboundaries (we recall that $\C^*$ is the trivial $\g$-module). Then the elements  of the group $H_{\rm sm}^2(\g, \C^*)$
are in one-to-one correspondence with equivalence classes of central extensions of the Lie group $\g$  by the Lie group $\C^*$:
 \begin{equation}   \label{ext-centr}
 1 \lrto \C^*  \lrto \widehat{\g} \lrto \g \lrto 1
 \end{equation}
such that the topological space of $\widehat{\g}$ (without the group structure) is isomorphic to the topological space $\C^{*} \times \g$, and the group structure on the set $\C^{*} \times \g$ is defined in this case as
$$
(c_1, g_1)(c_2, g_2)= (E(g_1, g_2) \, c_1 c_2 , \, g_1 g_2) \, \mbox{,}
$$
where $c_i \in {\mathbb C}^*$, $g_i \in \g$ and $E$ is the corresponding $2$-cocycle.
\end{nt}

\section{Decomposition of determinant central extension}
\label{dec-th}
\subsection{Lie algebra central extensions}
To a central extension of Lie groups~\eqref{ext-centr} one associates the central extension of corresponding tangent Lie algebras. A corresponding $2$-cocycle $E$ on $\G$ with coefficients in $\C^*$
defines the  $2$-cocycle $\Lie E$  on the Lie algebra $\Lie \g $ with the coefficients in $\C$ by a formula (see, e.g., \cite[Ch.~I, Prop.~3.14]{KW}):
\begin{equation}  \label{Lie-coc}
\Lie E (X, Z) = \left. \frac{d^2}{dt \, ds} \right |_{ t=0, s=0} \left( E(g_t, h_s) \, - E(h_s, g_t) \right)  \, \mbox{,}
\end{equation}
where  $X$ and  $Z$ are from   $\Lie \g = \Lie \g^0$, and  $g_t$ and $h_s$ are smooth curves in $\g^0$ such that $\left. \frac{d}{dt} \right |_{t=0} g_t = X$ and $ \left. \frac{d}{ds} \right |_{s=0} h_s = Z$.

This description leads to the calculations of the Lie algebra $2$-cocycles.

For any $X \in \Lie_{\C} \g^0$
consider the block matrix
$$
\begin{pmatrix}
  X_{--} & X_{+-} \\
  X_{-+} & X_{++}
\end{pmatrix}
$$
with respect to the action of $X$ on $\h$ (see formula~\eqref{fo2}) and the  decomposition ${\h = \h_- \oplus \h_+}$ as in~\eqref{block-mat}.

By the same reasoning as in the proof of Theorem~\ref{th-plmi}, the linear operators $X_{+-}$ and  $X_{-+}$  belong to the spaces  $\Homb  ({\mathcal \h_+}, {\mathcal \h_-}) $ and ${\Homb}  ({\mathcal \h_-}, {\mathcal \h_+})$ correspondingly.

The proof of the following proposition is standard (see~\cite[Prop.~8]{O2} in the formal case and \cite[Prop.~6.6.5]{PS} in the case of Hilbert spaces).
\begin{prop}
Recall that $D$ is a $2$-cocycle on $\g^0$ with values in $\C^*$, see formula~\eqref{2-coc}. For any $X$ and  $Y$  from   $ \Lie \g^0$ we have
\begin{equation}  \label{LieD}
\Lie D (X,Z) = \tr \left( Z_{-+} X_{+-} - X_{-+} Z_{+-} \right)  \, \mbox{.}
\end{equation}
\end{prop}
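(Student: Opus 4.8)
The plan is to evaluate the right-hand side of formula~\eqref{Lie-coc} directly, starting from the explicit form of the cocycle $D$ recorded in formula~\eqref{form-2-coc}:
$$
D(g_1,g_2) = \det\left( 1 + (g_1)_{-+}\, (g_2)_{+-}\, \left( (g_2)_{++}\right)^{-1} \left( (g_1)_{++}\right)^{-1} \right)^{-1} \, \mbox{.}
$$
Fix $X, Z \in \Lie \g^0$ and smooth curves $g_t$, $h_s$ in $\g^0$ through the identity with $\left.\frac{d}{dt}\right|_0 g_t = X$ and $\left.\frac{d}{ds}\right|_0 h_s = Z$. Since formula~\eqref{Lie-coc} asks for a mixed second derivative at the origin, all I need is the coefficient of $ts$ in the Taylor expansion of $D(g_t,h_s) - D(h_s,g_t)$ at $(t,s) = (0,0)$; I will extract it factor by factor.

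First I would introduce
$$
M(t,s) = (g_t)_{-+}\, (h_s)_{+-}\, \left( (h_s)_{++}\right)^{-1} \left( (g_t)_{++}\right)^{-1} \, \in \, \Homb(\h_+,\h_+) \, \mbox{,}
$$
which is well defined and smooth in $(t,s)$ by Theorem~\ref{th-plmi} together with the smoothness of the block-component maps $g\mapsto g_{+-}$, $g\mapsto g_{-+}$, $g\mapsto g_{++}$ and of operator inversion, all established inside the proof of Theorem~\ref{th-3}. The crucial point is that $g_0 = h_0 = 1$ acts on $\h$ as the identity operator, which is diagonal for $\h = \h_-\oplus\h_+$; hence $(g_0)_{-+} = 0$ and $(h_0)_{+-} = 0$, so that $(g_t)_{-+} = t\, X_{-+} + O(t^2)$ and $(h_s)_{+-} = s\, Z_{+-} + O(s^2)$, where $X_{-+}$ and $Z_{+-}$ are the corresponding blocks of the Lie-algebra action on $\h$ of formula~\eqref{fo2} and Proposition~\ref{prop-Lie}, while $\left((h_s)_{++}\right)^{-1} = 1 + O(s)$ and $\left((g_t)_{++}\right)^{-1} = 1 + O(t)$. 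Multiplying these out gives $M(t,s) = ts\, X_{-+} Z_{+-} + R(t,s)$, where each monomial of $R(t,s)$ is divisible by $t^2$, by $s^2$, or has total degree at least $3$; so the $ts$-coefficient of $M(t,s)$ is precisely $X_{-+} Z_{+-}$.

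Next I would substitute into $\det(1 + \,\cdot\,)^{-1}$. By formula~\eqref{deter} one has $\det(1+A) = \sum_{l\ge 0}\tr(\Lambda^l A)$ with $\tr(\Lambda^l A)$ homogeneous of degree $l$ in $A$, hence $\det(1+A)^{-1} = 1 - \tr(A) + Q(A)$ with $Q$ a sum of terms of degree at least $2$ in $A$. With $A = M(t,s) = O(ts)$, the term $Q(M(t,s))$ is $O(t^2 s^2)$ and contributes nothing to the $ts$-coefficient, while the $ts$-coefficient of $-\tr(M(t,s))$ equals $-\tr(X_{-+} Z_{+-})$. Therefore $\left.\frac{d^2}{dt\,ds}\right|_0 D(g_t,h_s) = -\tr(X_{-+} Z_{+-})$, and interchanging the roles of $(X,t)$ and $(Z,s)$ yields $\left.\frac{d^2}{dt\,ds}\right|_0 D(h_s,g_t) = -\tr(Z_{-+} X_{+-})$. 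Subtracting gives $\Lie D(X,Z) = \tr(Z_{-+} X_{+-}) - \tr(X_{-+} Z_{+-}) = \tr\left( Z_{-+} X_{+-} - X_{-+} Z_{+-}\right)$, as asserted.

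The algebra here is routine; the one point that needs care, as throughout this paper, is that these Taylor expansions really are legitimate in the locally convex (Silva) setting. Concretely I would invoke: the smoothness of $D$ as a $\C^*$-valued map (Theorem~\ref{th-3}); that $g\mapsto g_{+-}$ and $g\mapsto g_{-+}$ are smooth into $\Homb(\h_+,\h_-)$ and $\Homb(\h_-,\h_+)$ with their inductive-limit topologies, with derivatives at the identity equal to the operators $X_{+-}$, $X_{-+}$; and that composition, inversion, trace and determinant are smooth on the relevant $\Homb$-spaces (Section~\ref{determin}). Granting these, the bookkeeping of low-order terms above is the only content, exactly as in the classical computation of \cite[Prop.~6.6.5]{PS} and the formal one of \cite[Prop.~8]{O2}.
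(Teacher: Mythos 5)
Your computation is correct and is exactly the standard argument the paper invokes by citing \cite[Prop.~8]{O2} and \cite[Prop.~6.6.5]{PS}: expand $D(g_t,h_s)=\det\bigl(1+(g_t)_{-+}(h_s)_{+-}((h_s)_{++})^{-1}((g_t)_{++})^{-1}\bigr)^{-1}$ to second order using that the off-diagonal blocks vanish at the identity, and read off the $ts$-coefficient of the antisymmetrized expression. So this is essentially the same approach as the paper, with the smoothness ingredients (Theorems~\ref{th-3}, \ref{th-plmi}, Section~\ref{determin}) correctly identified to justify the Taylor expansions in the Silva-space setting.
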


From formula~\eqref{LieD} we see that the $2$-cocycle $\Lie D$ can be extended by $\C$-linearity to the $2$-cocycle on the Lie algebra $\Lie_{\C} \g^0$. We denote this extension by the same notation $\Lie D$. It is given by the same formula~\eqref{LieD}.

\begin{prop}
The $2$-cocycles $\Lie \langle \Lambda, \Lambda  \rangle $, $\Lie \langle   \Lambda  , \Omega                 \rangle$ and $\Lie \langle   \Omega, \Omega                                \rangle  $
can be extended by $\C$-linearity to the $2$-cocycles on the Lie algebra $\Lie_{\C} \g^0$ with values in $\C$. These extended $2$-cocycles
 have the following explicit formulas (in the notation of Proposition~\ref{Lie_alg}) for $s_i, r_i \in \h$:
\begin{gather}
\label{first-form}
\Lie \langle \Lambda, \Lambda  \rangle \left(s_1 - r_1 \frac{\partial}{\partial t}, s_2 - r_2 \frac{\partial}{\partial t} \right) \, = \,  \frac{1}{ \pi i} \oint_{S^1} s_1 d s_2 \, \mbox{,}\\
\label{sec-form}
\Lie \langle \Lambda, \Omega  \rangle \left(s_1 - r_1 \frac{\partial}{\partial t}, s_2 - r_2 \frac{\partial}{\partial t} \right) \, = \, \frac{1}{2 \pi i} \oint_{S^1}
s_1 d r_2' - s_2 dr_1'  \, \mbox{,} \\
\label{third-form}
 \Lie \langle \Omega, \Omega  \rangle \left(s_1 - r_1 \frac{\partial}{\partial t}, s_2 - r_2 \frac{\partial}{\partial t} \right) \, = \,   \frac{1}{ \pi i} \oint_{S^1}  r_1' d r_2'  \, \mbox{.}
\end{gather}
\end{prop}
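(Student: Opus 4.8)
The plan is to reduce the statement to the general description of the Lie algebra $2$-cocycle attached to a $\cup$-product of two group $1$-cocycles, and then to substitute the explicit differentials of $\Lambda$ and $\Omega$.

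First I would compute the differentials at the identity of the smooth $1$-cocycles $\Lambda$ and $\Omega$. Let $g_t$ be a smooth curve in $\g^0$ with $g_0=e$ and $\frac{d}{dt}\bigl|_0 g_t = X$, and write $g_t=(d_t,f_t)$. Differentiating the action $g_t\diamond h = d_t\cdot(h\circ f_t^{\circ -1})$ at $t=0$ gives $\dot d_0\, h + \bigl(\frac{d}{dt}\bigl|_0 f_t^{\circ -1}\bigr) h'$, and comparing with formula~\eqref{fo2} we see that, writing $X = s - r\frac{\partial}{\partial t}$ in the notation of Proposition~\ref{Lie_alg}, one has $\frac{d}{dt}\bigl|_0 d_t = s$ and $\frac{d}{dt}\bigl|_0 f_t^{\circ -1} = r$. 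Hence $\frac{d}{dt}\bigl|_0\Lambda(g_t)=s$ and $\frac{d}{dt}\bigl|_0\Omega(g_t)=\frac{d}{dt}\bigl|_0 (f_t^{\circ -1})' = r'$, i.e. the differentials of $\Lambda$ and $\Omega$ send $s-r\frac{\partial}{\partial t}$ to $s$ and to $r'$ respectively, both viewed as elements of $\h$, the tangent space to $\h^*$ at $1$.

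Next I would establish the general formula: for any two smooth $1$-cocycles $\lambda_1,\lambda_2$ on $\g$ with values in $\h^*$ and any $X,Z\in\Lie\g^0$,
$$\Lie\langle\lambda_1,\lambda_2\rangle(X,Z)=\frac{1}{2\pi i}\oint_{S^1} d\lambda_1(X)\, d\bigl(d\lambda_2(Z)\bigr)-\frac{1}{2\pi i}\oint_{S^1} d\lambda_1(Z)\, d\bigl(d\lambda_2(X)\bigr).$$
To prove this I would apply formula~\eqref{Lie-coc} with curves $g_t,h_s$ through $e$, so that $\langle\lambda_1,\lambda_2\rangle(g_t,h_s)={\mathbb T}\bigl(\lambda_1(g_t),\,g_t(\lambda_2(h_s))\bigr)$. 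The first argument always has winding number zero — for $\Lambda$ because $\Lambda(g_t)\in\h_0^*$, and for $\Omega$ by Proposition~\ref{wind-diff} — so the elementary expression~\eqref{form-CC} applies with no monodromy correction and $\log{\mathbb T}(f,g)=\frac{1}{2\pi i}\oint_{S^1}\log f\, d\log g$ is bilinear in $(\log f,\log g)$. Writing $\log\lambda_1(g_t)=t\,d\lambda_1(X)+O(t^2)$ and $\log\bigl(g_t(\lambda_2(h_s))\bigr)=s\,\bigl(d\lambda_2(Z)\circ\phi_t\bigr)+O(s^2)$ with $\phi_t=f_t^{\circ -1}$, bilinearity gives $\log{\mathbb T}(\lambda_1(g_t),g_t(\lambda_2(h_s)))=\frac{ts}{2\pi i}\oint_{S^1} d\lambda_1(X)\, d\bigl(d\lambda_2(Z)\circ\phi_t\bigr)+(\text{terms divisible by }t^2\text{ or }s^2)$; since $\phi_0=\mathrm{id}$, the $g_t$-action contributes only at order $t^2s$ and is killed by the mixed second derivative, and passing through $\exp$ yields $\frac{\partial^2}{\partial t\,\partial s}\bigl|_0{\mathbb T}(\lambda_1(g_t),g_t(\lambda_2(h_s)))=\frac{1}{2\pi i}\oint_{S^1} d\lambda_1(X)\, d\bigl(d\lambda_2(Z)\bigr)$. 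Subtracting the term with $g_t$ and $h_s$ interchanged gives the displayed identity. Its right-hand side is manifestly $\C$-bilinear in the coordinates $(s_j,r_j)$ of $X,Z$, so $\Lie\langle\lambda_1,\lambda_2\rangle$ extends $\C$-linearly to $\Lie_{\C}\g^0$ and is given there by the same formula. Finally I would specialize using Step~1 and the identity $\oint_{S^1} a\, db = -\oint_{S^1} b\, da$: with $\lambda_1=\lambda_2=\Lambda$ one gets $\frac{1}{2\pi i}\bigl(\oint s_1\,ds_2-\oint s_2\,ds_1\bigr)=\frac{1}{\pi i}\oint_{S^1} s_1\,ds_2$; with $\lambda_1=\lambda_2=\Omega$ one gets $\frac{1}{\pi i}\oint_{S^1} r_1'\,dr_2'$; and with $\lambda_1=\Lambda$, $\lambda_2=\Omega$ one gets $\frac{1}{2\pi i}\oint_{S^1}(s_1\,dr_2'-s_2\,dr_1')$, which are exactly~\eqref{first-form}, \eqref{third-form} and~\eqref{sec-form}.

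The main obstacle is the middle step: carrying out the double-derivative computation of $\frac{\partial^2}{\partial t\,\partial s}\bigl|_0{\mathbb T}(\lambda_1(g_t),g_t(\lambda_2(h_s)))$ rigorously — in particular justifying that the winding-number term of ${\mathbb T}$ is trivial here so that the simple bilinear formula~\eqref{form-CC} is legitimate, that the $\g$-action by $g_t$ enters only at an order annihilated by the mixed partial, and that differentiation under the contour integral sign is valid on the Silva-space-modelled Lie group $\g^0$. Everything after that is substitution and the antisymmetry of $\oint_{S^1} a\,db$.
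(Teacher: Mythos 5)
Your proposal is correct and follows essentially the same route as the paper: the paper also computes $\Lie\langle\lambda_1,\lambda_2\rangle$ by applying formula~\eqref{Lie-coc} to the $2$-cocycle ${\mathbb T}\circ(\lambda_1\cup\lambda_2)$ with the winding-number-zero expression~\eqref{form-CC}, merely citing the analogous formal-case computation in~\cite[Prop.~7]{O2} and~\cite[Prop.~4.1]{O1} instead of writing it out. Your explicit derivation of the bilinear formula $\frac{1}{2\pi i}\oint d\lambda_1(X)\,d\bigl(d\lambda_2(Z)\bigr)-\frac{1}{2\pi i}\oint d\lambda_1(Z)\,d\bigl(d\lambda_2(X)\bigr)$ and the substitution of $d\Lambda$, $d\Omega$ is exactly what that citation encapsulates.
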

\begin{proof}
By the results of Section~\ref{Expl_descr}, we have an explicit description of  the Lie group
${ \mathcal G}^0$ and its the tangent space at the identity element.
 Besides, we have   also $ \frac{d}{dx} = 2 \pi i z  \frac{d}{dz}$ when $z = \exp(2 \pi i x)$, and for $\varphi \in \h_{\dr}$ (see Section~\ref{anal_diff}), $t \in \dr$ we have a smooth curve in the Lie group $\DS$:
$$
\exp(2 \pi i (x +  \varphi t))= z \exp(2 \pi i t \varphi)= z(1 + 2 \pi i \varphi t - 2 \pi^2 \varphi^2 t^2 + \ldots)= z + 2 \pi i z \varphi t + \ldots   \, \mbox{.}
$$

Now, with the help of formulas~\eqref{Lie-coc}  and~\eqref{form-CC}, the proof of this proposition is the same as in the formal case in~\cite[Prop. 7]{O2} and~\cite[Prop.~4.1]{O1}.

We only note  that the difference with the mentioned formal case is
that in the left hand sides of formulas~\eqref{first-form}-\eqref{third-form} we have now the minus signs (see Remark~\ref{rem-sign}), and in the right hand sides of these formulas we have $\frac{1}{2 \pi i} \oint_{S^1}$ instead of the residue in the formal case.

\end{proof}

\begin{Th} \label{main-Lie}
We have an equality between  $2$-cocycles on the Lie algebra $\Lie_{\C} \G^0$ with coefficients in $\C$
\begin{equation}  \label{Lie-RR}
12 \Lie D = 6 \Lie \langle \Lambda, \Lambda \rangle  - 6 \Lie \langle \Lambda, \Omega \rangle  + \Lie \langle \Omega, \Omega \rangle   \, \mbox{.}
\end{equation}
\end{Th}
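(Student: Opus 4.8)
The plan is to reduce~\eqref{Lie-RR} to an explicit computation of $\Lie D$ in Fourier modes and to compare it, sector by sector, with the right-hand side. Both sides of~\eqref{Lie-RR} are $\C$-bilinear antisymmetric forms on $\Lie_{\C}\g^0 = \h \rtimes \mathop{\rm Vect}\nolimits_{\rm hol}(S^1)_{\C}$, so by bilinearity it is enough to evaluate them on pairs built from the two natural types of generators: the ``function'' elements $s \in \h$, which act on $\h$ by $h \mapsto sh$, and the ``vector-field'' elements $-r\frac{d}{dz}$, which act by $h\mapsto rh'$ (see~\eqref{fo2}). Accordingly the verification splits into the three sectors (function, function), (function, vector field) and (vector field, vector field); in the mixed sector the antisymmetry of all the cocycles involved lets me treat the two orderings at once. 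Concretely, I will show that in these three sectors $12\Lie D$ equals, respectively, $6\Lie\langle\Lambda,\Lambda\rangle = \frac{6}{\pi i}\oint_{S^1} s_1\,ds_2$, then $-6\Lie\langle\Lambda,\Omega\rangle = -\frac{3}{\pi i}\oint_{S^1}(s_1\,dr_2' - s_2\,dr_1')$, and finally $\Lie\langle\Omega,\Omega\rangle = \frac{1}{\pi i}\oint_{S^1} r_1'\,dr_2'$.

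First I would write down, for each generator, its block decomposition with respect to $\h = \h_- \oplus \h_+$ in terms of the monomials $z^n$ (which, as in~\eqref{family}, form topological bases of $\h_+$ and, for $n < 0$, of $\h_-$): the operator $s = \sum_n s_n z^n$ sends $z^k \mapsto \sum_n s_n z^{n+k}$, while $-r\frac{d}{dz}$ with $r = \sum_m r_m z^m$ sends $z^k \mapsto k\sum_m r_m z^{m+k-1}$; the off-diagonal blocks $(\,\cdot\,)_{+-}$ and $(\,\cdot\,)_{-+}$ are obtained by retaining the strictly negative, resp.\ the nonnegative, powers of $z$. For a composition of two such off-diagonal blocks I would then apply the trace formula of Section~\ref{trace} --- equivalently, sum the diagonal matrix entries over the basis $z^j$, $j\ge 0$ --- which reduces each $\tr(Z_{-+}X_{+-})$ to a finite sum $\sum_{k=0}^{N}(\cdots)$ in the mode coefficients. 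Evaluating the arithmetic, quadratic and cubic progressions that occur yields closed Fourier-mode formulas for $\Lie D$ on each type of pair; for instance, in the last sector the factor $k$ coming from each vector field turns the trace into $\sum_{k=0}^{N} k(m+k-1) = \tfrac16\,m(m-1)(m-2)$ (with $N = -m$), which is the origin of the normalization ``$12 = 6\cdot 2$''.

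Finally I would match these mode formulas with the mode expansions of the right-hand side obtained from $\oint_{S^1} z^m\,dz = 2\pi i\,\delta_{m,-1}$; one gets $\Lie D = \tfrac12\Lie\langle\Lambda,\Lambda\rangle$, $\Lie D = -\tfrac12\Lie\langle\Lambda,\Omega\rangle$ and $\Lie D = \tfrac1{12}\Lie\langle\Omega,\Omega\rangle$ on the three sectors, and these assemble to~\eqref{Lie-RR}. This computation is formally the same as the one carried out in the formal (algebraic) setting in~\cite[Prop.~7, Prop.~8]{O2} and in the Hilbert-space setting in~\cite[Prop.~6.6.5]{PS}, the only bookkeeping differences being that the residue is everywhere replaced by $\frac{1}{2\pi i}\oint_{S^1}$ and that the overall signs change because of the convention for the action of diffeomorphisms adopted here (see Remark~\ref{rem-sign}). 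The only genuinely delicate point --- routine rather than conceptual --- is the vector-field sector: one must keep careful track of the ranges of summation so that the separate partial sums (over $m \le -1$ and over $m \ge 3$) glue, using the vanishing of $m(m-1)(m-2)$ at $m = 0, 1, 2$, into a single sum over all modes, and similarly in the mixed sector with $m(m-1)$ at $m = 0, 1$.
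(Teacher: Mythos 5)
Your proposal follows essentially the same route as the paper: reduce to an explicit Fourier-mode verification of \eqref{Lie-RR} using the formulas \eqref{LieD}--\eqref{third-form}, with the same sector-by-sector constants ($\Lie D=\tfrac12\Lie\langle\Lambda,\Lambda\rangle$, $-\tfrac12\Lie\langle\Lambda,\Omega\rangle$, $\tfrac1{12}\Lie\langle\Omega,\Omega\rangle$), and the sample arithmetic you quote (e.g. $\sum_{k=0}^{-m}k(m+k-1)=\tfrac16 m(m-1)(m-2)$) is correct and matches the formal computation in~\cite{O2} that the paper invokes.

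The one point you must still supply is the justification for passing to individual modes. Bilinearity only reduces the identity to the three sectors with \emph{general} $s_i,r_i\in\h$; the monomials $z^n$ span only a dense subspace of $\h$, so "evaluating on generators" is not a purely algebraic reduction. Correspondingly, your claim that each $\tr(Z_{-+}X_{+-})$ becomes a \emph{finite} sum is true only for monomial arguments; for general analytic $s_i,r_i$ the diagonal sum of Section~\ref{trace} is an infinite (absolutely convergent) series whose term-by-term evaluation against the infinite Fourier expansions requires an interchange of summations. The paper closes exactly this gap in the first paragraph of its proof: all four $2$-cocycles are continuous in each argument (for $\Lie D$ because the pairings $\tr(Z_{-+}X_{+-})$ and $\tr(X_{-+}Z_{+-})$ are continuous), and the linear span of the elements $z^l$ and $-z^{j+1}\frac{d}{dz}$ is dense in $\Lie_{\C}\g^0$, so it suffices to check \eqref{Lie-RR} on monomials. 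Add either this continuity-plus-density argument or an explicit absolute-convergence argument legitimating the mode-by-mode computation; with that inserted, your proof coincides with the paper's.
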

\begin{proof}
The $2$-cocycles given by formulas~\eqref{first-form}-\eqref{third-form} are continuous in each argument, where we consider  the product topology on $\Lie_{\mathbb C} \g^0$, whose topological space is $\h \times \h$.
Besides, the $2$-cocycle $\Lie D$ is continuous in each argument, since in formula~\eqref{LieD} each of the  pairings  $\tr \left( Z_{-+} X_{+-}  \right)$  and $\tr \left( X_{-+} Z_{+-}  \right)$  is continuous in each argument.

Since the linear span of the set of all elements $z^l$ and $- z^{j+1} \frac{d}{dz}$ is dense in $\Lie_{\C} \g^0 $, it is enough to check~\eqref{Lie-RR} on the elements from this set. Using that the  cocycles on Lie algebras
 are antisymmetric, using explicit formulas \eqref{LieD}-\eqref{third-form}, this check is the same as in the formal case in the proof of Theorem~5 from~\cite{O2}.

\end{proof}

\begin{nt}  \label{Lie-dim} \em
 By the similar reasoning as in the proof of Proposition~(2.1) from~\cite{ADKP} we can see that the continuous Lie algebra cohomology $H^2_{\rm c}(\Lie_{\C} \g^0, \C)$ is a three-dimensional vector space over $\C$ with the basis given by the {$2$-cocycles} from
formulas~\eqref{first-form}-\eqref{third-form}. Therefore formula~\eqref{Lie-RR} gives the decomposition of the $2$-cocycle $\Lie D$ with respect to this basis in $H^2_{\rm c}(\Lie_{\C} \g^0, \C)$.
\end{nt}

\subsection{Equivalence of Lie group central extensions}
Recall that  the group ${\rm PSL}(2, \dr)$ is the group of holomorphic automorphisms  of  the unit disk with the  following action.

The group ${\rm PSL}(2, \dr)$ acts on the upper half plane in $\C$ by holomorphic automorphisms:
$$
z \longmapsto \frac{az + b }{cz +d }   \, \mbox{,} \qquad  \mbox{where}  \quad
\begin{pmatrix}
  a & b \\
  c & d
\end{pmatrix}  \in {\rm PSL}(2, \dr)  \, \mbox{.}
$$

The Caley map $z \mapsto \frac{z -i}{z +i}$ gives the holomorphic isomorphism between the upper half plane and the unit disk. Via this isomorphism the group ${\rm PSL}(2, \dr)$ goes to the group ${\rm PSU}(1,1)$ of matrices of form
$ \sigma^{-1} =
 \begin{pmatrix}
  \alpha & \beta \\
  \overline{\beta} & \overline{\alpha}
\end{pmatrix}
$ (factored modulo the subgroup $\{ \pm 1 \}$), where $\alpha, \beta \in \C$ and  $\alpha \overline{\alpha} - \beta \overline{\beta} =1 $. This group preserves the unit circle $S^1$ and acts on $\h$, preserving $\h_+$:
\begin{equation}  \label{act-SL}
\sigma(h)= h \left( \frac{ \alpha z + \beta}{\overline{\beta} z + \overline{\alpha} } \right) \, \mbox{,} \qquad \mbox{where} \quad h \in \h \, \mbox{.}
\end{equation}

\begin{lemma}  \label{sl}
The $\C$-linear span of
the image of the Lie algebra $sl(2,\dr)$ in $\mathop{\rm Vect}\nolimits_{\rm hol}(S^1)_{\C}$ under the map induced by the action of ${\rm PSL}(2, \dr)$ on $S^1$ is the Lie algebra $sl(2, \C)$  generated over $\C$ by $z^j \frac{d}{dz}$, where $0 \le j \le 2$,
with the notation from Proposition~\ref{Lie_alg}.
\end{lemma}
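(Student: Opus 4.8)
The plan is to make the map of the statement completely explicit by differentiating the ${\rm PSU}(1,1)$-action \eqref{act-SL} at the identity, and then to compute the resulting $\C$-linear span of holomorphic vector fields on $S^1$ by hand. Recall that the Caley map identifies ${\rm PSL}(2,\dr)$ with ${\rm PSU}(1,1)$, and accordingly identifies $sl(2,\dr)$ with the real Lie algebra $su(1,1)$ consisting of the matrices $X = \left(\begin{matrix} i\rho & \zeta \\ \overline{\zeta} & -i\rho \end{matrix}\right)$ with $\rho \in \dr$ and $\zeta \in \C$. The map in the lemma is then the Lie algebra homomorphism sending such an $X$ to the vector field on $S^1$ generating the flow of the one-parameter subgroup $t \mapsto \exp(tX)$ acting on $\h$ (hence on $S^1$) via \eqref{act-SL}.

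First I would parametrize this one-parameter subgroup in the form used in \eqref{act-SL}, say with $\alpha(t) = 1 + i\rho t + O(t^2)$ and $\beta(t) = \zeta t + O(t^2)$, so that the corresponding Möbius transformation is $\varphi_t(z) = \dfrac{\alpha(t) z + \beta(t)}{\overline{\beta(t)}\, z + \overline{\alpha(t)}}$. A direct computation gives
\[
\left.\frac{d}{dt}\right|_{t=0} \varphi_t(z) \; = \; 2 i \rho\, z + \zeta - \overline{\zeta}\, z^2 ,
\]
and since the action \eqref{act-SL} is $h \mapsto h \circ \varphi_t$, the derivation of $\h$ induced by $X$ is $h \mapsto \left( 2 i \rho\, z + \zeta - \overline{\zeta}\, z^2 \right) \dfrac{dh}{dz}$. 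In other words, the image of $X$ in $\mathop{\rm Vect}\nolimits_{\rm hol}(S^1) \subset \mathop{\rm Vect}\nolimits_{\rm hol}(S^1)_{\C}$ is the vector field $\left( 2 i \rho\, z + \zeta - \overline{\zeta}\, z^2 \right) \dfrac{d}{dz}$; one checks painlessly that this is indeed tangent to $S^1$. (The precise signs of $\rho$ and $\zeta$ here depend on the chosen conventions for the Caley transform and for $\sigma$ versus $\sigma^{-1}$ in \eqref{act-SL}, but they play no role in what follows.)

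It then remains to identify the $\C$-linear span of the polynomials $2 i \rho\, z + \zeta - \overline{\zeta}\, z^2$ as $\rho$ runs over $\dr$ and $\zeta$ over $\C$. Taking $\rho=1,\ \zeta=0$ produces a nonzero multiple of $z$; taking $\rho=0,\ \zeta=1$ gives $1-z^2$; taking $\rho=0,\ \zeta=i$ gives $i(1+z^2)$, so that $1+z^2$ lies in the span; adding and subtracting, both $1$ and $z^2$ lie in the span. Hence the $\C$-span of the image is exactly $\langle 1, z, z^2 \rangle \dfrac{d}{dz}$, which by the bracket rule $[z^j \frac{d}{dz}, z^k \frac{d}{dz}] = (k-j)\, z^{j+k-1}\frac{d}{dz}$ is the three-dimensional simple Lie algebra $sl(2,\C)$ (with, e.g., $\frac{d}{dz}$, $-2z\frac{d}{dz}$, $-z^2\frac{d}{dz}$ an $sl_2$-triple), as asserted in Proposition~\ref{Lie_alg}'s notation. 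I expect no genuine obstacle beyond the careful bookkeeping of the Caley map and of the sign conventions in \eqref{act-SL} when setting up the one-parameter subgroup; since the conclusion is a statement about a linear span, these conventions do not affect the final answer.
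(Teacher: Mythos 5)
Your proposal is correct and follows essentially the same route as the paper: both differentiate the ${\rm PSU}(1,1)$-action \eqref{act-SL} along one-parameter subgroups at $t=0$ to obtain the holomorphic vector fields (the paper does this for the basis $J_1,J_2,J_3$ of the Lie algebra of ${\rm PSU}(1,1)$, you for a general element parametrized by $\rho,\zeta$), and then observe that the $\C$-span of the resulting quadratic polynomials in $z$ times $\frac{d}{dz}$ is exactly $\langle 1,z,z^2\rangle\frac{d}{dz}$, i.e. $sl(2,\C)$. Your remark that the sign conventions (Caley map, $\sigma$ versus $\sigma^{-1}$) do not affect the span is correct and disposes of the only point where your computation could differ from the paper's by a sign.
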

\begin{proof}
The Lie algebra of the group ${\rm PSU}(1,1)$ is generated by the matrices:
$$
J_1 = \begin{pmatrix}
  i & 0 \\
  0 & -i
\end{pmatrix} \, \mbox{,} \qquad
J_2 = \begin{pmatrix}
  0 & 1 \\
  1 & 0
\end{pmatrix} \, \mbox{,} \qquad
J_3 = \begin{pmatrix}
  0 & i \\
  -i & 0
\end{pmatrix} \, \mbox{.}
$$
Now by direct calculations with formula~\eqref{act-SL} we have
\begin{gather*}
(\exp(tJ_1)(h))(z)= h(z) - 2izf'(z)t + \ldots \\
  (\exp(tJ_2)(h))(z)= h(z) + (z^2 -1) f'(z)t + \ldots  \\
  (\exp(tJ_3)(h))(z)= h(z) -i(z^2 +1) f'(z)t + \ldots \, \mbox{.}
\end{gather*}
\end{proof}

\begin{defin}
Define the Lie subgroup $$\g_+ = \h^*_+  \rtimes {\rm PSL}(2, \dr)$$ of the Lie group $\g^0$, where $\h^*_+ = \C^* \times \h^*_{+, 1}$ is the subgroup of $\h^*_0$ (see formula~\eqref{dec2}).
\end{defin}

Through the action of the Lie group $\g^0$ on $\h$ we have the action of the Lie group $\g_+$ on $\h$ that preserves the subspace $\h_+$.

\begin{defin}  \label{prin-bun}
For any Lie group $\mathcal K$  denote   by $\mathop{\rm Ext} ({\mathcal K}, \C^*)$ the Abelian group
of equivalence classes of central extensions
$$
1 \lrto \C^* \lrto \widetilde{ \mathcal K}  \lrto {\mathcal K}  \lrto 1
$$
of ${\mathcal K}$ by $\C^*$,  where (we always suppose that) the Lie group $\widetilde{ \mathcal K}$ is a principal $\C^*$-bundle, and the group structure in $\mathop{\rm Ext} ({\mathcal K}, \C^*)$ is given by the Baer sum of central extensions.
\end{defin}

Let $\Lie {\mathcal K}$ be the Lie algebra of a Lie group ${\mathcal K}$.
Clearly, we have the natural homomorphism from the group $\mathop{\rm Ext} ({\mathcal K}, \C^*)$ to the $\C$-vector space $H^2_{\rm c}(\Lie {\mathcal K}, \C)$  of the continuous Lie algebra cohomology,
where this vector space classifies the central extensions
$$
0 \lrto \C \lrto \widetilde{\Lie {\mathcal K}} \lrto \Lie {\mathcal K} \lrto 0
$$
of the topological Lie algebra $\Lie {\mathcal K}$  by the Abelian Lie algebra $\C$
 for which there exists  a continuous linear section from $\Lie {\mathcal K}$ to $\widetilde{\Lie {\mathcal K}}$.

\begin{Th}  \label{Th7}
Any element from the group $\mathop{\rm Ext} (\g^0, \C^*)$
is uniquely defined by a pair that consists of  the  image of the element in  $H^2_{\rm c}(\Lie {\g^0}, \C)$
and the restriction of the element  to $\mathop{\rm Ext} (\g_+, \C^*)$.
\end{Th}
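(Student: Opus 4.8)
The plan is to prove that the natural map
$$
\Phi\colon \mathop{\rm Ext}(\g^0,\C^*)\lrto H^2_{\rm c}(\Lie\g^0,\C)\times\mathop{\rm Ext}(\g_+,\C^*),
$$
which sends a central extension to its class in continuous Lie algebra cohomology together with its restriction to $\g_+$, is injective. So I take a central extension $E$ with vanishing Lie algebra class and with trivial restriction to $\g_+$, and show that $E$ itself is trivial.

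\textbf{Step 1: pass to the universal cover (the main obstacle).} Let $\overline{\g^0}$ be the universal covering group, so $\Lie\overline{\g^0}=\Lie\g^0$. By the decompositions of Section~\ref{funct} the universal cover of $\h^*_0$ is $\C\times\h^*_{-,1}\times\h^*_{+,1}$, a product of contractible spaces, and $\WDS$ is the contractible open subset ${\mathcal L}$ of a Silva space (Section~\ref{anal_diff}); hence $\overline{\g^0}=(\C\times\h^*_{-,1}\times\h^*_{+,1})\rtimes\WDS$ is contractible and in particular $\pi_2(\overline{\g^0})=0$. The pullback $\overline E$ of $E$ to $\overline{\g^0}$ again has vanishing Lie algebra class; I claim it is trivial. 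Equivalently, for the simply connected group $\overline{\g^0}$ the natural map $\mathop{\rm Ext}(\overline{\g^0},\C^*)\to H^2_{\rm c}(\Lie\g^0,\C)$ is injective: a splitting of the associated Lie algebra central extension should integrate, by the monodromy principle, to a group splitting of $\overline E$ over the simply connected $\overline{\g^0}$, the relevant period/flux obstructions vanishing since the Lie algebra cocycle is a coboundary and $\pi_2(\overline{\g^0})=0$. I expect this integration step in the locally convex (Silva) setting to be the principal technical difficulty; I would either invoke Neeb's integrability results for Lie groups modelled on locally convex spaces, or argue directly using that $\overline{\g^0}$ is a semidirect product built from vector spaces and an open subset of a Silva space. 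Granting it, $\overline E$ is trivial, and its splitting is unique because every smooth homomorphism $\overline{\g^0}\to\C^*$ is trivial by Theorem~\ref{smooth}.

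\textbf{Step 2: the flat classification.} By Step 1, $E$ is a flat extension: the standard covering-space construction identifies it with $(\overline{\g^0}\times\C^*)/\Gamma_\psi$, where $\Gamma_\psi=\{(\gamma,\psi(\gamma)):\gamma\in\pi_1(\g^0)\}$ for a homomorphism $\psi\colon\pi_1(\g^0)\to\C^*$; moreover $\psi$ is uniquely determined by $E$, i.e.\ the resulting map $\mathop{\rm Hom}(\pi_1(\g^0),\C^*)\to\mathop{\rm Ext}(\g^0,\C^*)$ is injective, again because $\mathop{\rm Hom}_{\rm sm}(\overline{\g^0},\C^*)=1$ by Theorem~\ref{smooth}. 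Hence $E$ is trivial if and only if $\psi=1$, and it remains to prove $\psi=1$ using $E|_{\g_+}=0$.

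\textbf{Step 3: $\pi_1(\g_+)\xrightarrow{\sim}\pi_1(\g^0)$ and conclusion.} The inclusion $\g_+=\h^*_+\rtimes{\rm PSL}(2,\dr)\hookrightarrow\g^0=\h^*_0\rtimes\DS$ is a morphism between the fibrations $\h^*_+\to\g_+\to{\rm PSL}(2,\dr)$ and $\h^*_0\to\g^0\to\DS$; comparing long exact homotopy sequences, $\h^*_+\hookrightarrow\h^*_0$ induces an isomorphism $\pi_1(\h^*_+)=\Z\xrightarrow{\sim}\Z=\pi_1(\h^*_0)$ (both equal to the $\C^*$-factor, the remaining factors being contractible), ${\rm PSL}(2,\dr)\hookrightarrow\DS$ induces an isomorphism on $\pi_1$ (both receive ${\mathrm U}(1)$ inducing $\pi_1$-isomorphisms, see Section~\ref{anal_diff}), and $\pi_2({\rm PSL}(2,\dr))=\pi_2(\DS)=0$; the five lemma then yields $i_*\colon\pi_1(\g_+)\xrightarrow{\sim}\pi_1(\g^0)$. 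Since $\overline E$ is trivial, so is its further restriction to $\overline{\g_+}$, hence $E|_{\g_+}$ is flat and, by naturality of the construction in Step 2, corresponds to the character $\psi\circ i_*$. The same flat classification applies to $\g_+$ once one knows $\mathop{\rm Hom}_{\rm sm}(\g_+,\C^*)=\mathop{\rm Hom}_{\rm sm}(\overline{\g_+},\C^*)=1$; this holds because $\Lie_{\C}\g_+=\h_+\rtimes sl(2,\C)$ is perfect — by the same computation as for $\Lie_{\C}\g^0$, using Lemma~\ref{sl} to reduce to $sl(2,\C)$ and formula~\eqref{fo2} to see that $h\mapsto h'$ acts surjectively on $\h_+$ — so a smooth homomorphism from $\g_+$ or from $\overline{\g_+}$ to $\C^*$ has zero differential and, these groups being connected, is trivial. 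Therefore $E|_{\g_+}=0$ forces $\psi\circ i_*=1$, whence $\psi=1$ since $i_*$ is an isomorphism; thus $E$ is trivial and $\Phi$ is injective.
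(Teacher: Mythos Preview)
Your approach is essentially the same as the paper's: both reduce the question to the exact sequence
$$\Hom(\overline{\mathcal K},\C^*)\to\Hom(\pi_1(\mathcal K),\C^*)\to\mathop{\rm Ext}(\mathcal K,\C^*)\to H^2_{\rm c}(\Lie\mathcal K,\C),$$
applied to $\mathcal K=\g^0$ and $\mathcal K=\g_+$, verify that the left-hand term vanishes in both cases (Theorem~\ref{smooth} for $\overline{\g^0}$, perfectness of $\Lie_{\C}\g_+$ for $\overline{\g_+}$), and compare via the isomorphism $\pi_1(\g_+)\xrightarrow{\sim}\pi_1(\g^0)$. The only difference is packaging: the paper cites this exact sequence directly from the literature (Segal, Pressley--Segal, and Neeb's Theorem~7.12), thereby dispatching in one line the integration step you flag in Step~1 as ``the principal technical difficulty''; your Steps~1--2 effectively reconstruct that sequence by hand, and your Step~3 matches the paper's final diagram chase.
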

\begin{proof}
Let $\mathcal K$ be  any connected Lie group  (modelled on a locally convex  topological vector space),
and $\overline{ \mathcal K}$ be its universal covering group.

There is the following exact sequence:
\begin{equation}  \label{monodr}
\Hom(\overline{ \mathcal K}, \C^*)  \lrto \Hom( \pi_1( {\mathcal K}) , \C^* )  \lrto  \mathop{\rm Ext} ( {\mathcal K} , \C^*)  \lrto H^2_{\rm c}(\Lie {\mathcal K}, \C)  \, \mbox{,}
\end{equation}
where the first $\Hom$ is the group of smooth homomorphisms, the first arrow is given by the restriction of homomorphisms to the subgroup $\pi_1( {\mathcal K})  \subset \overline{ \mathcal K}$,
the second arrow assigns to a group homomorphism $\gamma : \pi_1( {\mathcal K}) \to \C^*$ the quotient of $\overline{\mathcal K} \times \C^*$ modulo the group which is the graph of~$\gamma$, the last arrow is the natural homomorphism described above.

Exact sequence~\eqref{monodr} easily follows from the usual reasoning as in~\cite[Prop.~(7.4)]{Se}  and~\cite[\S~4.5]{PS}, where a vector space  splitting of a central extension of tangent Lie algebras is considered as the left invariant connection on the principal $\C^*$-bundle of central extension (when the central extension of Lie algebras comes from a central extension of Lie groups), the Lie algebra $2$-cocycle of the central extension is considered as the curvature of this connection, and when this $2$-cocycle (or curvature) is zero, then the monodromy of this connection gives the homomorphism from
the group $\pi_1( {\mathcal K})$ to the group  $\C^*$.

Note that more general exact sequence than~\eqref{monodr} (when this sequence is extended  to the left and to the right, and when $\C^*$ is replaced by some Abelian and possibly infinite-dimensional Lie group) is obtained
 in~\cite[Theorem~7.12]{Neeb2}.

We will apply~\eqref{monodr} when ${\mathcal K} = \g^0$ and ${\mathcal K} = \g_+ $.

By Theorem~\ref{smooth}, the group $\Hom(\overline{ \g^0}, \C^*)$ is trivial.

We will prove that the group  $\Hom(\overline{ \g_+}, \C^*)$ is also trivial.

By Lemma~\ref{sl}, the Lie algebra $\Lie \g_+$ of the Lie group $\g_+$
is the subalgebra  of the complex Lie algebra $\Lie_{\C}  \g_+ = \h_+  \rtimes sl(2, \C)$.

Consider the dense Lie subalgebra $\Lie_{\C}^{f}  \g_+ = \C[z] \rtimes sl(2, \C)$ of the Lie algebra $\Lie_{\C}  \g_+$.
The Lie algebra $\Lie_{\C}^{f} \g_+$ has the basis which consists of all elements $e_n=z^n$, where $n \ge 0$, and elements $d_m = z^{m+1} \frac{d}{dz}$, where $m= -1, 0, 1$, with the following relations (see formula~\eqref{fo1})
$$
[d_m, d_n]= (m -n)d_{n+m}  \, \mbox{,}  \qquad [d_m, e_n] = - n e_{n+m}  \, \mbox{,}  \qquad [e_n, e_m] =0 \, \mbox{.}
$$
Hence, the Lie algebra $\Lie_{\C}^{f}  \g_+$ is perfect, i.e.~ $\left[\Lie_{\C}^{f} \g_+ , \, \Lie_{\C}^{f} \g_+ \right] = \Lie_{\C}^{f}  \g_+$.

Now let $\tau$ be any element from $\Hom(\overline{ \g_+}, \C^*)$. Then the differential $(d \tau)_e$ at the identity element $e \in \overline{ \g_+}$ is the continuous homomorphism from the Lie algebra $\Lie { \g_+}$ to the Abelian Lie algebra $\C$. As in the proof of Theorem~\ref{smooth},   the  homomorphism $(d \tau )_e$ can be uniquely extended to the $\dr$-linear homomorphism from the Lie algebra $\Lie_{\C}  \g_+$ to the Lie algebra $\C$. But the last homomorphism will be zero, since
restricted to the dense Lie subalgebra $\Lie_{\C}^{f}  \g_+$ it is zero, because $\Lie_{\C}^{f}  \g_+$ is perfect  and $\left[ \C, \C  \right] = 0$.

Hence $(d \tau)_e$ is zero. As in the proof of Theorem~\ref{smooth},  this implies that the differential $(d \tau)_v$ is zero for any point $v \in \overline{ \g_+}$. Since $\overline{ \g_+}$ is connected, we have that the homomorphism
$\tau$ is trivial.

The embedding of Lie groups $\g_+  \hookrightarrow \g^0$ induces isomorphism of their fundamental groups that are equal to  $\Z \times \Z$ (see Section~\ref{Expl_descr} and formula~\eqref{act-SL}). Therefore the statement of the theorem follows from  a commutative  diagram written with the help of  exact sequence~\eqref{monodr}:
$$
 {\xymatrix{
1 \ar[r] &  \Hom ( \pi_1( \g^0) , \C^* )  \ar[d]  \ar[r] & \mathop{\rm Ext} ( \g^0 , \C^*)   \ar[d] \ar[r] & H^2_{\rm c}(\Lie {\g^0}, \C) \ar[d] \\
1 \ar[r] &   \Hom ( \pi_1( \g_+) , \C^* ) \ar[r]   &  \mathop{\rm Ext} ( \g_+ , \C^*)  \ar[r] & H^2_{\rm c}(\Lie {\g_+}, \C)  \, \mbox{,}
 }}
 $$
where
the first vertical arrow is an isomorphism, and the horizontal sequences are exact sequences.
\end{proof}

\begin{Th}  \label{Th-eq}
Consider the multiplicative notation for the group law in the Abelian group~$H_{\rm sm}^2(\g, \C^*)$  (see Remark~\ref{Rem-coh}).
In  $H_{\rm sm}^2(\g, \C^*)$ the $12$th power of the determinant central extension (see formula~\eqref{det-centr}) is equal to
\begin{equation}  \label{fin-th}
  \langle \Lambda, \Lambda \rangle^6 \cdot    \langle \Lambda, \Omega \rangle^{-6} \cdot \langle \Omega, \Omega \rangle  \, \mbox{,}
\end{equation}
In other words, the $12$th power of the determinant central extension is equivalent to the central extension given by the $2$-cocycle~\eqref{fin-th}.
\end{Th}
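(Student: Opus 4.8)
The plan is to deduce the equality in $H^2_{\rm sm}(\g,\C^*)$ from the corresponding equality over the identity component $\g^0$, which in turn is forced by the rigidity statement of Theorem~\ref{Th7}. Write $E$ for a smooth $2$-cocycle representing the $12$th power of the determinant central extension~\eqref{det-centr}, and $E' = \langle\Lambda,\Lambda\rangle^6\cdot\langle\Lambda,\Omega\rangle^{-6}\cdot\langle\Omega,\Omega\rangle$. By Theorem~\ref{Th7}, the class of $E|_{\g^0}$ in $\mathop{\rm Ext}(\g^0,\C^*)$ is determined by its image in the continuous Lie algebra cohomology $H^2_{\rm c}(\Lie\g^0,\C)$ together with its restriction to $\mathop{\rm Ext}(\g_+,\C^*)$, and likewise for $E'|_{\g^0}$; so it suffices to check that $E|_{\g^0}$ and $E'|_{\g^0}$ agree on both invariants, and then to promote the resulting equivalence over $\g^0$ to an equivalence over $\g$.

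For the Lie algebra invariant, the Lie algebra $2$-cocycle of $E|_{\g^0}$ is $12\,\Lie D$ and that of $E'|_{\g^0}$ is $6\,\Lie\langle\Lambda,\Lambda\rangle - 6\,\Lie\langle\Lambda,\Omega\rangle + \Lie\langle\Omega,\Omega\rangle$ (passing to Lie algebras via formula~\eqref{Lie-coc} turns Baer products into sums and $\C^*$ into $\C$). These two Lie algebra $2$-cocycles coincide by Theorem~\ref{main-Lie}, so $E|_{\g^0}$ and $E'|_{\g^0}$ have equal images in $H^2_{\rm c}(\Lie\g^0,\C)$.

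For the restriction to $\g_+ = \h^*_+\rtimes{\rm PSL}(2,\dr)$, I would show that each of the four cocycles $D$, $\langle\Lambda,\Lambda\rangle$, $\langle\Lambda,\Omega\rangle$, $\langle\Omega,\Omega\rangle$ restricts to the \emph{trivial} cocycle there, so that $E|_{\g_+}$ and $E'|_{\g_+}$ are both trivial. For $D$: the action of $\g_+$ on $\h$ preserves $\h_+$, hence $g_{+-}=0$ for $g\in\g_+$ and $(g_1g_2)_{++} = (g_1)_{++}(g_2)_{++}$; with the section $g\mapsto(g,g_{++})$ from the proof of Theorem~\ref{th-3}, formula~\eqref{2-coc} gives $D(g_1,g_2)=\det(1)=1$ on $\g_+\times\g_+$. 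For the cup-product cocycles: the image of ${\rm PSL}(2,\dr)$ in $\DS$ is ${\rm PSU}(1,1)$, whose elements extend to M\"obius automorphisms of $\overline{D}_1$; consequently for $g=(d,f)\in\g_+$ both $\Lambda(g)=d$ and $\Omega(g)=(f^{\circ-1})'$ are holomorphic and non-vanishing on a neighbourhood of $\overline{D}_1$ and have winding number $0$, i.e. lie in $\h^*_+$, and the $\g_+$-action on $\h^*$ (composition with $f^{\circ-1}$, which preserves the disk) maps $\h^*_+$ into itself. Hence in every evaluation $\mathbb T(a,b)$ occurring in $\langle\Lambda,\Lambda\rangle$, $\langle\Lambda,\Omega\rangle$ or $\langle\Omega,\Omega\rangle$ on $\g_+\times\g_+$ one has $a,b\in\h^*_+$, so by formula~\eqref{form-CC} the value is $\exp\!\bigl(\tfrac{1}{2\pi i}\oint_{S^1}\log a\,\tfrac{db}{b}\bigr)$ with $\log a$ and $db/b$ holomorphic inside $S^1$, whence $\mathbb T(a,b)=1$ by Cauchy's theorem. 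Thus $E|_{\g_+}$ and $E'|_{\g_+}$ are both trivial, and Theorem~\ref{Th7} yields $E|_{\g^0} = E'|_{\g^0}$ in $\mathop{\rm Ext}(\g^0,\C^*)$, hence in $H^2_{\rm sm}(\g^0,\C^*)$.

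Finally, to pass from $\g^0$ to $\g$: by Proposition~\ref{semi} we have $\g=\g^0\rtimes\Z$, a central extension of the discrete group $\Z$ by $\C^*$ is trivial, and two lifts of the action of $1\in\Z$ to an automorphism of a chosen central extension of $\g^0$ (trivial on $\C^*$) differ by a smooth homomorphism $\g^0\to\C^*$, which is trivial by Theorem~\ref{smooth}; hence, exactly as in the proof of Theorem~\ref{Th5}, a central extension of $\g$ by $\C^*$ is determined up to equivalence by its restriction to $\g^0$. Since $E$ and $E'$ are central extensions of $\g$ whose restrictions to $\g^0$ are equivalent by the previous step, $E$ and $E'$ are equivalent over $\g$, which is the assertion. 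I expect the main obstacle to be the $\g_+$-restriction: one must verify that the relevant cocycles vanish on the nose with mutually compatible sections (so that Theorem~\ref{Th7} applies cleanly) and that the geometric facts about ${\rm PSU}(1,1)$ preserving $\overline{D}_1$ and about $\Lambda,\Omega$ landing in $\h^*_+$ are used correctly; the Lie-algebra input and the $\g^0\to\g$ bookkeeping are then routine given Theorems~\ref{main-Lie}, \ref{smooth} and~\ref{Th5}.
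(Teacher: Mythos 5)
Your proposal is correct and follows essentially the same route as the paper's proof: reduce to $\g^0$ via Theorem~\ref{Th5}, match the Lie algebra $2$-cocycles by Theorem~\ref{main-Lie}, and check triviality of $D$ and of the cup-product cocycles on $\g_+$ (using that $\g_+$ preserves $\h_+$, so $g_{+-}=0$, and that all arguments fed to $\mathbb T$ lie in $\h^*_+$ with vanishing winding number), then invoke Theorem~\ref{Th7}. Your extra details (Cauchy's theorem for $\mathbb T=1$, the explicit check that the $\g_+$-action preserves $\h^*_+$) just spell out what the paper states more briefly.
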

\begin{proof}
By Theorem~\ref{Th5} it is enough to prove the statement of the theorem
after restriction the central extensions  to the subgroup $\g^0$ of the group $\g$.

Now we will apply Theorem~\ref{Th7}. The determinant central extension restricted to $\g^0$ has the canonical $2$-cocycle $D$ (see Theorem~\ref{th-3}).
By Theorem~\ref{main-Lie}    the corresponding Lie algebra $2$-cocycles (for $D$ and for~\eqref{fin-th})  coincide.

From formula~\eqref{2-coc}
it follows that the restriction of $D$ to the subgroup $\g_+$ is trivial, since any element from $\g_+$  has the  part $\pm$ of the block matrix~\eqref{block-mat} equal to zero.
Besides,
every of $2$-cocycles $\langle \Lambda, \Lambda \rangle$,     $\langle \Lambda, \Omega \rangle $,  $ \langle \Omega, \Omega \rangle$ is trivial after restriction to the subgroup $\g_+$,
because in Definition~\ref{bracket}  we apply formula~\eqref{form-CC} for $\mathbb T$  to functions $f$ and $g$ with the winding number zero (see Proposition~\ref{wind-diff}) and with the property  that the functions $\log f$ and $\log g$ belong to $\h_+$.

\end{proof}

\begin{nt} \label{coc-same} \em
Over the subgroup $\g^0$ of $\g$ there is the canonical smooth section of the determinant central extension that leads to the smooth $2$-cocycle $D$ (see Theorem~\ref{th-3}).
The proof of Theorem~\ref{Th-eq} gives the equality of the $2$-cocycles $D^{12}$ and~\eqref{fin-th} only modulo the possible $2$-coboundary.
Note that by Theorem~\ref{smooth}   there is a unique smooth function  $ \tau : \g^0 \to \C^*$ that defines this $2$-coboundary
$$(g_1, g_2) \longmapsto \frac{ \tau(g_1) \tau(g_2)}{\tau(g_1 g_2)} \, \mbox{,}$$
where $(g_1, g_2) \in \g^0 \times \g^0$.
There is an interesting question: to find the function $\tau$ explicitly.
\end{nt}

\begin{nt}  \em
The formal (algebraic) analog of Theorem~\ref{Th-eq} was proved in~\cite[Theorem~7]{O2}. This is the local Deligne-Riemann-Roch isomorphism for line bundles, see~\cite[Th\'eor\`eme~9.9]{D1} (and also \cite[Section~1]{O2}).
\end{nt}

\begin{nt}  \label{referee}  \em
The interesting formulas were obtained in~\cite[\S~4.10]{Ner1}, \cite{Ner2}, \cite{Ner3} for the central extensions of the group $\mathop{\rm Diff}^+ (S^1) $
of orientation preserving
diffeomorphisms of the circle.
In particular, Yu.~A.~Neretin calculated the canonical $2$-cocycles on the group $\mathop{\rm Diff}^+ (S^1) $ with coefficients in
 the trivial $\mathop{\rm Diff}^+ (S^1) $-module
 $\C^*$. These $2$-cocycles are obtained from the projective  highest weight representations of $\mathop{\rm Diff}^+ (S^1)$. These representations are constructed by the restriction of the projective Weil representation through the
 embeddings of $\mathop{\rm Diff}^+ (S^1)$ to the group ${\rm ASp}(2 \infty, \dr)$ of affine transformations  generated by the elements of the infinite-dimensional symplectic group and some shifts. The projective Weil representation of  ${\rm ASp}(2 \infty, \dr)$
  is the  action by the Gaussian operators on the Fock space related with the infinite-dimensional  Hilbert space. Besides, there is the formula at the end of~\cite[\S~4.10]{Ner1} that relates the Fredholm determinants of operators of type  $1 + I$, where $I$ is the special trace-class operators acting in the infinite-dimensional Hilbert space,  with the formula obtained with the help of functions $\exp$, $\log$, derivatives,  $d \log $ and $\int_{S^1}$. The both parts of this formula use the decomposition of elements of $\mathop{\rm Diff}^+ (S^1)$ via the welding, cf.~Section~\ref{Conf-weld}.

  It is an interesting problem to relate these explicit  $2$-cocycles and formulas with our  $2$-cocycle $D$ (or $D^{12}$) from Theorem~\ref{th-3} and our $2$-cocycle \eqref{fin-th} after restriction to the subgroup $\DS$ of the group $\g$, see also Remark~\ref{coc-same}. Note that the  $2$-cocycles~\eqref{fin-th} and $\langle \Omega, \Omega \rangle$ coincide after restriction to $\DS$.

\end{nt}

\section{Deligne cohomology and the Gysin map}
\label{DelGys}

\subsection{Deligne cohomology}  \label{Del-coh}

We recall the definition and some properties of Deligne cohomology on (finite dimensional) smooth manifolds (see, e.g.,~\cite[\S~1]{EV}, where the corresponding definitions and properties are given for complex analytic manifolds, but what we need is true in our smooth manifolds case too, or cf.~\cite[\S~1.5]{Bry}). (We will always assume further in the article that finite-dimensional smooth manifolds are paracompact.)

Let $Y$ be a (finite dimensional) smooth manifold. By $\oo_Y$ and $\oo_Y^*$ denote the sheaves of smooth $\C$-valued and  $\C^*$-valued functions on $Y$ correspondingly.
For any integer $k \ge 0$ by $\Omega_Y^{k}$ denote the sheaf of smooth $k$-differential forms with complex coefficients on $Y$. For any integer $p \ge 0$ define the Deligne complex $\dz(p)_{D}$ on $Y$ in the following way:
$$
0 \lrto (2  \pi i)^p \dz \lrto \oo_Y \xlrto{d} \Omega^1_Y \xlrto{d} \ldots \xlrto{d} \Omega^{p-1}_Y   \lrto 0  \, \mbox{,}
$$
where $(2  \pi i)^p \dz$ is in degree zero, and the map $d$ is exterior diffrentiation.
The Deligne complex $\dz(0)_{D}$ is just the constant sheaf $\dz$ placed in degree zero.

The Deligne cohomology ${H}^q_D(Y, \dz(p)) = \mathbb{H}^q (Y, \dz(p)_{D}) $ is the $q$-th hypercohomology of the Deligne complex.

There is a multiplication
$$
\cup \; : \;  \dz(p_1)_{D} \otimes \dz(p_2)_D \lrto \dz(p_1 + p_2)_D
$$
which is the usual tensor multiplication with $\dz$ (in all degrees) if $p_1=0$ or $p_2=0$, and when $p_2 >0$ the multiplication $\cup$ is given by
$$
 x \cup y = \left\{
\begin{array}{ll}
x \cdot y & \quad \mbox{if} \quad \deg x =0 \\
x \wedge dy & \quad \mbox{if} \quad \deg x > 0 \quad \mbox{and} \quad \deg y = p_2 \\
0 &  \quad \mbox{otherwise.}
\end{array}
  \right.
$$
The multiplication $\cup$ is a morphism of complexes.
The multiplication $\cup$ is associative and is commutative up to homotopy.
Using methods from homological algebra (or the calculation by the \v{C}ech cohomology), one can see that the multiplication $\cup$ gives an associative ring structure on
$\bigoplus\limits_{p,q} H_D^q(Y, \dz(p))$. The product in this ring is anticommutative, i.e. for $\alpha  \in H_D^{q_1}(Y, \dz(p_1))$ and $\beta  \in H_D^{q_2}(Y, \dz(p_2))$, we have
$\alpha \cup \beta = (-1)^{q_1 q_2} \beta \cup \alpha $.

\bigskip

If $p = 0$, then ${H}^q_D(Y, \dz(0))$ is nothing but singular cohomology $H^q (Y, \dz)$. Moreover, for any integer $p \ge 0$ there is the natural map as the composition of maps
\begin{equation}  \label{pro}
{H}^q_D(Y, \dz(p))  \lrto {H}^q(Y, (2  \pi i)^p \dz) \lrto {H}^q(Y,  \dz)  \, \mbox{,}
\end{equation}
where the first map is induced by the map from $\dz(p)_{D}$ to the complex which is the constant sheaf   $(2  \pi i)^p \dz$ placed in degree zero, and the second map is the isomorphism induced by the multiplication with $(2 \pi i)^{-p}$.

If $p=1$,  then ${H}^q_D(Y, \dz(1))$ is isomorphic to $H^{q-1}(Y, \oo_Y^*)$, since $\dz(1)_D$ is quasi-isomorphic to $\oo_Y^*[-1]$ via the exponential map.

If $p=2$, $q =2$, then $\dz(2)_D$ is quasi-isomorphic to $\left( \oo_Y^*  \xlrto{s \mapsto ds/s} \Omega^1_Y    \right)[-1]$ via the map $x \mapsto \exp(x/(2 \pi i))$ on $\oo_Y$ and the multiplication with $(2\pi i)^{-1}$ on~$\Omega^1_Y$, and therefore
$$
{H}^2_D(Y, \dz(2))  = {\mathbb{H}}^1(Y, \oo_Y^*  \lrto \Omega^1_Y)  \, \mbox{,}
$$
where the last group is canonically isomorphic to the group of complex line bundles on $Y$ with a connection (what is easy to see by the \v{C}ech cohomology).

Via these isomorphisms, the multiplication $\cup$ described above gives the pairing
\begin{equation}  \label{pair}
\cup  \; : \; H^0(Y, \oo_Y^*)  \times H^0 (Y, \oo_Y^*)  \lrto {\mathbb{H}}^1(Y, \oo_Y^*  \lrto \Omega^1_Y)  \, \mbox{.}
\end{equation}
Consider $f$ and $g$ from $H^0(Y, \oo_Y^*)$ and the open cover $\{ U_{\alpha} \}$ of $Y$ such that for any $\alpha$ a branch $\log_{\alpha} f = \log (f |_{U_{\alpha}})$ is defined.
Then the element $f \cup g$ is given by the \v{C}ech cocycle $(\xi_{\alpha \beta}, \omega_{\alpha})$  (see also~\cite[\S~1.4, iv)]{EV}), where
$$
\xi_{\alpha \beta} = g^{\frac{1}{2 \pi i} (\log_{\alpha} f - \log_{\beta} f )}  \in \oo_Y^*(U_{\alpha} \cap U_{\beta}) \, \mbox{,}  \qquad \omega_{\alpha} = \frac{-1}{2 \pi i} \log_{\alpha} f \, \frac{dg}{g}  \in \Omega^1_Y(U_{\alpha})   \, \mbox{.}
$$
The \v{C}ech cocycle $(\xi_{\alpha \beta})$ defines the line bundle on $Y$ such that $s_{\beta} = \xi_{\alpha \beta } s_{\alpha}$, and $1$-forms $\omega_{\alpha}$ define the connection $\nabla$ on this bundle
such that $\nabla(s_{\alpha}) = \omega_{\alpha} s_{\alpha}$.

If $Y = S^1$, take a point $x_0 \in S^1$. Consider the open cover $S^1 = U_0 \cup U_1$, where $U_1 = S^1 \setminus x_0$ and $U_0$ is a very small neighbourhood of $x_0$. From the above description it is easy to see that
the monodromy of the connection associated with $f \cup g$ is given as
$$  \pi_1(S^1) = \dz \ni 1 \longmapsto    {\mathbb T}(f,g)   \, \mbox{,}$$
 see formula~\eqref{expli} (one has also to use that the horizontal section on $U_1$ is $\exp \int_{U_1} (- \omega_1) s_1 $),
 and $1 \in \pi_1(S^1)$ is considered
 in the counterclockwise direction.

\subsection{Gysin map, $\cup$-products and the map $\mathbb T$}  \label{Gysin}

Let $\pi : M \to B$ be a fibration in oriented circles. In other words,  $\pi$ is a proper surjective submersion between (finite dimensional) smooth manifolds with $S^1$ fibers, which is, by Ehresmann's lemma, a locally trivial fibration which has a collection of transitions functions with value in
the group  of orientation preserving diffeomorphisms of $S^1$.

Then the sheaf
$R^1 \pi_* \dz$ is isomorphic to the sheaf $\dz$. This isomorphism can be choosen up to multiplication by $- 1$, and we fix this isomorphism such that the following diagram is commutative:
\begin{equation}   \label{wind-diagr}
 {\xymatrix{
   {H^0(S^1, \oo_S^*)} \ar[d] \ar[rr] &&  {H^0(S^1, 2 \pi i \dz)} \ar[d]  \\
  H^1(S^1, 2 \pi i \dz) \ar[rr]   && 2 \pi i \dz
 }}
 \end{equation}
 where the upper arrow is induced by the map of sheaves $s \mapsto 2 \pi i \nu(s)$ ($\nu$ is the winding number), the left vertical arrow is induced by the exponential sheaf sequence (see also~\eqref{sheaf-exp} below), and the bottom arrow fixes the isomorphism $H^1(S^1,  \dz) \to  \dz$.

There is the Leray spectral sequence $$ E^{pq}_2 = H^p (B, R^q \pi_* \dz) \Rightarrow   H^{p+q}(M , \dz)  \, \mbox{.}$$
From this spectral sequence there is the Gysin map, as the composition of the following maps:
$$
\pi_* \; : \; H^p(M, \dz)  \lrto E_{\infty}^{p-1,1}  \subset E_2^{p-1,1} = H^{p-1}(B, R^1 \pi_* \dz ) \lrto H^{p-1}(B, \dz)  \, \mbox{.}
$$

We will use the multiplicative notation for the group law in an Abelian group (e.g., the cohomology groups and other groups) if the group $\oo_M^*$ is used in the notation of this group, and we will use the additive notation for the group law if the group $\dz$ or the complex~$\dz(p)$ is used in the notation of a group.

There is the exponential sheaf  sequence
\begin{equation}  \label{sheaf-exp}
0 \lrto 2 \pi i \dz  \lrto \oo_M   \xlrto{\exp}   \oo_M^*  \lrto 1  \, \mbox{.}
\end{equation}
Note that the sheaf $\oo_M$ is soft and therefore this sheaf is acyclic.

From these sequences and $\cup$-product in singular cohomology it follows the following composition of maps:
\begin{multline}  \label{first-seq}
H^l(M, \oo_M^*)  \times H^m(M, \oo_M^*) \lrto H^{l+1}(M, \dz)  \times H^{m+1}(M, \dz) \stackrel{\cup}{\lrto}   H^{l+m+2}(M, \dz)  \xlrto{\pi_*}   \\
 \xlrto{\pi_*}   H^{l+m+1}(B, \dz)  \, \mbox{.}
\end{multline}

Since $R^q \pi_* \oo_M^* = \{1\}$ for any integer $q \ge 1$, from the  Leray spectral sequence for the  sheaf $\oo_M^*$ we have for any integer $p \ge 0$ the isomorphism
\begin{equation}  \label{spec-iso}
{H^p(M, \oo_M^*) \lrto H^{p}(B, \pi_* \oo_M^* )}  \, \mbox{.}
\end{equation}
Using $\cup$-products in the cohomology of sheaves and the exponential sheaf sequence on~$B$, we have a composition of maps
\begin{multline}   \label{second-sec}
H^l(M, \oo_M^*)  \times H^m(M, \oo_M^*) \lrto  H^l(B, \pi_* \oo_M^*)  \times H^m(B,  \pi_* \oo_M^*)  \stackrel{\cup}{\lrto}   \\
\stackrel{\cup}{\lrto}
 H^{l+m}(B, \pi_* \oo_M^*  \otimes_{\dz} \pi_* \oo_M^*)   \xlrto{{\mathbb T}^{(-1)^{m+1}}}   H^{l+m}(B, \oo_B^*)  \lrto H^{l+m+1}(B, \dz)   \, \mbox{.}
\end{multline}
Here the map ${{\mathbb T}^{(-1)^{m+1}}} $ is induced by the corresponding map of sheaves, where on every fiber $S^1$ the map ${\mathbb T}^{(-1)^{m+1}}$ (composition of $\mathbb T$ and $a \mapsto a^{(-1)^{m+1}}$) is applied.

\begin{nt}  \em
The map ${{\mathbb T}^{(-1)^{m+1}}} $ in \eqref{second-sec}  is factored through $H^{l+m}(B, K_2^M(\pi_* \oo_M^*))$,  \linebreak where $K_2^M(\pi_* \oo_M^*)$ is the sheaf on $B$ associated with the presheaf of Milnor $K_2$-groups
$U \mapsto   K_2^M(\oo_M^*(\pi^{-1} (U)))$, see Section~\ref{bimult}.
\end{nt}

\medskip

\begin{Th}   \label{th-9}
The composition of maps~\eqref{first-seq} coincides with the composition of maps~\eqref{second-sec}.
\end{Th}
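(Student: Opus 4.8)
The plan is to verify the coincidence of the two compositions at the level of \v{C}ech cocycles on $B$, reducing both sides to the description of the pairing $\mathbb T$ as the monodromy of a line bundle with connection on each fiber $S^1$, which was already established at the end of Section~\ref{Del-coh}. First I would choose a good open cover: since $\pi$ is a locally trivial fibration in circles, pick a cover $\{V_i\}$ of $B$ over which $\pi$ is trivial, $\pi^{-1}(V_i) \cong V_i \times S^1$, and refine it so that the classes in $H^l(M,\oo_M^*)$ and $H^m(M,\oo_M^*)$ (equivalently, via~\eqref{spec-iso}, in $H^l(B,\pi_*\oo_M^*)$ and $H^m(B,\pi_*\oo_M^*)$) are represented by \v{C}ech cocycles valued in $\pi_*\oo_M^*$, i.e.\ by families of fiberwise-$S^1$ functions $f_{i_0\dots i_l}$ and $g_{j_0\dots j_m}$. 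The key point is that both sides of the claimed identity, when unwound, build the \emph{same} line bundle with connection on $B$ out of these data, up to the fiberwise integration $\pi_*$ and the identification $R^1\pi_*\dz \simeq \dz$ fixed by diagram~\eqref{wind-diagr}; the output in $H^{l+m+1}(B,\dz)$ is the obstruction/characteristic class of that bundle.

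The main steps, in order, would be: (i) Unwind the composition~\eqref{first-seq}. The class of $f$ in $H^{l+1}(M,\dz)$ is the Bockstein image under~\eqref{sheaf-exp}, given by the \v{C}ech coboundary of branches of $\frac{1}{2\pi i}\log f_{i_0\dots i_l}$; similarly for $g$; one forms the cup product in $H^{l+m+2}(M,\dz)$ and then applies the Leray/Gysin $\pi_*$, which amounts to fiberwise integration over $S^1$ of the relevant \v{C}ech representative. (ii) Unwind the composition~\eqref{second-sec}. Here one first pushes forward to $B$ via~\eqref{spec-iso}, forms the cup product $\pi_*\oo_M^* \otimes \pi_*\oo_M^*$ on $B$, applies the sheaf map ${\mathbb T}^{(-1)^{m+1}}$ \emph{fiberwise}, landing in $H^{l+m}(B,\oo_B^*)$, and takes the Bockstein to $H^{l+m+1}(B,\dz)$. (iii) Compare: because $\mathbb T(f,g)$ is literally the monodromy of the connection attached to the Deligne cup product $f\cup g$ on a single circle (Section~\ref{Del-coh}, around diagram~\eqref{wind-diagr}), the fiberwise application of $\mathbb T$ followed by the Bockstein on $B$ produces, cochain by cochain, exactly the fiberwise integral over $S^1$ of the \v{C}ech representative of $c_1$ of the Deligne cup product on $M$ — which is precisely what $\pi_* (c_1(f)\cup c_1(g))$ computes via the Leray spectral sequence. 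The sign $(-1)^{m+1}$ and the choice in diagram~\eqref{wind-diagr} are there to make the identification of $R^1\pi_*\dz$ with $\dz$ compatible with the orientation used in defining $\mathbb T$ (the counterclockwise generator of $\pi_1(S^1)$) and with the Koszul signs in the cup product when the $B$-degrees $l,m$ are shuffled past the fiber degree $1$; tracking these carefully is what pins down the exponent.

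The hard part will be the bookkeeping of signs and of the spectral-sequence edge maps in step (iii): one must check that the Gysin map's identification $E_2^{p-1,1} = H^{p-1}(B, R^1\pi_*\dz) \to H^{p-1}(B,\dz)$ is compatible with the ad hoc fiberwise-$\mathbb T$ map of sheaves, including the factor $(-1)^{m+1}$, and that no extra sign is introduced when the cup product on $M$ (degrees $l+1$ and $m+1$) is compared with the cup product on $B$ (degrees $l$ and $m$, fiber degree $1$). A clean way to organize this is to work entirely with the double complex computing $\mathbb H^*(M,-)$ for a cover pulled back from $B$ together with the trivializing fiber coordinate, so that ``fiberwise integration'' is an explicit chain map and both~\eqref{first-seq} and~\eqref{second-sec} become two routes through the same double complex; the identity then follows from associativity and (homotopy-)commutativity of $\cup$ on Deligne complexes, already recorded in Section~\ref{Del-coh}, plus the fiberwise monodromy computation. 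Everything else — existence of fine enough covers, softness/acyclicity of $\oo_M$ and $\oo_B$, the isomorphism~\eqref{spec-iso} — is standard and was set up in the preceding subsections.
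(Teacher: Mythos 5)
Your overall strategy---reduce everything to the description of $\mathbb T(f,g)$ as the monodromy of the line bundle with connection attached to the Deligne cup product on a fiber, and then match this against the Leray/Gysin edge map---is the same strategy the paper follows, and your remarks about the sign $(-1)^{m+1}$ and the normalization~\eqref{wind-diagr} are on target. However, your step (iii) is asserted rather than proved, and it is precisely the content of the theorem. The claim that ``the fiberwise application of $\mathbb T$ followed by the Bockstein on $B$ produces, cochain by cochain, exactly the fiberwise integral over $S^1$ of the \v{C}ech representative of $c_1$ of the Deligne cup product'' is where all the work lies: with $\dz$ coefficients there is no chain-level ``integration over the fiber'' in a \v{C}ech double complex, so your proposed organization does not by itself produce the required chain map, and the identification of the Leray edge map $H^{p}(M,\dz)\to H^{p-1}(B,R^1\pi_*\dz)$ with an explicit fiberwise operation has to be argued. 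A second concrete obstacle you gloss over: branches of $\log f$ on all of $\pi^{-1}(V_{i_0\dots i_l})$ need not exist when the fiberwise winding number is nonzero (this is exactly why the term $g(x_0)^{-\nu(f)}$ enters $\mathbb T$), so the Bockstein representatives cannot be taken with respect to a cover pulled back from $B$ alone, and the ``same double complex'' picture breaks at this point.

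The paper fills this gap by a genuinely sheaf-theoretic argument: first it uses the Grothendieck spectral sequence for the Deligne complexes $\dz(1)_D$, $\dz(2)_D$ and the product-compatibility of these spectral sequences (with the sign $(-1)^m$) to reduce the bilinear statement to comparing two pushforwards of a single class in $H^p_D(M,\dz(2))$, namely~\eqref{sequ1} (monodromy-type) and~\eqref{sequ2} (topological Gysin). Then the comparison is carried out not at the level of integer cochains but through the sheaves of smooth data: the $d\log$ image $I(\oo_M^*)\subset\Omega^1_M$, the relative forms $\pi_*\Omega^1_M/\pi_*I(\oo_M^*)$, and the sheaf-level fiberwise integral $\int_{S^1}$, tied back to $\dz$ by the exponential sequences via the commutative diagrams~\eqref{exp-diagr2} and~\eqref{gen-diagr} (the latter generalizing~\eqref{wind-diagr}), with the Godement-resolution description of the edge maps. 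If you want to complete your proposal, you should either reproduce this kind of reduction and sheaf-level comparison, or supply an honest chain-level substitute for fiber integration (e.g.\ via smooth forms) together with a treatment of the nonzero winding-number case; as it stands, the ``bookkeeping'' you defer is the theorem.
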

\begin{proof}
{\em Step~1.}
Consider the map
\begin{equation}  \label{one-map}
H^{l+m}(B, \pi_* \oo_M^*  \otimes_{\dz} \pi_* \oo_M^*)
\xlongrightarrow{{\mathbb T}^{-1}}
   H^{l+m}(B, \oo_B^*)  \, \mbox{,}
\end{equation}
Here  the map ${{\mathbb T}^{-1}} $ is induced by the corresponding map of sheaves, where on every fiber $S^1$ the map ${\mathbb T}^{-1}$ (composition of $\mathbb T$ and $a \mapsto a^{-1}$ ) is applied.

Note that  map~\eqref{one-map}
coincides with the composition of maps
\begin{equation}  \label{hyper}
H^{l+m}(B, \pi_* \oo_M^*  \otimes_{\dz} \pi_* \oo_M^*)
 \xlongrightarrow{} H^{l+m}(B, {\mathbb R}^2 \pi_* \dz(2)_D  )
\xlongrightarrow{\text{monodr.}^{-1}}
 H^{l+m}(B, \oo_B^*)  \, \mbox{.}
\end{equation}
Here ${\mathbb R}^2 \pi_*$ is the hyper-derived functor. In particularly, ${\mathbb R}^2 \pi_* \dz(2)_D $ is the sheaf associated with the presheaf $U \mapsto H^2_D(\pi^{-1}(U), \dz(2) )$, where  the group  $H^2_D(\pi^{-1}(U), \dz(2) )$ is also the group of complex line bundles with  connection on
an open set
$\pi^{-1}(U)$.  The first arrow in formula~\eqref{hyper} is induced by the pairing~\eqref{pair}.  The second arrow in formula~\eqref{hyper} is induced by the map of sheaves
${\mathbb R}^2 \pi_* \dz(2)_D \to \oo_B^*$ that
for each fiber $S^1$ gives the non-zero complex number which is
the image of the element
$-1  \in \pi_1(S^1)$ (considered in the clockwise direction) under the monodromy of a connection on a line bundle on the fiber.

There is the Grothendieck spectral sequence, which generalizes the Leray spectral sequence,
 $$ E^{pq}_2 = H^p (B, {\mathbb R}^q \pi_* \dz(l)_D ) \, \Longrightarrow \,   \mathbb{H}^{p+q}(M , \dz(l)_D) \, \mbox{.}$$
 Since ${\mathbb R}^q \pi_* \dz(2)_D = \{ 0\}$ for any integer $q >2$, from this spectral sequence when $l=2$ there is the (new) Gysin map, as the composition of the following maps:
$$
\pi_* \; : \; H^p_D(M, \dz(2)) =  {\mathbb H}^p (M, \dz(2)_D) \lrto E_{\infty}^{p-2,2}  \subset E_2^{p-2,2} = H^{p-2}(B, {\mathbb R}^2 \pi_* \dz(2)_D )   \, \mbox{.}
$$

Analogously, since ${\mathbb R}^q \pi_* \dz(1)_D = \{ 0\}$ for $q = 0$ and any integer $q >1$, there is the isomorphism
$$\pi_* \; : \;  H^{p+1}_D(M, \dz(1))  \lrto H^{p}(B, {\mathbb R}^1 \pi_* \dz(1)_D) $$
which coincides with  isomorphism~\eqref{spec-iso}.

Since the spectral sequences which we use are compatible with  the $\cup$-products up to a certain sign (see, e.g., \cite[Append.~A, \S~3;  Ch.~IV, \S~6.8]{Bre}),   the following diagram is commutative:
$$
 {\xymatrix{
   {H_D^{l+1}(M, \dz(1) ) } \, \, \times  \, \,   {H_D^{m+1}(M, \dz(1) ) } \ar@<1.7cm>[d]^{\pi_*}    \ar@<-1.7cm>[d]^{\pi_*}    \ar[rr]^-{\cup} &&   {H_D^{l+m+2}(M, \dz(2) )} \ar[d]^{\pi_*}  \\
  {H^{l}(B,  {\mathbb R}^1 \pi_*  \dz(1)_D ) }  \, \,  {\times}  \, \,   {H^{m}(B,  {\mathbb R}^1 \pi_*  \dz(1)_D ) }     \ar[rr]^-{(-1)^m \, \cup} &&   {H^{l+m}(B,  {\mathbb R}^2 \pi_*  \dz(1)_D ))}
 }}
 $$
 By construction of the multiplication $\cup$ between Deligne complexes, the following diagram is also commutative:
 $$
 {\xymatrix{
   {H_D^{l+1}(M, \dz(1) ) } \, \, \times  \, \,   {H_D^{l+1}(M, \dz(1) ) } \ar@<1.7cm>[d]    \ar@<-1.7cm>[d]    \ar[r]^-{\cup} &   {H_D^{l+m+2}(M, \dz(2) )} \ar[d]  \\
  {H^{l+1}(M,  \dz })  \, \,  {\times}  \, \,   {H^{m+1}(M,   \dz ) }     \ar[r]^-{\cup} &   {H^{l+m+2}(M,    \dz ))}
 }}
 $$
where the vertical arrows  are maps~\eqref{pro}.

Besides, for any integers $s , r \ge 1$ the following diagram is commutative
\begin{equation}  \label{exp-diagr}
{\xymatrix{
   {H_D^{s}(M, \dz(r) ) }  \ar[d]     \ar[r] &   {H^{s}(M,  \dz )}   \\
  {H_D^{s}(M,  \dz(1) })       \ar[r] &   {H^{s-1}(M,    \oo_M^* )}  \, \mbox{,}  \ar[u]
 }}
\end{equation}
where the upper arrow is the map~\eqref{pro},  the left vertical arrow is induced by the map of complexes $\dz(r)  \to \dz(1)$ given by multiplication with $(2 \pi i)^{1-r}$ and the truncation of the complex,
and the right vertical arrow is induced by the exponential sheaf sequence~\eqref{sheaf-exp}.

Therefore the statement  of the theorem will follow after we will  prove that a composition of maps (when an integer $p \ge 2$)
\begin{equation}  \label{sequ1}
H^p_D(M, \dz(2))  \xlrto{\pi_*}  H^{p-2}(B, {\mathbb R}^2 \pi_* \dz(2)_D )  \xlrto{\text{monodr.}^{-1}}  H^{p-2}(B, \oo_B^*)  \lrto H^{p-1}(B, \dz)
\end{equation}
coincides with a composition of maps
\begin{equation} \label{sequ2}
H^p_D(M, \dz(2)) \lrto H^p(M, \dz)  \xlrto{\pi_*}  H^{p-1}(B, R^1 \pi_* \dz)  \lrto H^{p-1}(B, \dz)  \, \mbox{,}
\end{equation}
where the last arrow in formula~\eqref{sequ1} follows from  the exponential sheaf sequence on~$B$, and the first arrow in formula~\eqref{sequ2} is the map~\eqref{pro}.

\medskip

{\em Step~2.} We will prove that the compositions of maps~\eqref{sequ1} and~\eqref{sequ2} coincide.

 Consider the map $I \colon \oo_M^*  \to \Omega^1_M$, where  $I(s)=    ds/s $. By $I(\oo_M^*)$ denote the image of $\oo_M^*$ under the map $I$, which is a subsheaf of the sheaf $\Omega^1_M$.  From an exact sequence of sheaves
 $$
 1 \lrto \dc^*  \lrto \oo_M^*  \xlrto{I}  I(\oo_M^*)  \lrto 1
  $$
  it follows that $R^q \pi_* I(\oo_M^*) = \{ 1 \}$
for any integer $q \ge 1$ (one has also to use the exponential sheaf sequences~\eqref{sheaf-exp}). Therefore there is the following exact sequence of  sheaves on $B$:
\begin{equation}    \label{I-seq}
1 \lrto \pi_* I(\oo_M^*) \lrto  \pi_* \Omega^1_M  \lrto \pi_* \left( \Omega^1_M/ I(\oo_M^*)  \right)  \lrto 0   \, \mbox{.}
\end{equation}

From exact sequence of complexes of sheaves on $M$
$$
1 \lrto \left( \oo_M^*  \to I(\oo_M^*) \right)  \lrto \left(\oo_M^* \to \Omega^1_M    \right)  \lrto \left( 1 \to  \Omega^1_M / I(\oo_M^*)     \right)  \lrto 1
$$
we have the composition of maps of sheaves on $B$
\begin{equation}   \label{new-comp}
{\mathbb R}^1 \pi_* \left( \oo_M^*  \to \Omega_M^1  \right)  \lrto {\mathbb R}^1 \pi_* \left(1 \to  \Omega^1_M / I(\oo_M^*)      \right)  \simeq \pi_*(\Omega^1_M / I(\oo_M^*) ) \simeq \pi_* \Omega^1_M / \pi_* I(\oo_M^*)  \, \mbox{,}
\end{equation}
where the last isomorphism follows from exact sequence~\eqref{I-seq}.

Since for any point $x \in B$ there is an open neighbourhood $U \ni x$ such that any line bundle on the open set $\pi^{-1}(U)$ is trivial, it is easy to see that the middle arrow in formula~\eqref{sequ1} coincides with a composition of maps
\begin{multline*}
 H^{p-2}(B, {\mathbb R}^2 \pi_* \dz(2)_D )  \simeq H^{p-2}(B, {\mathbb \dr}^1 \pi_* \left(\oo_M^* \to \Omega^1_M    \right)  )  \lrto \\
 \lrto H^{p-2}(B,  \pi_* \Omega^1_M / \pi_* I(\oo_M^*) )  \xlrto{\exp \int_{S^1}}  H^{p-2}(B, \oo_B^*)  \, \mbox{,}
\end{multline*}
where we used the composition of maps~\eqref{new-comp}, and the map  ${\exp \int_{S^1}}$ is induced by the map of sheaves that on every fiber $S^1$
is the composition of the exponential map and the map which is
 the integration (in the counterclockwise direction) of a differential $1$-form.

Therefore, using the evident commutative diagram of sheaves on $B$
$$
 {\xymatrix{
1 \ar[r]  & {\pi_* I(\oo_M^*)}  \ar[r]  \ar[d]^{\int_{S^1}}  & \pi_* \Omega^1_M  \ar[r] \ar[d]^{\int_{S^1}}   &   \pi_* \Omega^1_M / \pi_* I(\oo_M^*)  \ar[r]  \ar[d]^{\exp \int_{S^1}}  & 0 \\
0 \ar[r]  & 2 \pi i \dz  \ar[r]                             & \oo_B                \ar[r]^{\exp}                &  \oo_B^*   \ar[r]                                                           & 1  \, \mbox{,}
 }}
 $$
we have that the composition of maps~\eqref{sequ1} coincides
with the following composition of maps:
\begin{multline}
H^p_D(M, \dz(2))  \xlrto{\pi_*}
 H^{p-2}(B, {\mathbb R}^2 \pi_* \dz(2)_D )  \simeq H^{p-2}(B, {\mathbb \dr}^1 \pi_* \left(\oo_M^* \to \Omega^1_M    \right)  )  \lrto \\  \label{long-seq}
 \lrto H^{p-2}(B,  \pi_* \Omega^1_M / \pi_* I(\oo_M^*) )  \lrto  H^{p-1}(B, \pi_* I(\oo_M^*))  \xlrto{\int_{S^1}} H^{p-1}(B, 2 \pi i \dz)  \simeq H^{p-1}(B, \dz)  \, \mbox{.}
\end{multline}

Now the statement of the theorem will follow after we will prove that the composition of maps~\eqref{long-seq} coincides with the composition of maps~\eqref{sequ2}.

\medskip

{\em Step 3.} We will prove that the compositions of maps~\eqref{long-seq} and~\eqref{sequ2} coincide.

From the construction of the spectral sequence (using the Godement resolutions) it follows that a  piece of composition of maps~\eqref{long-seq}
\begin{multline*}
H^p_D(M, \dz(2))  \xlrto{\pi_*}
 H^{p-2}(B, {\mathbb R}^2 \pi_* \dz(2)_D )  \simeq H^{p-2}(B, {\mathbb \dr}^1 \pi_* \left(\oo_M^* \to \Omega^1_M    \right)  )  \lrto \\
 \lrto H^{p-2}(B,  \pi_* \Omega^1_M / \pi_* I(\oo_M^*) )  \lrto  H^{p-1}(B, \pi_* I(\oo_M^*))
\end{multline*}
coincides with a composition of maps
\begin{multline}
H^p_D(M, \dz(2))
\simeq H^{p-1} (M, \oo_M^* \to \Omega^1_M)
\lrto H^{p-1}(M, \oo_M^*) \lrto  \\ \lrto  H^{p-1}(M, I(\oo_M^*))  \simeq H^{p-1}(B, \pi_* I(\oo_M^*))  \, \mbox{,}  \label{last-iso}
\end{multline}
where the last isomorphism in formula~\eqref{last-iso} follows from the Leray spectral sequence for the sheaf $I(\oo_M^*)$, since
$R^q \pi_* I(\oo_M^*) = \{ 1 \}$
for any integer $q \ge 1$.

Now from the construction of the spectral sequence (using the Godement resolutions) it follows that the following diagram is commutative
\begin{equation}  \label{exp-diagr2}
{\xymatrix{
   {H^{p-1}(M, \oo_M^* ) }  \ar[d]     \ar[r] &   {H^{p}(M,  \dz )}  \ar[d]^{\pi_*} \\
  {H^{p-1}(B,  \pi_* \oo_M^* })       \ar[r] &   {H^{p-1}(B,    R^1 \pi_* \dz )}  \, \, \mbox{,}
  }}
\end{equation}
where the upper and bottom arrows are induced by the exponential sheaf sequence~\eqref{sheaf-exp}, the left vertical arrow is an isomorphisms as in formula~\eqref{spec-iso}.

Besides, it is easy to see that the following diagram of sheaves on $B$, which generalizes the diagram~\eqref{wind-diagr}, is commutative:
 \begin{equation}  \label{gen-diagr}
{\xymatrix{
   {\pi_* \oo_M^* }  \ar[d]     \ar[rr] &&   {R^1 \pi_*   \dz }  \ar[d] \\
  { \pi_* I( \oo_M^*) }       \ar[rr]^-{\frac{1}{2 \pi i}  \int_{S^1}} & &  {\, \, \dz }  \, \, \mbox{.}
  }}
\end{equation}

Now from formulas~\eqref{last-iso}--\eqref{gen-diagr} and diagram~\eqref{exp-diagr} (when $s=p$ and $r=2$) it follows that the compositions of maps~\eqref{long-seq}  and~\eqref{sequ2} coincide.

\end{proof}

\section{Circle fibrations and  topological Riemann-Roch theorem}
\label{sec-last}

\subsection{Group cohomology, gerbes and central extensions}   \label{gerbes}

Let $Y$ be a smooth (finite-dimensional) manifold. We will use notation from Section~\ref{Del-coh}.

Using the exponential sheaf sequence on $Y$, for any integer $p \ge 1$ we have an isomorphism
$$H^p(Y, \oo_Y^*)  \simeq H^{p+1}(Y, \dz)  \, \mbox{.}$$

Let $\mathcal K$ and $\mathcal N$ be   smooth Lie groups (possibly infinite-dimensional)  such that $\mathcal N$ is an Abelian group and the Lie group  $\mathcal K$ acts on the Lie group $\mathcal N$, i.e. $\mathcal N$ is
a $\mathcal K$-module.

Recall the definition of  Van Est smooth group  cohomology. They are  the cohomology of the following complex
\begin{equation} \label{van-est}
C^0 ({\mathcal K},{\mathcal N})  \stackrel{\delta_0}{\lrto} C^{1}({\mathcal K}, {\mathcal N}) \stackrel{\delta_1}{\lrto} \ldots \xrightarrow{\delta_{k-1}} C^{k}({\mathcal K}, {\mathcal N}) \stackrel{\delta_k}{\lrto} \ldots  \, \mbox{,}
\end{equation}
where $C^0({\mathcal K}, {\mathcal N})= {\mathcal N}$, $C^k({\mathcal K}, {\mathcal N})$ when $k \ge 1$ is the group of smooth maps from
the smooth (possibly infinite-dimensional) manifold
${\mathcal K}^{\times k}$ to the Abelian Lie group ${\mathcal N}$, and the differentials in complex~\eqref{van-est} are given as follows
\begin{multline*}
\mathop{\delta_{q}c} \, (g_1, \ldots, g_{q+1}) = g_1  \mathop{c}  (g_2, \ldots, g_{q+1}) \, \cdot \,  \prod_{i=1}^q  \mathop{c}  (g_1, \ldots, g_i g_{i+1}, \ldots, g_{q+1})^{(-1)^i} \,  \cdot\\
\cdot \,   \mathop{c}  (g_1, \ldots, g_q)^{(-1)^{q+1}}  \, \mbox{,}
\end{multline*}
where integers $q \ge 0$, elements $c \in C^q({\mathcal K},{\mathcal N})$, $g_j \in {\mathcal K}$ with $1 \le j \le q+1$. Besides, we use the multiplicative notation for the group laws in the  Abelian groups $\mathcal K$,  ${\mathcal N}$ and~$C^{q}({\mathcal K},{\mathcal N})$.

In particularly, we have from complex~\eqref{van-est} the definitions of smooth $1$-cocycles and $2$-cocycles, and also the definition of the second smooth cohomology group, which we already  used and  defined  in Section~\ref{cup-sect}.

\medskip

Consider an open cover $\{ U_{\alpha} \}$ of $Y$. Fix a \v{C}ech $1$-cocycle $\{ \psi_{\alpha \beta} \}$ of the sheaf of smooth $\mathcal K$-valued functions on $Y$, i.e. $\psi_{\alpha \beta}$
is a smooth function from $U_{\alpha} \cap U_{\beta}$ to $\mathcal K$ for any $\alpha, \beta$, and $\psi_{\alpha \beta} \psi_{\beta \gamma} = \psi_{\alpha \gamma}  $ on $U_{\alpha} \cap U_{\beta} \cap U_{\gamma}$ for any $\alpha, \beta, \gamma$.

The following lemma easily follows by direct calculation.
\begin{lemma}  \label{lemma-last}
Any \v{C}ech $1$-cocycle $\{ \psi_{\alpha \beta } \}$ of the sheaf of smooth  $\mathcal K$-valued functions on $Y$ for an open cover $\{ U_{\alpha} \}$ of $Y$
defines the cochain map from complex~\eqref{van-est} when ${\mathcal N} = \C^*$ to the \v{C}ech complex  of the sheaf $\oo_Y^*$ on $Y$ for an open cover $\{ U_{\alpha} \}$ of $Y$ by a rule
$$
c \longmapsto c (\psi_{{\alpha}_0 {\alpha}_1} |_{U_{{\alpha}_0 \ldots {\alpha}_k}}  , \psi_{{\alpha}_1 {\alpha}_2}  |_{U_{{\alpha}_0 \ldots {\alpha}_k}}  , \ldots , \psi_{{\alpha}_{k-1} {\alpha}_k} |_{U_{{\alpha}_0 \ldots {\alpha}_k}} ) \, \in \, \oo_Y^*(U_{{\alpha}_0 \ldots {\alpha}_k}) \,  \mbox{,}
$$
where an open set $U_{{\alpha}_0 \ldots {\alpha}_k} = U_{{\alpha}_0} \cap \ldots \cap U_{{\alpha}_k}$, elements
$c \in C^k({\mathcal K}, \C^*)$,  $k \ge 1$. For $k=0$ we consider the obvious identity map.
\end{lemma}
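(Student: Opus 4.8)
The plan is to prove Lemma~\ref{lemma-last} by a direct verification that the assignment described there, call it $\Phi$, is a morphism of complexes: if $\check{\delta}$ denotes the \v{C}ech differential of the sheaf $\oo_Y^*$ for the cover $\{U_{\alpha}\}$, one has to check $\Phi \circ \delta_k = \check{\delta} \circ \Phi$ for every $k \ge 0$ (together with the obvious statement that $\Phi$ does land in \v{C}ech cochains of $\oo_Y^*$, which is clear since $c$ is a smooth map). First I would fix indices $\alpha_0, \ldots, \alpha_{k+1}$ and a cochain $c \in C^k(\mathcal{K}, \C^*)$, and compute $\Phi(\delta_k c)$ on the open set $U_{\alpha_0 \ldots \alpha_{k+1}}$ by substituting $g_j = \psi_{\alpha_{j-1}\alpha_j}$, $1 \le j \le k+1$, into the explicit formula for $\delta_k c$ recalled in the text.

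The heart of the matter is the interior factors. By the \v{C}ech cocycle identity $\psi_{\alpha_{i-1}\alpha_i}\,\psi_{\alpha_i\alpha_{i+1}} = \psi_{\alpha_{i-1}\alpha_{i+1}}$, valid on $U_{\alpha_{i-1}} \cap U_{\alpha_i} \cap U_{\alpha_{i+1}} \supseteq U_{\alpha_0 \ldots \alpha_{k+1}}$, the factor $c(g_1, \ldots, g_i g_{i+1}, \ldots, g_{k+1})$ becomes $c(\psi_{\alpha_0\alpha_1}, \ldots, \psi_{\alpha_{i-2}\alpha_{i-1}}, \psi_{\alpha_{i-1}\alpha_{i+1}}, \psi_{\alpha_{i+1}\alpha_{i+2}}, \ldots, \psi_{\alpha_k\alpha_{k+1}})$, which is precisely $\Phi(c)$ evaluated on the $(k+1)$-tuple of indices obtained by deleting $\alpha_i$. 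Since $\C^*$ carries the trivial $\mathcal{K}$-module structure, the leading factor $g_1 \cdot c(g_2, \ldots, g_{k+1})$ equals $c(\psi_{\alpha_1\alpha_2}, \ldots, \psi_{\alpha_k\alpha_{k+1}})$, i.e.\ $\Phi(c)$ with the index $\alpha_0$ deleted, and the trailing factor $c(g_1, \ldots, g_k)^{(-1)^{k+1}}$ is $\Phi(c)$ with $\alpha_{k+1}$ deleted, raised to the power $(-1)^{k+1}$. Matching the signs $(-1)^i$ then gives $\Phi(\delta_k c)_{\alpha_0 \ldots \alpha_{k+1}} = \prod_{i=0}^{k+1} \Phi(c)_{\alpha_0 \ldots \widehat{\alpha_i} \ldots \alpha_{k+1}}^{(-1)^i}$ on $U_{\alpha_0 \ldots \alpha_{k+1}}$, which is exactly $(\check{\delta}\,\Phi(c))_{\alpha_0 \ldots \alpha_{k+1}}$.

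Finally I would treat the low-degree cases $k = 0$ and $k = 1$ separately, to confirm that the normalizations in complex~\eqref{van-est} and in the \v{C}ech complex agree: for $k = 0$ the map is the obvious identification $\C^* \hookrightarrow \oo_Y^*(U_\alpha)$ and one checks $\delta_0$ maps to the \v{C}ech $0$-coboundary, while for $k = 1$ the computation above uses the cocycle relation exactly once. There is no genuine obstacle in the argument; the only points requiring care are the bookkeeping ones, namely (i) keeping the sign convention $(-1)^i$ aligned between the group-cohomology differential $\delta_k$ and the \v{C}ech differential, and (ii) invoking the cocycle identity only on intersections that contain $U_{\alpha_0 \ldots \alpha_{k+1}}$, where all restrictions are defined. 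Hence $\Phi$ is a cochain map, which is the assertion of the lemma.
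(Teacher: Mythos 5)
Your verification is correct and is exactly the ``direct calculation'' that the paper invokes without writing out: substituting $g_j=\psi_{\alpha_{j-1}\alpha_j}$ into $\delta_k c$, using the cocycle identity $\psi_{\alpha_{i-1}\alpha_i}\psi_{\alpha_i\alpha_{i+1}}=\psi_{\alpha_{i-1}\alpha_{i+1}}$ for the interior factors and the triviality of the $\mathcal K$-action on $\C^*$ for the leading factor, so that $\Phi(\delta_k c)$ matches the \v{C}ech coboundary face by face with the signs $(-1)^i$ aligned. No gap; your appeal to the trivial $\C^*$-module structure is consistent with the paper's setting (cf.\ Remark~\ref{Rem-coh}), and the low-degree checks are as you describe.
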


\medskip

Now we say some words about $\oo_Y^*$-gerbes on $Y$.

By definition, $\oo_Y^*$-gerbe on $Y$
is a sheaf of $\oo_Y^*$-groupoids on $Y$  that satisfies some properties, see, e.g.,~\cite[\S~5.2]{Bry}
(more exactly, an  $\oo_Y^*$-gerbe is a locally connected sheaf of $\oo_Y^*$-groupoids with the descent properties).
 The set formed by $\oo_Y^*$-gerbes up to equivalence is
 identified with
 $$\check{H}^2(Y, \oo_Y^*)  \simeq  H^2(Y, \oo_Y^*) \simeq H^3(Y, \dz) \, \mbox{,}$$
 where $\check{H}^2(Y, \oo_Y^*)$ is the second \v{C}ech cohomology group, see~\cite[Theorem~5.2.8]{Bry}.

Let ${\mathcal P}$ be a principal  $\mathcal K$-bundle over $Y$. Consider a central extension of Lie groups (see Definition~\ref{prin-bun})
\begin{equation}  \label{centr-ex}
1 \lrto \C^* \lrto \widetilde{ \mathcal K}  \xlrto{\sigma} {\mathcal K}  \lrto 1   \mbox{.}
\end{equation}
Then there is a canonical $\oo_Y^*$-gerbe on $Y$ (which is also  called { \em the lifting gerbe}) whose class in $\check{H}^2(Y, \oo_Y^*)$ is an obstruction to find a principal $\widetilde{\mathcal K}$-bundle $\widetilde{\mathcal P}$ over $Y$ such that the principal $\mathcal K$-bundles
$\widetilde{\mathcal P} / \C^*$ and $\mathcal P$ are isomorphic, see~\cite[Prop.~5.2.3]{Bry}. In other words, this class  is an obstruction  to lift $\mathcal P$ to a principal $\widetilde{\mathcal K}$-bundle.  More exactly, the corresponding sheaf of $\oo_Y^*$-groupoids (which is the lifting gerbe) consists of local  lifts  on $Y$
of the structure of the principal $\mathcal K$-bundle
 to  a structure of a principal $\widetilde{\mathcal K}$-bundle just described, and locally such a lift always exists, since   ${ \mathcal P}$ is a locally trivial bundle.

The lifting gerbe  can be described by more explicit data.
Consider an open cover $\{ U_{\alpha} \} $ of $Y$ such that the restriction ${\mathcal P} |_{U_{\alpha}}$ is the trivial principal $\mathcal K$-bundle for any $\alpha$.
Let  $\{ \psi_{\alpha \beta } \}$ be a \v{C}ech $1$-cocycle of the sheaf of smooth  $\mathcal K$-valued functions on $Y$ that defines $\mathcal P$ with respect to this cover. Consider the principal $\C^*$-bundle
(or $\oo_{U_{\alpha \beta}}^*$-torsor)
$T_{\alpha \beta} = \sigma^{-1}(\psi_{\alpha \beta })$ on $U_{\alpha \beta}$ for any $\alpha, \beta$. Then the collection $\{ T_{\alpha \beta} \}$ glues the lifting gerbe  on $Y$ from the trivial  $\oo_{U_{\alpha}}^*$-gerbes
on every $U_{\alpha}$.
Besides, for any $\alpha$, $\beta$ and $\gamma$ there is a canonical isomorphism of $\oo_{U_{\alpha \beta \gamma}}^*$-torsors on $U_{\alpha \beta \gamma}$
$$
T_{\alpha \beta} |_{U_{\alpha \beta \gamma}} \otimes_{\oo_{U_{\alpha \beta \gamma}}^*}  T_{\beta \gamma }  |_{U_{\alpha \beta \gamma}} \otimes_{\oo_{U_{\alpha \beta \gamma}}^*}  T_{\gamma \alpha} |_{U_{\alpha \beta \gamma}} \, \simeq  \, \oo_{U_{\alpha \beta \gamma}}^*  \, \mbox{.}
$$
 When every $T_{\alpha \beta}$ has a section $s_{\alpha \beta}$ (this will   always be the case when we consider a good open cover of $Y$ that is   a refinement of the open cover $\{ U_{\alpha} \} $), then
the collection of functions
\begin{equation}  \label{cech-coc}
s_{\alpha \beta} |_{U_{\alpha \beta \gamma}} \otimes s_{\beta \gamma } |_{U_{\alpha \beta \gamma}} \otimes s_{\gamma \alpha} |_{U_{\alpha \beta \gamma}}  \, \in \, \oo_{Y}^*(U_{\alpha \beta \gamma})
\, \mbox{,}\quad \mbox{where} \quad \alpha , \beta , \gamma \quad \mbox{are any,}
\end{equation}
is a \v{C}ech $2$-cocycle that defines the lifting gerbe.

Besides, when there is a smooth section (in general, non-group) $\tau \, : \,  {\mathcal K} \to \widetilde{ \mathcal K}$ of the
map $\sigma$ in the
central extension~\eqref{centr-ex}, then
this section defines the  sections $s_{\alpha \beta}$ of  $T_{\alpha \beta}$ for any $\alpha, \beta$.
On the other hand, the section $\tau$ defines the smooth $2$-cocycle on the group $\mathcal K$ with coefficients in the group $\C^*$  (as in Theorem~\ref{th-3}).
Now it is easy to see that
the image of this smooth group $2$-cocycle under the map in Lemma~\ref{lemma-last}
coincides with
the  \v{C}ech $2$-cocycle~\eqref{cech-coc} for the constructed sections $s_{\alpha \beta}$.

\subsection{Topological Riemann-Roch theorem}
\label{tRR}

Let $Y$ be a smooth (finite-dimensional) manifold. We will use notation from previous Section~\ref{gerbes}.

Let $L$ be a complex line bundle on $Y$.  The isomorphism class of $L$ corresponds to the element from $H^1(Y, \oo_Y^*)$.
Using the exponential sheaf sequence on $Y$ we have a map (which is an isomorphism)  $H^1(Y, \oo_Y^*)  \to H^2(Y, \dz)$. The first Chern class
$$c_1(L)  \in H^2(Y, \dz)$$
of $L$  is the image of the class of  $L$ under this map.

Denote by $\overline{L}$ the  complex conjugate line bundle on $Y$, which is given by the complex conjugate \v{C}ech $1$-cocycle for $L$. From the exponential sheaf sequence on $Y$ it is easy to see that
$c_1(\overline{L}) = - c_1(L)$.

Let $\kappa$ be a real line bundle on $Y$. Denote by $\kappa_{\C}$ the complex line bundle on $Y$ which is the complexification of $\kappa$.  It is clear that $\overline{\kappa_{\C}}  \simeq \kappa_{\C}$.
Since $c_1(\overline{\kappa_{\C}}) = - c_1(\kappa_{\C})$, we have $2 c_1(\kappa_{\C}) =0 $ in $H^2(Y, \dz)$. (This is part of  more general statement that the odd Chern classes of the complexification of a real vector bundle are  elements of order $2$, see~\cite[\S~15]{MS}.)

\bigskip

As in Section~\ref{Gysin}  we fix  a fibration in oriented circles
$\pi : M \to B$. Let $L$ be a complex line bundle on $M$.

Then $M$ is a locally trivial fibration over $B$, and hence $L$ is also a locally trivial fibration over $B$.
Moreover, there is a unique analytic structure on $B$ compatible with the given smooth structure, and the same is true for $L$; in particularly,
$L$ admits a collection of the analytic transition function on $B$, see~\cite[\S~2]{Sh} (after the works of
H.~Whitney, H.~Grauert and C.~B.~Morrey).

This means that $L$ as a locally trivial fibration over $B$  can be given by a collection of transition functions with value in the Lie group $\G =\h^*  \rtimes \DS  $ (recall Definition~\ref{d1}). {\em We will assume this further.}

Construct by $L$ the (right) principal $\G$-bundle  ${\mathcal P}_L$ over $B$ by  changing every fiber $L |_{\pi^{-1}(b)}$ of $L$ over the base $B$ to the space of analytic isomorphisms  from  $\C^* \times S^1$, where ${S^1 = \{z \in \mathbb{C} \, \mid  \,  |z|  =1   \}}$, to $L |_{\pi^{-1}(b)} \setminus \{\mbox{zero section} \}$ such that these isomorphisms maps every fiber to a fiber  in the fibrations
$$\C^* \times S^1  \lrto S^1 \qquad \mbox{and}  \qquad
L |_{\pi^{-1}(b)} \setminus \{\mbox{zero section} \}  \lrto \pi^{-1}(b)$$
 and commute with the action of $\C^*$ on fibers.

 The principal
$\G$-bundle ${\mathcal P}_L$ is given by the same transitions functions  with value in $\G$ as the locally trivial fibration $L$ over the base $B$.

As in Section~\ref{gerbes}, consider the lifting gerbe  of the principal $\G$-bundle ${\mathcal P}_L$ and
the determinant central extension of $\G$ by $\C^*$ (recall the central extension~\eqref{det-centr}). We will call this gerbe on $B$ as {\em the determinant gerbe} of $L$ and denote it by ${\mathcal Det}(L)$.  Denote by $[{\mathcal Det}(L)]$  the class of the determinant gerbe ${\mathcal Det}(L)$ in $H^3(B, \dz)$.

\medskip

Now we have the topological Riemann-Roch theorem.
\begin{Th}  \label{th-last}
Let $\pi : M \to B$ be  a fibration in oriented circles and $L$ be a complex line bundle on $M$. In the group $H^3(B, \dz)$ we have
$$
12 \, [{\mathcal Det}(L)] = 6  \, \pi_* (c_1(L) \cup c_1(L))  \, \mbox{.}
$$
\end{Th}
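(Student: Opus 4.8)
The plan is to reduce the statement, via \v{C}ech--group cohomology comparison, to the equivalence of central extensions proved in Theorem~\ref{Th-eq}, and then to translate the three explicit $2$-cocycles on $\g$ into cohomology classes on $B$ using the Gysin-map identity of Theorem~\ref{th-9}. First I would fix a good open cover $\{U_\alpha\}$ of $B$ over which $L$ (as a fibration over $B$) is trivial, so that $L$ is described by a \v{C}ech $1$-cocycle $\{\psi_{\alpha\beta}\}$ with values in the Lie group $\g = \h^*\rtimes\DS$; this is exactly the collection of transition functions defining the principal $\g$-bundle $\mathcal P_L$. By the discussion at the end of Section~\ref{gerbes}, a smooth group $2$-cocycle $E$ on $\g$ with values in $\C^*$, together with its associated smooth section, produces a \v{C}ech $2$-cocycle $E(\psi_{\alpha_0\alpha_1},\psi_{\alpha_1\alpha_2})$ representing the class in $\check H^2(B,\oo_B^*)\simeq H^3(B,\dz)$ of the lifting gerbe attached to the central extension of $\g$ by $E$. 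In particular the determinant $2$-cocycle (restricted appropriately, using Remark~\ref{sect-g} for the smooth section over all of $\g$, and Theorem~\ref{th-3} for the explicit formula $D$ over $\g^0$) gives the \v{C}ech representative of $[{\mathcal Det}(L)]$, while the $2$-cocycles $\langle\Lambda,\Lambda\rangle$, $\langle\Lambda,\Omega\rangle$, $\langle\Omega,\Omega\rangle$ give the \v{C}ech representatives of the gerbes they define.

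Next I would invoke Theorem~\ref{Th-eq}: in $H^2_{\rm sm}(\g,\C^*)$ one has $D^{12}$ equivalent to $\langle\Lambda,\Lambda\rangle^6\cdot\langle\Lambda,\Omega\rangle^{-6}\cdot\langle\Omega,\Omega\rangle$. Since the map of Lemma~\ref{lemma-last} from smooth group cohomology of $\g$ to \v{C}ech cohomology of $\oo_B^*$ is a homomorphism and kills coboundaries, applying it to $\{\psi_{\alpha\beta}\}$ yields the equality in $H^3(B,\dz)$
\begin{equation}  \label{plan-eq1}
12\,[{\mathcal Det}(L)] = 6\,c(\Lambda,\Lambda) - 6\,c(\Lambda,\Omega) + c(\Omega,\Omega)  \, \mbox{,}
\end{equation}
where $c(\lambda_1,\lambda_2)\in H^3(B,\dz)$ denotes the class obtained from the \v{C}ech $2$-cocycle built from $\langle\lambda_1,\lambda_2\rangle = \mathbb T\circ(\lambda_1\cup\lambda_2)$. (I should be slightly careful that Theorem~\ref{Th-eq} is stated over $\g$ after reduction to $\g^0$; since the cover and cocycle live over $B$, and $\g$ itself is the structure group, this is exactly the setting where Theorem~\ref{Th-eq} applies, with the coboundary ambiguity of Remark~\ref{coc-same} disappearing upon passing to cohomology classes.)

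Then comes the identification of the three classes $c(\lambda_i,\lambda_j)$ with Gysin-pushforward expressions. The key observation is that the $1$-cocycle $\Lambda$ on $\g$ evaluated on the transition functions $\{\psi_{\alpha\beta}\}$ recovers precisely the $\h^*$-part of the transition functions, i.e.\ the transition functions of $L$ as a genuine line bundle on $M$ (restricted to the fibered structure), so that the \v{C}ech $1$-cocycle $\{\Lambda(\psi_{\alpha\beta})\}$ represents the class of $L$ in $H^1(M,\oo_M^*)$ under the isomorphism $H^1(M,\oo_M^*)\simeq H^1(B,\pi_*\oo_M^*)$ of~\eqref{spec-iso}. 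Feeding this into Theorem~\ref{th-9}: the composition of maps~\eqref{second-sec} applied to the pair $(\text{class of }L,\text{class of }L)$ equals the composition~\eqref{first-seq}, whose output is $\pi_*(c_1(L)\cup c_1(L))\in H^3(B,\dz)$; and by construction of $\langle\Lambda,\Lambda\rangle$ and Lemma~\ref{lemma-last} the left-hand side of that identity is exactly $c(\Lambda,\Lambda)$ (up to the sign bookkeeping $(-1)^{m+1}$ with $l=m=1$, which gives a sign that must be checked to be $+1$ or absorbed). Hence $c(\Lambda,\Lambda) = \pi_*(c_1(L)\cup c_1(L))$. For the remaining two terms I would argue they vanish in $H^3(B,\dz)$: the $1$-cocycle $\Omega$ takes values $(f^{\circ -1})'$, which on each fiber $S^1$ is the derivative of a diffeomorphism and hence (by Proposition~\ref{wind-diff}) has winding number zero and in fact is of the form $\exp(h)$ with $h\in\h$ globally on the relevant curve, so the corresponding \v{C}ech $1$-cocycle of $\pi_*\oo_M^*$ lifts to a cocycle of $\pi_*\oo_M$ (the additive sheaf). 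Since $\pi_*\oo_M$ is a fine/soft sheaf, its higher cohomology vanishes, so the class built from $\Omega$ in $H^1(B,\pi_*\oo_M^*)$ maps to zero in $H^2(B,\dz)$, and therefore any $\cup$-product involving it dies after applying the maps in~\eqref{second-sec}; thus $c(\Lambda,\Omega)=c(\Omega,\Omega)=0$. Substituting into~\eqref{plan-eq1} gives $12\,[{\mathcal Det}(L)] = 6\,\pi_*(c_1(L)\cup c_1(L))$.

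The main obstacle I anticipate is the bookkeeping of compatibilities and signs: making precise that the map of Lemma~\ref{lemma-last} intertwines the group-cohomology $\cup$-product $\langle\Lambda,\Lambda\rangle = \mathbb T\circ(\Lambda\cup\Lambda)$ with the sheaf-level composition~\eqref{second-sec} (so that Theorem~\ref{th-9} can be applied), including the sign $(-1)^{m+1}$ and the fact that $\mathbb T$ is antisymmetric while $c_1(L)\cup c_1(L)$ is symmetric on an even-degree class; and checking that the section of the determinant central extension over all of $\g$ (Remark~\ref{sect-g}, extending the section giving $D$ over $\g^0$) yields a \v{C}ech representative that genuinely computes $[{\mathcal Det}(L)]$ rather than some twist. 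The vanishing argument for the $\Omega$-terms also needs the mild check that $(f^{\circ -1})'$, pulled back along the transition data, really does lift to the additive sheaf $\pi_*\oo_M$ compatibly over triple overlaps — this uses Proposition~\ref{wind-diff} together with the local existence of logarithms, and should go through without difficulty once the cover is chosen fine enough.
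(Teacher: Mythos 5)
Your first half follows the paper's own route essentially verbatim: trivialize $L$ over a cover of $B$ to get the $\g$-valued \v{C}ech $1$-cocycle $\{\psi_{\alpha\beta}\}$, apply Lemma~\ref{lemma-last} and Theorem~\ref{Th-eq} to get the identity
$12\,[{\mathcal Det}(L)]=6\,c(\Lambda,\Lambda)-6\,c(\Lambda,\Omega)+c(\Omega,\Omega)$ in $H^3(B,\dz)$, and identify $c(\Lambda,\Lambda)=\pi_*(c_1(L)\cup c_1(L))$ via $Q_\Lambda\simeq L$ and Theorem~\ref{th-9}. The genuine gap is in your disposal of the two terms involving $\Omega$. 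You claim that, because each $(f^{\circ-1})'$ has winding number zero (Proposition~\ref{wind-diff}), the \v{C}ech $1$-cocycle $\{\widetilde{\Phi}_\alpha^*(\Omega\circ\psi_{\alpha\beta})\}$ ``lifts to a cocycle of $\pi_*\oo_M$''. Winding number zero only gives a logarithm of each individual transition function on each overlap; the chosen logarithms need not satisfy the \v{C}ech cocycle condition, their coboundary being a $2\pi i\dz$-valued $2$-cocycle, and softness of $\pi_*\oo_M$ says nothing about whether that obstruction vanishes. If the lift really were a cocycle of the soft sheaf $\pi_*\oo_M$, you would in effect have shown that the class of $Q_\Omega\simeq\Omega_{M/B}$ in $H^1(B,\pi_*\oo_M^*)\simeq H^1(M,\oo_M^*)$ is trivial --- a much stronger statement than anything you have justified. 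Moreover, even granting some vanishing ``in $H^2(B,\dz)$'', it does not follow formally that the $\mathbb T$-twisted cup products in~\eqref{second-sec} die: vanishing of (an image of) one factor in a different cohomology group is not a substitute for an argument at the level of $H^2(B,\pi_*\oo_M^*\otimes_{\dz}\pi_*\oo_M^*)\to H^2(B,\oo_B^*)$.

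For comparison, the paper handles these two terms differently and more carefully. It does \emph{not} claim $\pi_*(c_1(L)\cup c_1(\Omega_{M/B}))=0$: it only uses that $c_1(\Omega_{M/B})$ is $2$-torsion (being the Chern class of the complexification of a real line bundle), so the even coefficient $6$ kills that term. For $\pi_*(c_1(\Omega_{M/B})\cup c_1(\Omega_{M/B}))$ it restricts to the winding-number-zero subsheaf $\pi_{*,0}\oo_M^*$ and observes that on it the sheaf map $\mathbb T$ factors as $\exp\circ\widetilde{\mathbb T}$ through the additive soft sheaf $\oo_B$ (formula~\eqref{wT}), so the relevant map on $H^2$ is trivial because $H^2(B,\oo_B)=0$; Theorem~\ref{th-9} then converts this into the vanishing of the pushed-forward cup product. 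If you want outright vanishing of both $\Omega$-terms, you would need an honest proof that $c_1(\Omega_{M/B})=0$ in $H^2(M,\dz)$ (for instance, that the vertical tangent bundle of a fibration in oriented circles is an orientable, hence trivial, real line bundle) combined with Theorem~\ref{th-9}; but that is not the argument you gave, and as written your vanishing step does not stand.
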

\begin{proof}
Define the complex line bundle $\Omega_{M/B} = (T^*M / \pi^*(T^* B))_{\C} $ on $M$ which is the complexification of the real line bundle $T^*M / \pi^*(T^* B)$,
where $T^*M$ and $T^*B$ are cotangent vector bundles on $M$ and $B$ correspondingly. The sheaf of sections of  $\Omega_{M/B}$ is the sheaf of relative differential $1$-forms with complex coefficients.

First we prove the following equality in $H^3(B, \dz)$:
\begin{equation}  \label{top-1}
12 \, [{\mathcal Det}(L)] = 6  \, \pi_* (c_1(L) \cup c_1(L)) - 6 \, \pi_* (c_1(L) \cup c_1(\Omega_{M/B})) + \pi_* (c_1(\Omega_{M/B}) \cup c_1(\Omega_{M/B}))  \, \mbox{.}
\end{equation}

 Consider   an open cover $\{ U_{\alpha} \}$ of $B$ such that the principal $\g$-bundle ${\mathcal P}_{L}$ is given by a \v{C}ech   $1$-cocycle $\{ \psi_{\alpha \beta} \}$ with respect to this cover.
 Fix isomorphisms
 $$\Phi_{\alpha} \, : \, L |_{\pi^{-1}(U_{\alpha)}}  \lrto \C \times S^1 \times U_{\alpha}$$
 which lead to this \v{C}ech $1$-cocycle. These isomorphisms induce the following isomorphisms
 $$
 \widetilde{\Phi}_{\alpha} \, : \,  \pi^{-1}(U_{\alpha})  \lrto  S^1 \times U_{\alpha}  \, \mbox{,}  \qquad  \widetilde{\Phi}_{\alpha}^*   \, : \, \oo_{S^1 \times U_{\alpha}}  \lrto \oo_{ \pi^{-1}(U_{\alpha})}  \, \mbox{.}
 $$

 Let $\lambda : \G \to \h^*$ be a smooth group $1$-cocycle (see Sections~\ref{cup-sect} and~\ref{gerbes}). Then $\left\{ \widetilde{\Phi}_{\alpha}^* \left( \lambda \circ \psi_{\alpha \beta} \right)   \right\}$
is a \v{C}ech $1$-cocycle for the sheaf $\pi_* \oo_M^*$ and the open cover $\{ U_{\alpha} \}$ of $B$,
 where $\lambda \circ \psi_{\alpha \beta}$ is the composition of maps $\lambda$ and $\psi_{\alpha \beta}$.
 Since $H^1(B, \pi_* \oo_M^*) = H^1(M, \oo_M^*)$, this $1$-cocycle
 $\left\{ \widetilde{\Phi}_{\alpha}^* \left( \lambda \circ \psi_{\alpha \beta} \right)   \right\}$
  corresponds
to a certain complex line bundle $Q_{\lambda}$ on $M$.

 Recall that  the $\cup$-product in \v{C}ech cohomology
$$
\check{H}^1(\{ U_{\alpha} \} , \pi_* \oo_M^*) \, \cup \, \check{H}^1( \{ U_{\alpha} \} , \pi_* \oo_M^*)  \lrto \check{H}^2( \{ U_{\alpha} \}, \pi_* \oo_M^*  \otimes_{\dz} \pi_* \oo_M^*)
$$
can be given on \v{C}ech $1$-cocycles $\{ \rho_{\alpha \beta}   \}$ and $\{ \mu_{\alpha \beta}    \}$  in the following way:
$$
\{ \rho_{\alpha \beta}    \}    \, \cup \, \{ \mu_{\alpha \beta}    \}  \longmapsto  \{  \chi_{\alpha \beta \gamma}  \}  \, \mbox{,} \quad  \mbox{where} \quad
\chi_{\alpha \beta \gamma} = \rho_{\alpha \beta} |_{U_{\alpha \beta \gamma}}  \otimes \mu_{\beta  \gamma} |_{U_{\alpha \beta \gamma}} $$
 for any $\alpha$, $\beta$ and $\gamma$  (here $\chi_{\alpha \beta \gamma}  \in H^0(U_{\alpha \beta \gamma},  \pi_* \oo_M^*  |_{U_{\alpha \beta \gamma}}  \otimes_{\dz} \pi_* \oo_M^*  |_{U_{\alpha \beta \gamma}} )$).

Let $\lambda_i : \G \to \h^*$, where $i=1$ and $i=2$, be two   smooth group $1$-cocycles.
Recall that in Definition~\ref{bracket} we defined the group $2$-cocycle
 $\langle \lambda_1, \lambda_2   \rangle $ on the group $\G$ with coefficients in the group $\C^*$.

 Now, using Theorem~\ref{th-9}, it is easy to see that  the  element $\pi_* (c_1(Q_{\lambda_1}) \cup c_1(Q_{\lambda_2}) )$ in the group $H^3(B, \dz)$ coincides with the image of the group $2$-cocycle
 $\langle \lambda_1, \lambda_2   \rangle $ in ${H^2(B, \oo_B^*) \simeq H^3(B, \dz)}$ under the map in Lemma~\ref{lemma-last}.

In Section~\ref{cup-sect} we considered the special smooth  $1$-cocycles  on the group $\G$ with coefficients in the group $\h^*$:  $\Lambda$  and $\Omega$.

 Note that if $\lambda = \Lambda$, then $Q_{\lambda} \simeq L$.  If $\lambda = \Omega$, then $Q_{\lambda} \simeq \Omega_{M/B}$, besides  $d \left(\widetilde{\Phi}_{\alpha}^*(z) \right)$ is a local section of the last line bundle.

 From the reasoning at the end of Section~\ref{gerbes}  it follows that the element $[{\mathcal Det}(L)]$ in $H^3(B, \dz)$ coincides with the image in $H^2(B, \oo_B^*) \simeq H^3(B, \dz)$ under the map in Lemma~\ref{lemma-last}  of a group $2$-cocycle that defines the determinant central extension  (see Remark~\ref{sect-g}).

 Now   equality~\eqref{top-1} follows from Theorem~\ref{Th-eq}  and Lemma~\ref{lemma-last}.

Now we prove the statement of the theorem.

Note that $2 \, \pi_* (c_1(L) \cup c_1(\Omega_{M/B}))= 0$, since $c_1(\Omega_{M/B})$ is an element of order~$2$.

We will prove that
\begin{equation}  \label{fin-form}
\pi_* (c_1(\Omega_{M/B}) \cup c_1(\Omega_{M/B})) = 0  \, \mbox{.}
\end{equation}

Denote by $ \mathop{\pi_{*, 0}}  \oo_{M}^*  \subset  \pi_* \oo_M^*$ the subsheaf of those functions from the sheaf $\pi_* \oo_M^*$ that after the restriction to every fiber of the map $\pi$ have the winding number $\nu$ equal to $0$.
By Proposition~\ref{wind-diff}, the class of $\Omega_{M/B}$ in $H^1(B, \pi_* \oo_M^*)$
comes from the element in $H^1(B, \mathop{\pi_{*, 0}} \oo_{M}^* )$  under the sheaf  embedding $ \mathop{\pi_{*, 0}} \oo_{M, 0}^*  \subset \pi_* \oo_M^*$.

Now by Theorem~\ref{th-9}, it is enough to prove that the map of sheaves
$$ \pi_* \oo_M^*  \otimes_{\dz} \pi_* \oo_M^*  \xlrto{\mathbb T} \oo_B^*  $$
restricted to the subsheaf $\mathop{\pi_{*, 0}} \oo_{M}^*  \otimes_{\dz} \mathop{\pi_{*, 0}} \oo_{M}^*$
induces the trivial homomorphism from $H^2(B, \mathop{\pi_{*, 0}} \oo_{M}^*  \otimes_{\dz} \mathop{\pi_{*, 0}} \oo_{M}^*)$ to $H^2(B, \oo_B^*)$.

From formula~\eqref{expli} for the map $\mathbb T$
it follows  that the map
$$\mathop{\pi_{*, 0}} \oo_{M}^*  \otimes_{\dz} \mathop{\pi_{*, 0}} \oo_{M}^*  \xlrto{\mathbb T} \oo_B^*$$
is the composition of the following maps:
$$
\mathop{\pi_{*, 0}} \oo_{M}^*  \otimes_{\dz} \mathop{\pi_{*, 0}} \oo_{M}^*  \xlrto{\widetilde{\mathbb T}} \oo_B  \xlrto{\exp} \oo_B^*  \,  \mbox{,}
$$
where the map $\widetilde{\mathbb T}$ is induced by the map (denoted by the same letter) $\widetilde{\mathbb T}$ on   every fiber $S^1$ of the map $\pi$:
\begin{equation}  \label{wT}
\widetilde{\mathbb T}(f,g)=  \frac{1}{2 \pi i} \oint_{S^1}  \log f \,  d \log g   \, \mbox{,}
\end{equation}
where the  functions $f$ and  $g$ are
from the group $C^{\infty}(S^1, \C^*)$, and $\nu(f)= \nu(g)=0$.
The map~\eqref{wT} is well-defined, it does not depend on the choice of the branches $\log f$ and~$\log g$.

Now we use that $H^2(B, \oo_B) =0$, and thus we proved  formula~\eqref{fin-form}.

\end{proof}

\begin{nt} \em
In the group $H^3(B, \C)$ the equality $2  [{\mathcal Det}(L)] =    \pi_* (c_1(L) \cup c_1(L))$ (see notation in Theorem~\ref{th-last}) was proved by another methods in~\cite{BKTV}.
\end{nt}

\vspace{0.3cm}

\noindent Steklov Mathematical Institute of Russsian Academy of Sciences, 8 Gubkina St., Moscow 119991, Russia,
 {\em and}

 \noindent National Research University Higher School of Economics, Laboratory of Mirror Symmetry,  6 Usacheva str., Moscow 119048, Russia,
{\em and}

\noindent National University of Science and Technology ``MISiS'',  Leninsky Prospekt 4, Moscow  119049, Russia

\noindent {\it E-mail:}  ${d}_{-} osipov@mi{-}ras.ru$

\end{document}